\newcommand{\crr}{\color{red}}
\newcommand{\cb}{\color{blue}}
\long\def\hide#1{}
\newcommand{\dif}{\,\mathrm{d}}
\newcommand{\inn}{{\quad\hbox{in } }}
\newcommand{\ttt}{\tilde }
\newcommand{\nn}{ {\nabla}  }
\newcommand{\pp}{ {\partial} }
\newcommand{\vp}{\varphi}
\newcommand{\B}{\beta }
\newcommand{\N}{\mathbb{N}}
\newcommand{\R} {\mathbb R}
\newcommand{\cuad}{{\sqcap\kern-.68em\sqcup}}
\renewcommand{\a}{{\alpha}}
\newcommand{\foral}{\quad\mbox{for all}\quad}
\newcommand{\ve}{\varepsilon}
\newcommand{\be}{\begin{equation}}
\newcommand{\ee}{\end{equation}}
\newcommand{\equ}[1]{(\ref{#1})}
\newtheorem{definition}{Definition}
\newtheorem{lemma}{Lemma}[section]
\newtheorem{proposition}{Proposition}[section]
\newtheorem{theorem}{Theorem}
\newtheorem{remark}{Remark}[section]
\newcommand{\bremark}{\begin{remark} \em}
\newcommand{\eremark}{\end{remark} }
\numberwithin{equation}{section}
\numberwithin{equation}{section}
\def \d{\delta}
\def\R{\mathbb{R}}
\def \p{\partial}
\def \s{\sigma}
\def \t {\theta}
\def \e {\varepsilon}
\def \a {\alpha}
\def \RE {\mathop{\text{Re}}}
\def \IM {\mathop{\text{Im}}}
\def \L {\mathcal{L}}
\def \B {\mathcal{B}}
\def \d {\tilde{d}}
\def \r {w}
\def \v {\theta}
\def \l {\rho}
\def \d {\hat{d}}
\def \dd {\tilde{d}}
\def \foral { \text{ for all } }
\def \N {\mathcal{N}}
\def \z {\tilde{z}}
\def\@seccntformat#1{\@ifundefined{#1@cntformat}%
   {\csname the#1\endcsname\quad}
   {\csname #1@cntformat\endcsname}
}
\author{ Juan D\'avila, Manuel del Pino, Maria Medina and R\'emy Rodiac}
\title[ Interacting  helical Ginzburg-Landau filaments]{Interacting helical vortex filaments in the 3-dimensional Ginzburg-Landau equation}
\date{}
\address[Juan D\'avila]{ Departamento de Ingenier\'ia Matem\'atica-CMM Universidad de Chile, Santiago 837-
0456, Chile}
\email{jdavila@dim.uchile.cl}
\address[Manuel del Pino]{Department of Mathematical Sciences University of Bath, Bat
h BA2 7AY, United
Kingdom, Departamento de Ingenier\'ia Matem\'atica-CMM Universidad de Chile, Santiago 837-
0456, Chile, Chile}
\email{m.delpino@bath.ac.uk}
\address[Mar\'ia Medina]{Universidad de Granada, Departamento de An\'alisis Matem\'atico, Campus Fuentenueva, 18071 Granada, Spain}
\email{mamedina@ugr.es}
\address[R\'emy Rodiac]{
Universit\'e catholique de Louvain, Institut de Recherche en Math\'ematique et Physique, Chemin du Cyclotron 2 bte L7.01.02, 1348 Louvain-la-Neuve, Belgium}
\email{remy.rodiac@uclouvain.be}
\begin{document}

\begin{abstract}
For each given $n\ge 2$,
we construct a family of entire solutions $u_\ve (z,t)$, $\ve>0$,  with helical symmetry to the 3-dimensional complex-valued Ginzburg-Landau equation
\begin{equation*}\nonumber
\Delta u+(1-|u|^2)u=0, \quad   (z,t) \in \R^2\times \R \simeq \R^3.
\end{equation*}
These solutions are $2\pi/\ve$-periodic in $t$ and have $n$ helix-vortex curves, with asymptotic behavior as $\ve\to 0$
$$
u_\ve (z,t) \approx  \prod_{j=1}^n  W\left( z-   \ve^{-1} f_j(\ve t) \right),
$$
where $W(z) =w(r) e^{i\theta} $, $z= re^{i\theta},$ is the standard degree $+1$ vortex solution of the planar Ginzburg-Landau equation
$  \Delta W+(1-|W|^2)W=0 \text{ in } \R^2 $ and
$$ f_j(t)  =   \frac {  \sqrt{n-1} e^{it}e^{2 i (j-1)\pi/ n   }}{  \sqrt{|\log\ve|}}, \quad j=1,\ldots, n. $$
Existence of these solutions was previously conjectured in \cite{delPinoKowalczyk2008}, being  ${\bf f}(t) = (f_1(t),\ldots, f_n(t))$ a rotating equilibrium point for
the renormalized energy of vortex filaments there derived,
$$
\mathcal W_\ve ( {\bf f} )  :=\pi \int_0^{2\pi}  \Big ( \, \frac{|\log \e|} 2 \sum_{k=1}^n|f'_k(t)|^2-\sum_{j\neq k}\log |f_j(t)-f_k(t)| \, \Big ) \dif t,
$$
corresponding to that of a planar logarithmic $n$-body problem.  These solutions satisfy
$$
\lim_{|z| \to +\infty }  |u_\ve (z,t)| = 1 \quad \hbox{uniformly in $t$}
$$
and have nontrivial dependence on $t$, thus negatively answering the Ginzburg-Landau analogue of the Gibbons conjecture for the Allen-Cahn equation,
a question originally formulated by H. Brezis.

\end{abstract}

\maketitle
\section{Introduction}

This paper deals with constructing entire solutions to the  complex Ginzburg-Landau equation in the Euclidean space $\R^N$
\begin{equation}\label{eq:GLequation}
\Delta u+(1-|u|^2)u=0\quad \text{ in } \R^N,
\end{equation}
where  $u:\R^N \rightarrow \mathbb{C}$ is a complex-valued function and $N\ge 2$.
It is convenient for our purposes to introduce a small parameter $\ve>0$ and consider the equivalent scaled version of \equ{eq:GLequation}
given by
\begin{equation}\label{eq:GLequationeps}
\e^2\Delta u +(1-|u|^2)u=0  \quad \text{ in } \R^N.
\end{equation}
When regarded in a bounded region $\Omega\subset \R^N$, equation \equ{eq:GLequationeps}
corresponds to the Euler-Lagrange equation for the functional
\be\label{Jve}
J_\ve(u) = \frac 12  \int_\Omega |\nn u|^2    + \frac 1{4\ve^2} \int_\Omega (1-|u|^2)^2,
\ee
which for $N=2,3$
is often considered as a model for the energy arising in the standard
Ginzburg-Landau theory of superconductivity when no external applied magnetic field is
present.  In that setting, the complex-valued state of the system $u$ corresponds to a critical
point of $J_\ve$ in which $|u|^2$ represents the density of the super-conductive property of the sample $\Omega$
(Cooper pairs of electrons). The function $u$ is expected to stay away from zero except on a lower-dimensional zero set,
the vortex set, corresponding to {\em defects} where
superconductivity is not present.

\medskip
In their pioneering work \cite{BethuelBrezisHelein1994}, Bethuel-Brezis-H\'elein  analyzed in dimension $N=2$ the behavior as $\ve\to 0$  of a global minimizer $u_\ve$ of $J_\ve$ when subject to a boundary condition $g: \pp\Omega \to \mathbb{S}^1$ of degree $k\ge 1$. They established that away from a finite number of distinct points $a_1,\ldots, a_k\in \Omega$ one has (up to subsequences)
$$
u_\ve (x) \approx   e^{i\vp(x) }\prod_{j=1}^k \frac {x-a_j}{|x-a_j|},
$$
where $\vp(x) $ is a real harmonic function and the $k$-tuple $(a_1,\ldots, a_k)$ minimizes a functional of points, the {\em renormalized energy} that measures through Green's function the mutual interaction between the points and the boundary. Using the results in \cite{Mironescu1996,Sandier1998,Shafrir1994} one gets the validity of the global approximation
\be\label{forma}
u_\ve (x) \approx   e^{i\vp(x) }\prod_{j=1}^k   W\left ( \frac {x-a_j}{\ve}   \right ),
\ee
where $W(z)$ is the {\em standard degree $+1$ vortex solution} of the equation \equ{eq:GLequation} for $N=2$,
namely its unique solution
of the form  \be\label{v1} W(z) = e^{i\theta} w(r), \quad z=re^{i\theta},\ee where $w>0$ solves
\begin{equation}\label{eq:equation_modulus_w}  \left\{  \begin{aligned}
&w'' +  \frac {w'}r  - \frac w{r^2} +  (1-w^2) w  =  0 \inn (0,\infty),\\
& w(0^+) = 0 , \quad w(+\infty) = 1 ,
\end{aligned} \right. 
\end{equation} see \cite{ChenElliottQi1994,HerveHerve1994}. Thus before reaching the limit, the vortex set of $u_\ve$ is constituted by exactly $k$ distinct points, each with local degree $+1$. The mechanism of vortex formation in two-dimensional Ginzburg-Landau model from the action of an external constant magnetic field has been extensively studied, see \cite{Sandier_Serfaty2007} and references therein.
Critical points of the renormalized energy are in fact in correspondence with other critical points of $J_\ve$ in \equ{Jve}  of the form \equ{forma}
for small $\e$, as it has been found in  \cite{Lin1995,delPinoFelmer1997,almeida,PacardRiviere2000,delPinoKowalczykMusso2006}.
In the higher dimensional case $N\ge 3$ and with suitable  boundary conditions and energy levels, the vortex set of minimizers and more general critical points
have been described when $\ve\to 0$ in \cite{Riviere1996,LinRiviere1999,Sandier2001,BethuelBrezisOrlandi,LinRiviere2001,
jerrardsoner,Alberti-Baldo-Orlandi2005}  as a codimension 2 set with a generalized minimal submanifold structure.  In dimension $N=3$ defects should typically assume the form of curves with a
winding number associated: these are called {\em vortex filaments}. The basic {\em degree $+1$  vortex line} is the solution $u$
of \equ{eq:GLequation} for $N=3$ given by
\be
u(z,t) =  W(z) ,  \quad (z,t)\in \R^2\times \R\simeq \R^3,
\nonumber
\ee
with $W(z)$ specified in \equ{v1}.  Its zero set is of course the $t$-axis, and a transversal winding number $+1$ is associated to it.
In dimension $N=3$, under Neumann boundary conditions, it was found in \cite{Montero_Sternberg_Ziemer2004} a local minimizer with energy formally corresponding
to multiple vortex lines collapsing onto a segment. Motivated by this work, in \cite{delPinoKowalczyk2008} an expression for the renormalized energy for the interaction of nearly parallel ``degree +1 vortex lines'' collapsing onto the $t$-axis was derived. Considering $n$ curves
\begin{align}
\nonumber
 t\mapsto(f_i(t),t), \quad 1\leq i \leq n, \quad {\bf f} =(f_1,\cdots,f_n),
\end{align}
which for simplicity we assume $2\pi$-periodic, we look for an approximate solution
of the form
\be \label{form}
u_\ve (z,t)  \approx   W_\ve(z,t; {\bf f} ):=    e^{i\vp(z,t)} \prod_{j=1}^n  W\left ( \frac{z-   f_j(t)}{\ve}    \right ).
\ee
In the cylinder $\Omega =\mathcal C = B_R(0) \times (0,2\pi)$, with $\vp$ harmonic matching lateral zero Neumann boundary conditions it is found in \cite{delPinoKowalczyk2008} that
\be\label{func}
I_\ve({\bf f} ):=J_\ve (W_\ve (\cdot ; {\bf f} ) ) \approx    2\pi \times n\pi|\log\ve|  +   \mathcal W _\e({\bf f}  )
\ee
where
\begin{equation}\label{We}
\mathcal W _\e(\mathbf{ f}):=\pi \int_0^{2\pi}  \Big ( \, |\log \e| \frac12 \sum_{k=1}^n|f'_k(t)|^2-\sum_{j\neq k}\log |f_j(t)-f_k(t)| \, \Big ) \dif t.
\end{equation}
Equilibrium location of these curves should then correspond to an approximate critical point of  the functional  $I_\ve$ and hence of $\mathcal W _\e$, which
 is the action  associated to the $n$-logarithmic body problem in $\R^2$. This energy also appears in related problems in fluid dynamics, see e.g. \cite{Klein_Majda_Damodaran1995, Kenig_Ponce_Vega2003}.
If we set
 $${\bf f}(t) =\frac{1}{\sqrt{|\log \e| }} \tilde{\bf f}(t), \quad  \tilde{\bf f} =(\tilde{f}_1,\cdots,\tilde{f}_n), $$ this corresponds to a
  $2\pi$-periodic solution of the ODE system
\begin{equation}\label{sist}
-\tilde{f}_k''(t)= 2\sum_{i\neq k}\frac{\tilde{f}_k(t)-\tilde{f}_i(t)}{|\tilde{f}_k(t)-\tilde{f}_i(t) |^2}.
\end{equation}
The following $n$-tuple  $ \ttt {\bf f}^0$  is  a standard rotating solution of system \equ{sist}.
\begin{equation}\label{eq:def_fvortex}
\tilde{f}^0_k(t)= \sqrt{n-1} e^{i t}e^{2i(k-1)\pi/n}, \ \ k=1,\cdots,n.
\end{equation}
It is shown in \cite{delPinoKowalczyk2008} that the functional  $I_\ve $  in \equ{func} does have a $2\pi$-periodic critical point ${\bf f}^\ve(t)$ such that
 \be {\bf f}^\ve(t)\,=\, {\bf f}^0(t)+  \frac {o(1)}{\sqrt{|\log\ve|}}   , \quad  {\bf f}^0(t):= \frac 1{\sqrt{|\log\ve|}} \tilde{\bf f}^0(t)\label{f0} \ee  
 uniformly as $\ve\to 0$,
and it is conjectured the existence of a solution $u_\ve (z,t)$ to the system
\begin{equation}\label{eq:GLequationeps3}
\e^2\Delta u +(1-|u|^2)u=0,  \quad  (z,t) \in \R^3,
\end{equation}
which is $2\pi$-periodic in $t$ and has the approximate form \equ{form} for ${\bf f}$ as in \equ{f0}.
The recent work \cite{ContrerasJerrard2017} has established a rigorous connection, in the sense of $\Gamma$-convergence,
between minimizers of the functional \equ{We} and minimizers in cylinders with suitable Dirichlet boundary condition, thus providing evidence towards the conjecture in \cite{delPinoKowalczyk2008}. In this paper we prove this conjecture.

\begin{theorem}\label{th:main1}
For every $n\geq 2$ and for $\e$ sufficiently small there exists a solution $u_\ve (z,t)$ of \eqref{eq:GLequationeps3}, $2\pi$-periodic in the
$t$-variable,   with the following asymptotic profile:
\begin{equation*}
u_\e(z,t)=\prod_{k=1}^n W\left(\frac{z-    {f}^\ve_k(t)}{\e} \right)+  \varphi_\ve(z,t),
\end{equation*}
where ${f}_k^\ve(t)$ is $2\pi$-periodic with the asymptotic behavior \eqref{f0} and
\begin{equation*}
|\varphi_\e(z,t)| \, \le\,\frac{C}{|\log \e|}. 
\end{equation*}
Besides we have
\begin{equation}\label{eq:prop_Gibbons}
\lim_{|z|\rightarrow +\infty} |u_\e(z,t)|=1 \text{ uniformly in } t.
\end{equation}
\end{theorem}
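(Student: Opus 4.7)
The plan is to impose the helical symmetry that the rotating configuration $\mathbf f^\ve(t)=e^{it}\mathbf f^\ve(0)$ naturally forces: look for solutions of the form
\begin{equation*}
u_\ve(z,t)=V(e^{-it}z),
\end{equation*}
so that the rotating vortex lines become a stationary configuration in the rotating frame. A direct computation rewrites \eqref{eq:GLequationeps3} as the autonomous planar equation
\begin{equation*}
\ve^2\Delta V+\ve^2\pp_\theta^2 V+(1-|V|^2)V=0\inn\R^2,
\end{equation*}
where $\pp_\theta=-y\pp_x+x\pp_y$ is the angular vector field about the origin. As approximate solution I would then take
\begin{equation*}
U_0(w)=\prod_{k=1}^n W\!\left(\frac{w-a_k}{\ve}\right),\quad a_k=a_k^0+O\!\left(\tfrac{1}{|\log\ve|}\right),\ a_k^0:=\tfrac{\sqrt{n-1}}{\sqrt{|\log\ve|}}e^{2i(k-1)\pi/n},
\end{equation*}
and estimate the error $E(U_0):=\ve^2\Delta U_0+\ve^2\pp_\theta^2 U_0+(1-|U_0|^2)U_0$ in weighted norms adapted to the vortex geometry. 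The novelty relative to the classical planar multi-vortex theory is the contribution $\ve^2\pp_\theta^2 U_0$: near each $a_k$ it is of size $|a_k|^2=O(|\log\ve|^{-1})$, and — this is the crucial point — its projection on translation modes encodes precisely the centrifugal $\tfrac12|\log\ve||f_k'|^2$ term in $\mathcal W_\ve$ that balances the logarithmic repulsion and produces rotating equilibria.

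\textbf{Step 2: Linear theory and nonlinear fixed point.} Writing $V=U_0+\phi$ I would set
\begin{equation*}
L(\phi):=\ve^2\Delta\phi+\ve^2\pp_\theta^2\phi+(1-|U_0|^2)\phi-2\RE(\bar U_0\phi)U_0,
\end{equation*}
so the equation becomes $L(\phi)=-E(U_0)-N(\phi)$ with $N$ the standard Ginzburg-Landau nonlinearity. Since $\ve^2\pp_\theta^2$ is of lower order with respect to the principal part $\ve^2\Delta$ on the vortex scale, the inversion of $L$ is a controlled perturbation of the 2-D multi-vortex linear theory of Pacard-Rivière and del Pino-Kowalczyk-Musso: modulo the near-kernel spanned by the translation modes $Z_{k,l}=\ve^{-1}\pp_{x^l}W((w-a_k)/\ve)$, $k=1,\dots,n$, $l=1,2$, one obtains an a priori estimate $\|\phi\|_*\lesssim\log|\log\ve|\,\|h\|_{**}$ in suitable weighted norms that also enforce $|\phi(w)|\to 0$ as $|w|\to\infty$. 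A contraction mapping in a ball of radius $O(|\log\ve|^{-1})$ then produces a unique $\phi=\phi(\mathbf a)$ together with Lagrange multipliers $c_{k,l}(\mathbf a)$ solving
\begin{equation*}
L(\phi)+N(\phi)+E(U_0)=\sum_{k,l}c_{k,l}(\mathbf a)\,Z_{k,l}.
\end{equation*}

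\textbf{Step 3: Reduced problem and main obstacle.} It remains to solve the $2n$-dimensional system $c_{k,l}(\mathbf a)=0$. Testing the above identity against $Z_{k,l}$ and expanding, this system is shown to be, to leading order in $|\log\ve|^{-1}$, the gradient of the static reduced functional
\begin{equation*}
\mathcal W_\ve^{\mathrm{red}}(\mathbf a)=\pi\!\left[|\log\ve|\sum_{k=1}^n|a_k|^2-2\sum_{j\neq k}\log|a_j-a_k|\right]+O(1),
\end{equation*}
which is exactly $\mathcal W_\ve(\mathbf f)$ evaluated on the rigid rotations $\mathbf f(t)=e^{it}\mathbf a$. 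The polygonal configuration $\mathbf a^0$ is a critical point, and, after factoring out the trivial $SO(2)$ rotational invariance (e.g.\ by fixing $a_1\in\R_+$), an explicit Hessian computation at $\mathbf a^0$ gives non-degeneracy; the implicit function theorem then supplies the desired $\mathbf a^\ve=\mathbf a^0+o(|\log\ve|^{-1/2})$. The hardest part I anticipate is this reduction: one must carry out the asymptotic expansion of $c_{k,l}(\mathbf a)$ with enough precision — tracking the contributions of both the inner-vortex interaction, the centrifugal term $\ve^2\pp_\theta^2 U_0$, and the nonlinear remainder $N(\phi)$ — to certify that the Jacobian is a genuine small perturbation of the non-degenerate Hessian of $\mathcal W_\ve^{\mathrm{red}}$. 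Once this is done, the solution $V$ transported back by $u_\ve(z,t)=V(e^{-it}z)$ yields the claimed profile, and \eqref{eq:prop_Gibbons} follows from $|U_0(w)|\to 1$ at infinity together with the pointwise decay $\phi(w)\to 0$ encoded in the chosen weighted norms.
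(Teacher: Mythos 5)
Your Step 1 reduction is not quite right, and it matters for the statement being proved. You write $u_\e(z,t)=V(e^{-it}z)$, which indeed transforms \eqref{eq:GLequationeps3} into $\ve^2\Delta V+\ve^2\p_\theta^2V+(1-|V|^2)V=0$. But with $V\approx\prod_{k=1}^n W\bigl(\tfrac{w-a_k}{\ve}\bigr)$ this gives, after using $W(e^{-it}\zeta)=e^{-it}W(\zeta)$,
\begin{equation*}
u_\e(z,t)=e^{-int}\prod_{k=1}^n W\Bigl(\tfrac{z-e^{it}a_k}{\ve}\Bigr)+\text{small},
\end{equation*}
and the factor $e^{-int}$ is a genuine $O(1)$ discrepancy with the profile $\prod_k W\bigl(\tfrac{z-f_k^\e(t)}{\ve}\bigr)+\varphi_\e$, $|\varphi_\e|\le C/|\log\e|$, asserted by Theorem~\ref{th:main1}: since $|\prod_k W|\to1$ at infinity, any attempt to absorb $e^{-int}-1$ into $\varphi_\e$ fails. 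The paper instead postulates $u=e^{2it}\tilde u$ for $n=2$ (equivalently $u(z,t)=e^{int}V(e^{-it}z)$ in your notation), and the transport to the rotating frame then produces the planar equation \eqref{GL2-dimensionalrescaled-2} with the additional terms $-4i\e^2\p_sV-4\e^2V$ (more generally $-2in\e^2\p_\theta V-n^2\e^2V$). These extra terms are $O(\e^2)$ and do not change the analytic difficulty, but without the phase factor in the ansatz you construct a different solution and do not prove the stated theorem.

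The second, and analytically more serious, gap is in Step 3. Your a priori estimate gives $\|\phi\|_*=O(|\log\e|^{-1})$, and you intend to read off the reduced system by testing against $Z_{k,l}$. But the leading term of the Lyapunov--Schmidt coefficient (the one encoding $\nabla\mathcal W_\e^{\mathrm{red}}$) comes out at size $\e\sqrt{|\log\e|}$ (this is what the paper computes: $cc_*=\e\sqrt{|\log\e|}(a_0/\d-a_1\d)+o(\e\sqrt{|\log\e|})$). On the other hand a na\"ive bound on the projection of $N(\phi)$ against $Z_{k,l}$ gives only $O(\|\phi\|_*^2)=O(|\log\e|^{-2})$, which is \emph{much larger} than $\e\sqrt{|\log\e|}$, so the reduced problem cannot be closed at this level of accuracy. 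This is exactly the difficulty the paper flags and resolves by decomposing $\psi$ and the error $R$ into ``odd'' and ``even'' Fourier-mode parts around each vortex (equations \eqref{hoj}, \eqref{psiejoj}, Propositions~\ref{errorProp2} and~\ref{prop:sharp2b}): the large ``even'' part is orthogonal to $W_{x_1}$ by symmetry and drops out of the projection, while the ``odd'' part is controlled by the separate norm $|\cdot|_\sharp$ at the much smaller scale $\e\sqrt{|\log\e|}$ (Proposition~\ref{prop:nonlinear}). Without some substitute for this mechanism, the ``asymptotic expansion of $c_{k,l}(\mathbf a)$ with enough precision'' you invoke is not available, and the implicit function theorem cannot be applied. (Note also that, thanks to the discrete symmetries imposed in \eqref{eq:eqsymmetriesofpsi}, the paper reduces to a single scalar unknown $\d$ and a single multiplier $c$ and then uses the intermediate value theorem, rather than your $2n$-parameter Hessian computation; this is merely a simplification, but the odd/even decomposition is essential either way.)
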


We are also able to construct another family of solutions to \eqref{eq:GLequationeps3}. Until now we have dealt only with vortex filaments of degree \(+1\). However it is believed that, in presence of several vortex filaments of different degrees \(d_k \in \mathbb{Z}\) the energy governing the interaction of the filaments is

\begin{equation}\label{interacting_several}
\mathcal W _\e(\mathbf{ f}):=\pi \int_0^{2\pi}  \Big ( \, |\log \e| \frac12 \sum_{k=1}^n|f'_k(t)|^2-\sum_{j\neq k}d_jd_k\log |f_j(t)-f_k(t)| \, \Big ) \dif  t.
\end{equation}
There exist special critical points of \eqref{interacting_several} analogous to \eqref{eq:def_fvortex} for \(d_1=-1\) and \(d_k=+1\) for \(k=2,\dotsc,n\) when \(n \geq 5\). These critical points can be written as
\begin{equation}\label{eq:def_vortex_2}
g_1^0(t)=0, \quad g_k^0(t)= \sqrt{n-4}e^{it}e^{2i(k-1)\pi/(n-1)}, \ \ \ k=2,\dotsc,n, \ \ \ n\geq 5.
\end{equation}
From these solutions we can obtain: 

\begin{theorem}\label{th:main2}
For every $n\geq 5$ and for $\e$ sufficiently small there exists a solution $u_\ve (z,t)$ of \eqref{eq:GLequationeps3}, $2\pi$-periodic in the
$t$-variable,   with the following asymptotic profile:
\begin{equation*}
u_\e(z,t)=\overline{W}(z)\prod_{k=2}^n W\left(\frac{z-    {g}^\ve_k(t)}{\e} \right)+  \varphi_\ve(z,t),
\end{equation*}
where ${g}_k^\ve(t)$ is $2\pi$-periodic with \(g_k^\e(t)=g_k(t)+\frac{o_\e(1)}{\sqrt{|\log \e|}}\), \(g_k\) defined by \eqref{eq:def_vortex_2}  and
\begin{equation*}
|\varphi_\e(z,t)| \, \le\,\frac{C}{|\log \e|}. 
\end{equation*}
Besides we have
\begin{equation*}
\lim_{|z|\rightarrow +\infty} |u_\e(z,t)|=1 \text{ uniformly in } t.
\end{equation*}
\end{theorem}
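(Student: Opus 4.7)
The plan is to follow closely the strategy used for Theorem \ref{th:main1}, adapting each step to accommodate the mixed-sign configuration. The basic ansatz is
\[
u_0(z,t) \,=\, \overline{W}(z) \prod_{k=2}^n W\left(\frac{z- g_k^\ve(t)}{\ve}\right) \cdot e^{i\psi_\ve(z,t)},
\]
where $\psi_\ve$ is a small phase correction that glues together the $-1$ vortex pinned at the origin with the $n-1$ rotating degree $+1$ vortices, and the curves $g_k^\ve(t)$ are to be determined as a perturbation of the rotating profile $g_k^0(t)$ from \eqref{eq:def_vortex_2}. A preliminary task is to check that $g_k^0$ is indeed a critical point of the functional $\mathcal{W}_\ve$ in \eqref{interacting_several} with $d_1=-1$, $d_k=+1$ for $k\ge 2$. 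After the rescaling $g_k = \tilde g_k / \sqrt{|\log\ve|}$ the Euler--Lagrange system becomes
\[
-\tilde g_k''(t) \,=\, 2\sum_{i\neq k} d_id_k \,\frac{\tilde g_k(t)-\tilde g_i(t)}{|\tilde g_k(t)-\tilde g_i(t)|^2},
\]
for which a direct computation shows that \eqref{eq:def_vortex_2} is a rigidly rotating solution precisely when $n\ge 5$ (the origin vortex is forced to stay fixed by the $\mathbb{Z}_{n-1}$ symmetry, and the radius $\sqrt{n-4}$ is what balances the centripetal acceleration against the mixed attractive/repulsive interactions).

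Next I would set up a Lyapunov--Schmidt reduction around $u_0$. Writing $u=u_0+\varphi$, the equation becomes a linear problem $\mathcal L_\ve \varphi = \mathcal N_\ve(\varphi) + E_\ve$, where $E_\ve$ is the error of the approximation and $\mathcal N_\ve$ is the nonlinear remainder. The linearized operator at a single vortex $W$ (or $\overline W$) is known to have a two-dimensional kernel spanned by the translation modes $\pp_{x_1} W, \pp_{x_2} W$. Correspondingly, one expects the linearization at $u_0$ to admit an approximate $2(n-1)$-dimensional cokernel modelled on translations of the $g_k^\ve$, $k=2,\ldots,n$ (the origin vortex contributes no translation modes since it is pinned by symmetry). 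Working in suitable weighted Sobolev spaces adapted to the decay of $|W|$ at infinity and to the periodicity in $t$, one solves $\mathcal L_\ve \varphi = h$ modulo this finite-dimensional obstruction, obtaining estimates that allow a contraction argument to produce $\varphi=\varphi_\ve(g^\ve)$ with $\|\varphi_\ve\|_\infty \le C/|\log\ve|$ for every admissible choice of curves $g^\ve$.

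The reduced problem is then finite-dimensional: one must choose $g^\ve_k(t)$ to cancel the projections of the equation onto the translation modes. As in \cite{delPinoKowalczyk2008} and in the proof of Theorem \ref{th:main1}, these projections are $C^1$--close to the Euler--Lagrange operator of $\mathcal W_\ve$ in \eqref{interacting_several}. Imposing the same symmetries enjoyed by $g^0$ (rotation by $2\pi/(n-1)$ combined with complex conjugation $z\mapsto \bar z$, and the reflection fixing the origin), one can search for $g^\ve$ in the symmetric class, where the linearization of the reduced problem around $g^0$ becomes the linearization of the ODE system above. A perturbation argument via the implicit function theorem then produces the desired $g^\ve_k = g^0_k + o(1)/\sqrt{|\log\ve|}$ and hence the solution. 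The property \eqref{eq:prop_Gibbons} follows from the fact that $u_0$ tends to a unimodular function at infinity (the product of $n-1$ $+1$-vortices and one $-1$-vortex has total degree $n-2$, but $|u_0|\to 1$) combined with the decay of $\varphi_\ve$.

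The main obstacle is verifying the non-degeneracy of the critical point $g^0$ for the mixed-sign logarithmic $n$-body problem, restricted to the space of admissible symmetric perturbations. Unlike the equal-sign case analyzed in \cite{delPinoKowalczyk2008}, here the Hessian of $\mathcal W_\ve$ mixes attractive ($d_jd_k<0$) and repulsive ($d_jd_k>0$) contributions, and one must rule out symmetric zero modes beyond those generated by the obvious invariances (time translation, global rotation of the $(n-1)$ outer vortices). A secondary technical difficulty is controlling the interaction between $\overline W(z)$ and the nearby $W$-factors: because opposite-sign vortices attract, the naive product ansatz carries a phase defect of order $|\log\ve|^{-1/2}$ near $z=0$, which must be absorbed into $\psi_\ve$ to keep $E_\ve$ small enough for the linear theory to close.
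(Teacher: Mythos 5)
The paper does not give a separate proof of Theorem~\ref{th:main2}; it only states that ``the proof of Theorem~\ref{th:main2} follows the same lines'' as that of Theorem~\ref{th:main1}, whose proof occupies the rest of the paper. Measured against that proof, your proposal misses the two devices that actually make the argument close, and in their absence the approach you outline would stall.

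First, the paper's entire strategy pivots on the screw-driving symmetry: writing $u(r,\theta,t)=e^{2it}U(r,\theta-t)$ in the $n=2$ case (and an analogous factor for general $n$) collapses the three-dimensional, $t$-periodic problem to a \emph{two-dimensional} equation \eqref{GL2-dimensionalrescaled-2} for $U$. After this reduction the unknown ``filament curves'' are forced to be helices by construction, so the whole reduced problem shrinks to choosing a \emph{single scalar} parameter $\hat d$; the additional reflection symmetries \eqref{eq:eqsymmetriesofpsi} then cut the Lyapunov--Schmidt cokernel down to one coefficient $c=c(\hat d)$, and $\hat d$ is found by the intermediate value theorem. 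Your proposal instead keeps the problem three-dimensional, treats the $(n-1)$ periodic curves $g_k^\ve(t)$ as infinite-dimensional unknowns, and posits a $2(n-1)$-dimensional (in fact infinite-dimensional over $t$) cokernel whose solvability requires the non-degeneracy of $g^0$ in the mixed-sign $n$-body problem --- a non-degeneracy you correctly identify as delicate and do not establish. The paper never needs it, precisely because symmetry reduces to a one-parameter family. Second, you do not address the size of the error. In this problem $R$ is only $O(1/|\log\e|)$ while the Lyapunov--Schmidt coefficient one wants to cancel lives at scale $\e\sqrt{|\log\e|}$; a naive fixed-point argument therefore does \emph{not} close. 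The paper's remedy is the decomposition of the error and the perturbation into ``odd'' (small, degree-odd Fourier modes around each vortex) and ``even'' (large but orthogonal to the kernel) parts, culminating in Propositions~\ref{errorProp2}, \ref{prop:sharp2b} and estimate \eqref{estimatesPhi}; without it the reduced equation cannot be isolated at the correct order. Finally, a smaller point: the paper's perturbation ansatz \eqref{eq1} is additive near the vortex cores and multiplicative away from them; a purely multiplicative $V_d e^{i\psi}$, as in your sketch, does not permit adjusting the vortex positions at the linear level near the cores. Your verification that \eqref{eq:def_vortex_2} is a rotating equilibrium of \eqref{interacting_several} with $\rho=\sqrt{n-4}$ for $n\ge5$ is correct, but the remainder of the plan does not reproduce the mechanism that makes the proof work.
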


The proofs of both theorems give a precise answer to the existence question, with an accurate description of the solution. They take specific advantage of the geometric setting: the configuration predicted is one of multiple helix vortex curves periodically winding around each other.
The Ginzburg-Landau equation has a {\em screw driving} symmetry which we take advantage of to reduce the original problem to a planar one. For constructions of solutions with helical vortex structures we refer also to \cite{Chiron_2005,WeiJun2016}.

\medskip
We observe that in terms of the parameterless equation  \equ{eq:GLequation} for $N=3$, 
what we find is a family of entire solutions $u_\ve(z,t)$,   $\frac{2\pi}{\ve}$-periodic in $t$ with approximate form 
$$
u_\ve (z,t) \approx  \prod_{j=1}^n  W\left( z-   \ve^{-1} f^\ve_j(\ve t) \right).
$$
Equation \eqref{eq:GLequation} is the  complex-valued version of the Allen-Cahn equation of phase transitions,
\begin{equation}\label{ac}
\e^2\Delta u +u-u^3=0 \inn \R^N,
\end{equation}
where $u:\R^N \rightarrow \R.$
The Allen-Cahn model describes transitions of two phases between the values $-1$ and $+1$ essentially separated by a thick wall, which for small $\ve$ should lie close to a minimal hypersurface. Solutions with screw-driving symmetry, whose zero level set is precisely a helicoid have been built in \cite{delPinoMussoPacard2012} (and extended to the fractional case in \cite{CintiDaviladelPino2016}).
Solutions with multiple interfaces whose interactions are governed by mechanical systems (Toda systems) have been built in \cite{delPinoKowalczykWei2008,delPinoKowalczykPacardWei2010,delPinoKowalczykWeiYang2010,agudelo}.
The celebrated {\em De Giorgi conjecture} states that, at least up to dimension $N=8$ solutions of \equ{ac}  which  are monotone in one direction must have
 one dimensional-symmetry 
 namely their level sets must be parallel  hyperplanes, see
 \cite{GhoussoubGui1998,AmbrosioCabre2000,Savin2009,delPinoKowalczykWei2011} and references therein.
A variant of this statement is the  {\em Gibbons conjecture}: an entire solution $u$  of \equ{ac} such that
$$\lim_{|x_N|\rightarrow +\infty }|u(x',x_N)|= 1 \quad\hbox{uniformly in $x'\in \R^{N-1}$} $$
 must necessarily be a function of $x_N$ only.
 This fact has been proven for any  $N\geq 2$. See  \cite{BarlowBssGui2000,BerestyckiHamelMonneau2000,Farina1999}. The analogous question for
 the Ginzburg-Landau equation in $\R^N$, $N\ge 3$,  originally formulated by H. Brezis is whether or not a solution $u(z,t)$, $(z,t)\in \R^2 \times \R^{N-2}$ with
 \be\label{loc} \lim_{|z|\rightarrow +\infty }|u(z,t)|= 1 \quad\hbox{uniformly in $t \in \R^{N-2}$}, \ee
must necessarily be a function of $z$. This turns out to be false since the solutions in Theorem
  \ref{th:main1} satisfy \equ{loc}. We observe that
solutions $W_n(z)$ of \eqref{eq:GLequation}  with total degree $n$, $|n|\ge 1$  of the form \begin{equation}\nonumber
W_n(z) =  e^{in\theta} w_n(r),\quad   w_n(0)= 0, \quad w_n(+\infty) = 1, \quad  z= re^{i\theta}, \end{equation} are known to exist for each $n\ge 2$,
see \cite{HerveHerve1994,ChenElliottQi1994}. The solutions in Theorem
  \ref{th:main1} have transversal total degree equal to $n\ge 2$. A natural question is whether or not Brezis' statement holds true under the additional assumption of total transversal degree equal to $\pm 1$. See
  \cite{Mironescu1996,Sandier1998,Shafrir1994} for the corresponding
  question in dimension $N=2$ and \cite{SandierShafrir2017} for a conjecture on the symmetry of entire solutions of \eqref{eq:GLequation} when \(N=3\).

 \medskip
 We will devote the rest of this paper to the proof of Theorem \ref{th:main1}. The proof of Theorem \ref{th:main2} follows the same lines.  As we have mentioned, the key observation is that the invariance under screw-driving symmetry allows to reduce the problem to one in the plane, for which the solution to be found has a finite number of vortices with degree 1. For simplicity we treat only the case $n=2$ in the following but the arguments can be easily adapted. We will look for solutions that are close  to the approximation
\begin{equation}\label{Approx}
u_d(x,y,t)=W\left(\frac{x-d\cos t}{\e},\frac{y-d\sin t}{\e}\right)W\left(\frac{x+d\cos t}{\e},\frac{y+d\sin t}{\e}\right)
\end{equation}
when $\e$ is small. Here $d$ is a parameter of size $1/\sqrt{|\log \e|}$.

It can be observed that the zero set of $u_d$ has the shape of a double helix and the function  $\tilde{u}_d:=e^{2it}u_d$ is screw-symmetric (see Definition \ref{screwsim}). Thus $\tilde{u}_d$ can be expressed as a function of two variables, what reduces the problem to a 2-dimensional case.  We will look for screw-symmetric perturbations of $\tilde{u}_d$.  Our approach will be based on the method in \cite{delPinoKowalczykMusso2006}, devised to build up solutions with isolated vortices when $N=2$ using a  Lyapunov-Schmidt reduction.
A major difficulty that we need to overcome is the presence of very large terms in the error of approximation. In the $2$-dimensional case we immediately obtain errors that are of size $O(\e)$ while here it is  $O(1/|\log \e|)$. This is a major difficulty since the
 vortex-location adjustment arises at essentially $\ve$-order.
This is overcome by carefully decomposing the error created by the nonlinearity in ``odd'' small  and ``even'' large  Fourier modes parts.
The even part has at main order no effect in the solvability conditions needed in the linear theory we devise in Proposition \ref{prop:linearfull}. These steps are rather delicate and we will carry them out in detail in what follows.

\section{Reduction to a 2-dimensional problem by using screw-symmetry}
As a first step we reduce our 3-dimensional problem to a related 2-dimensional one. To do so, we work with a particular type of symmetry. To define this symmetry it is convenient to use cylindrical coordinates $(r,\theta, t)\in \R^+\times \R \times \R$, and to work with functions that are \( 2\pi \) periodic in the second variable.
\begin{definition}\label{screwsim}
We say that a function $u$ is screw-symmetric if
\begin{equation*}
u(r,\theta+h,t+h)=u(r,\theta,t)
\end{equation*}
for any $h \in\R$.
\end{definition}
Notice that this condition is equivalent to $u(r,\theta,t+h)=u(r,\theta-h,t)\mbox{ for any }h\in \R,$
and then a screw-symmetric function can be expressed as a function of two-variables. Indeed, for any \((r,\theta,t) \in \R^+\times\R\times\R\),
\[u(r,\theta,t)=u(r,\theta-t,0)=:U(r,\theta-t). \]
Writing the standard vortex of degree one in polar coordinates $(r,\theta)$, i.e., $W(z)= w(r)e^{i\theta}$, we can see that the approximation $u_d$ defined in \eqref{Approx} satisfies
\[u_d(r,\theta,t+h)=e^{2ih}u_d(r,\theta-h,t) \]
for any \(h\) in \(\R\). That is, $u_d$ is not screw-symmetric but $\tilde{u}_d(r,\theta,t):=e^{-2it}u_d(r,\theta,t)$ is. Hence we can write $u_d$ as $u_d=e^{2it} \tilde{u}_d$, with $\tilde{u}_d$ a screw symmetric function.

This suggests to look for solutions $u$ of \eqref{eq:GLequationeps} that can be written as
\[u(r,\theta,t)=e^{2it}\tilde{u}(r,\theta,t) \]
with $\tilde{u}$ screw-symmetric. Thus $\tilde{u}(r,\theta,t)=U(r,\theta-t)$, being $U:\R^+\times \R$ $2\pi$-periodic in the second variable. Denoting $U=U(r,s)$ we can see that
$$
\p_ru = e^{2it}\p_r U(r,s), \quad \p^2_{rr}u = e^{2it} \p^2_{rr} U(r,s), \, \quad
\p_\theta u = e^{2it}\p_s U(r,s), \quad \p^2_{\theta \theta} u = e^{2it} \p^2_{ss} U(r,s),$$
$$\p_t u  = [2iU-\p_sU]e^{2it}, \ \ \  \p^2_{tt} u =[\p^2_{ss}U-4i\p_sU-4U ]e^{2it}.$$
Recalling that the Laplacian in cylindrical coordinates is expressed by $\p^2_{rr}+\frac{1}{r}\p_r+\frac{1}{r^2}\p^2_{\theta \theta}+\p^2_{tt}$ we conclude that $u$ is a solution of \eqref{eq:GLequationeps} if and only if $U$ is a solution of
\begin{equation*}
\e^2\left( \p^2_{rr}U+\frac{1}{r}\p_rU +\frac{1}{r^2}\p^2_{ss}U +\p^2_{ss}U-4i\p_sU-4U \right)+(1-|U|^2)U=0 \text{ in } \R^*_+\times \R.
\end{equation*}
We  will also work in rescaled coordinates, that is, we define $V(r,s):=U(\e r, s)$ and we search for a solution to the equation
\begin{equation}\label{GL2-dimensionalrescaled}
\p^2_{rr}V +\frac{1}{r}\p_rV+\frac{1}{r^2}\p^2_{ss}V+\e^2(\p^2_{ss}V-4i\p_sV-4V)+(1-|V|^2)V=0 \text{ in } \R^*_+\times \R.
\end{equation}

From now on we will work in the plane $\R^2$, and we will use the notation $z=x_1+ix_2=re^{is}$. We denote by $\Delta$ the Laplace operator in $2$-dimensions, meaning
\begin{equation*}
\Delta=\p^2_{x_1x_1}+\p^2_{x_2x_2}=\p^2_{rr}+\frac{1}{r}\p_r+\frac{1}{r^2}\p^2_{ss}.
\end{equation*}
\noindent Then equation \eqref{GL2-dimensionalrescaled} can be written as
\begin{align}
\label{GL2-dimensionalrescaled-2}
\Delta V
+\e^2(\p^2_{ss}V-4i\p_sV-4V)+(1-|V|^2)V=0 \text{ in } \R^2 ,
\end{align}
and the approximate solution \eqref{Approx} in the new coordinates is given by
\begin{equation}
\label{eq:Approx2-dimensionalrescaled}
V_d(z)=
W(z-\tilde d) W ( z + \tilde d)
\end{equation}
where
\begin{equation}\nonumber
\dd:=\frac{d}{\e}=\frac{\hat{d}}{\e \sqrt{|\log \e|}},
\end{equation}
for some new parameter $\hat d = O(1)$.

\section{Formulation of the problem}\label{IV}

\subsection{Additive-multiplicative perturbation}
Let
\[
S(v) := 
\Delta v
+\e^2(\p^2_{ss}v-4i\p_sv-4v)+(1-|v|^2)v.
\]
The equation to be solved can be written as 
\begin{align}
\label{mainEq}
S(v)=0  .
\end{align}
Recall the notation $z = r e^{is} = x_1 + i x_2$ and $\Delta = \partial^2_{x_1x_1} + \partial^2_{x_2x_2} $.
When using the coordinates $(x_1,x_2)$ equation \eqref{mainEq} is posed in $\R^2$, while if we use polar coordinates $(r,s)$ the domain for \eqref{mainEq} is $r>0$, $s\in \R$ with periodicity.

Following  del Pino-Kowalczyk-Musso  \cite{delPinoKowalczykMusso2006},
we look for a solution to \eqref{mainEq} of the form
\begin{align}
\label{eq1}
v = \eta V_d (1+i\psi)  + (1-\eta) V_d e^{i\psi} ,
\end{align}
where $V_d$ is the ansatz \eqref{eq:Approx2-dimensionalrescaled} and $\psi$ is the new unknown.
The cut-off function $\eta$ in \eqref{eq1} is defined as
\begin{align*}
\eta(z) = \eta_1(|z-\tilde d|) + \eta_1(|z+\tilde d|) , \quad z\in \mathbb C=\R^ 2
\end{align*}
and  $\eta_1:\R \rightarrow [0,1]$ is a smooth cut-off function such that
\begin{align}
\label{eta1}
 \eta_1(t)=1 \text{ for } t\leq 1\text{ and }\eta_1(t)=0 \text{ for } t\geq 2.
\end{align}
 The reason for the form of the perturbation term in \eqref{eq1} is the same as in \cite{delPinoKowalczykMusso2006}.
On one hand, the nonlinear terms behave better for the norms that we consider when using the multiplicative ansatz, but near the vortices, an additive ansatz is better since it allows the position of the vortex to be adjusted.

Our objective here is to rewrite \eqref{mainEq} in the form 
\[
\mathcal L^\varepsilon\psi + R + \mathcal N(\psi)=0
\]
and identify the linear operator $\mathcal L^\varepsilon$, the error $R$ and the nonlinear terms  $\mathcal N(\psi)$.

\noindent It will be convenient to write
\begin{align*}
S  = S_0 + S_1
\end{align*}
where
\begin{align}
S_0(v) = \Delta v + (1-|v|^2) v, \quad
S_1(v) = 
\varepsilon^2(\p^2_{ss}v-4i\p_s v-4v). \label{def:S_0S_1}
\end{align}

\noindent We have 
\begin{align*}
S_0(V_d+ \phi ) &= S_0(V_d) + L_0(\phi) + N_0(\phi)
\\
S_1(V_d + \phi) &= S_1(V_d)  + S_1(\phi) ,
\end{align*}
where
\begin{align}
L_0(\phi) &= \Delta \phi + (1-|V_d|^2) \phi -2 \RE(\overline{ V}_d \phi) V_d \label{L0}
\\
N_0(\phi) & = -2\RE(\overline{V}_d \phi) \phi - |\phi|^2 (V_d+\phi). \label{N0}
\end{align}
Rewrite \eqref{eq1} as 
\begin{align*}
v &= V_d + \phi 
\\
\phi &= i V_d \psi + \gamma(\psi)
\\
 \gamma(\psi) &= (1-\eta) V_d (e^{i\psi}-1-i\psi)
\end{align*}
Then
\begin{align*}
S_0(v) &= S_0(V_d) + L_0(i V_d \psi) + L_0(\gamma(\psi)) + N_0(i V_d \psi + \gamma(\psi))
\\
S_1(v) &= S_1(V_d) + S_1(iV_d\psi) + S_1(\gamma(\psi)).
\end{align*}

We compute
\begin{align*}
L_0(iV_d\psi) &=
iV_d \left[ 
\tilde L_0 \psi 
+ \frac{S_0(V_d)  }{V_d}\psi
\right] ,
\end{align*}
where
\begin{align*}
\tilde L_0 (\psi) = \Delta \psi + 2\frac{\nabla V_d \nabla \psi }{V_d}
-2 i |V_d|^2 \IM(\psi),
\end{align*}
and so 
\begin{align}
\label{S0near}
S_0(v) &= i V_d
\Bigg[
-i\frac{S_0(V_d)}{V_d} 
+\tilde L_0 (\psi)
+ \frac{S_0(V_d)}{V_d}\psi 
- \frac{i}{V_d}L_0(\gamma(\psi)) 
- \frac{i}{V_d}N_0(i V_d \psi + \gamma(\psi))
\Bigg] .
\end{align}

We note that far from the vortices we have
\begin{align}
\label{S0far}
S_0(v) &=  S_0(V_d e^{i\psi} )
=
i V_d e^{i \psi} 
\left[
- i \frac{S_0(V_d)}{V_d}
+\tilde L_0 (\psi )
+ \tilde N_0(\psi)
\right] ,
\end{align}
where
\begin{align*}
\tilde N_0(\psi) = i (\nabla \psi)^2 + i |V_d|^2(e^{-2\IM(\psi)}-1+2\IM(\psi) ).
\end{align*}

Similarly we compute
\begin{align}
\label{S1near}
S_1( i V_d \psi ) 
&=
 iV _d
\left( 
\frac{S_1(V_d)}{V_d}\psi
+\tilde L_1 (\psi )
\right)
\end{align}
where
\begin{align*}
\tilde L_1 (\psi) = \varepsilon^2 \Bigl( \partial_{ss}^2 \psi
+\frac{2\partial_s V_d }{V_d}\partial_s \psi 
- 4i \partial_s \psi  \Bigr) .
\end{align*}
Far away from the vortices we have
\begin{align}
\label{S1far}
S_1(V_d e^{i\psi} )
&=
 i V_d e^{i\psi}
\left[
-i\frac{S_1(V_d)}{V_d}
+ \tilde L_1 (\psi)
+\varepsilon^2 i (\partial_s\psi)^2 
\right] .
\end{align}

We let 
\[
\tilde \eta(z ) = \eta_1(|z-\tilde d|-1) + \eta_1(|z+\tilde d|-1)
\]
with $\eta_1$ defined in  \eqref{eta1}.
Then we write  $S(v)=0$ as 
\begin{align*}
0 & =\tilde\eta   i V_d
\Bigg[
-i\frac{S_0(V_d)}{V_d} 
+ \tilde L_0 (\psi)
+ \frac{S_0(V_d)}{V_d}\psi 
- \frac{i}{V_d}L_0(\gamma(\psi)) 
- \frac{i}{V_d}N_0(i V_d \psi + \gamma(\psi))
\\
& \qquad \qquad 
-i \frac{S_1(V_d)}{V_d}
+ \tilde L_1 (\psi)
+ \frac{S_1(V_d)}{V_d}\psi
- \frac{i}{V_d} S_1(\gamma(\psi))
\Bigg]
\\
& \quad 
+ (1-\tilde\eta) 
i V_d e^{i \psi} 
\left[
- i \frac{S_0(V_d)}{V_d}
+ \tilde L_0 (\psi)
+ \tilde N_0(\psi)
- i \frac{S_1(V_d)}{V_d}
 + \tilde L_1 (\psi) + \varepsilon^2 i (\partial_s \psi)^2
\right] ,
\end{align*}
that is, we use expressions \eqref{S0near}, \eqref{S1near} near the vortices and  \eqref{S0far}, \eqref{S1far} far from them.
Hence $S(v)=0$  is equivalent to 

\begin{align*}
\mathcal L^\varepsilon (\psi) + R + \mathcal N(\psi)=0
\end{align*}
where
\begin{align}
\nonumber
\mathcal L^\varepsilon (\psi)
& := 
 (\tilde L_0 + \tilde L_1)  (\psi )
+ \tilde\eta \frac{S(V_d)}{V_d}\psi
\\
\label{defError}
R &:= -i \frac{S(V_d)}{V_d}
\\
\nonumber
\mathcal N(\psi) &:=
\tilde\eta\Bigl( \frac{1}{\tilde \eta + (1-\tilde\eta) e^{i\psi} }-1\Bigr) \frac{S(V_d)}{V_d}\psi
\\
\nonumber
& \qquad 
- \frac{i}{V_d} \frac{\tilde\eta}{\tilde\eta + (1-\tilde\eta) e^{i\psi} } \Bigl\{ L_0(\gamma(\psi)) + S_1(\gamma(\psi))+ N_0(i V_d \psi + \gamma(\psi)) \Bigr\}
\\
\nonumber
& \qquad 
+ \frac{(1-\tilde\eta)e^{i\psi} }{ \tilde\eta + (1-\tilde\eta) e^{i\psi} } \Bigl\{\tilde N_0(\psi)
+\varepsilon^2 i (\partial_s\psi)^2 \Bigr\}.
\end{align}
Note that explicitly
\begin{align}
\label{calLeps}
\mathcal L ^\varepsilon (\psi) =
 \Delta \psi + 2\frac{\nabla V_d \nabla \psi }{V_d}
-2 i |V_d|^2 \IM(\psi) 
+\varepsilon^2 \Bigl( \partial_{ss}^2 \psi
+\frac{2\partial_s V_d }{V_d}\partial_s \psi 
- 4i \partial_s \psi  \Bigr) 
+ \tilde\eta \frac{S(V_d)}{V_d}\psi ,
\end{align}
and that for $|z\pm \tilde d|\geq 3$ the nonlinear terms take the form
\begin{align*}
\mathcal{N}(\psi) &=
\tilde N_0(\psi)
+\varepsilon^2 i (\partial_s\psi)^2 
\\
&=
 i (\nabla \psi)^2 + i |V_d|^2(e^{-2\IM(\psi)}-1+2\IM(\psi) )
 +\varepsilon^2 i (\partial_s\psi)^2 .
\end{align*}

\subsection{Another form of the equation near each vortex}
In order to analyze the equation near each vortex, it will be useful to write it in a translated variable. Namely, we set $\dd_j:=(-1)^{1+j}\dd$ for \(j=1,2\), we define $\z:=z-\dd_j$ and the function $\phi_j(\z)$ through the relation
\begin{equation}\label{defphi_j}
\phi_j(\z)=iW(\z)\psi(z), \ \ |\z|< \dd.
\end{equation}
That is,
\begin{equation*}\nonumber
\phi(z)=iV_d \psi(z)=\phi_j(\z)\alpha_j(z),\qquad \mbox{where}\qquad \alpha_j(z)=\frac{V_d(z)}{W(z-\tilde{d}_j)}.
\end{equation*}
Hence in the translated variable the unknown \eqref{eq1} becomes, in $|\z|<\dd$,
\begin{equation*}
v(z)=\alpha_j(\z)\left(W(\z)+\phi_j(\z)+(1-\eta_1(\z))W(\z)\left[ e^{\frac{\phi_j(\z)}{W(\z)}}-1-\frac{\phi_j(\z)}{W(\z)}\right] \right).
\end{equation*}
We set 
\begin{equation*}
E:=S(V_d).
\end{equation*}
For $\phi_j, \psi$ linked through formula \eqref{defphi_j} we define
\begin{eqnarray}\label{defL_j}
L_j^\e(\phi_j)(\z)&:=&iW(\z)\mathcal{L}^\e(\psi)(\z+\dd_j) =\frac{L^\e_d(\phi)(z)}{\alpha_j(z)}+(\eta_1-1)\frac{E(z)}{V_d(z)}\phi_j(\tilde{z}) \nonumber \\
&=&\frac{L^\e_d(\phi_j \alpha_j)(z)}{\alpha_j(z)}+(\eta_1-1)\frac{E(z)}{V_d(z)}\phi_j(\tilde{z}),
\end{eqnarray}
with $L^\e_d$ defined by 
 \begin{equation}\label{Lepsd}
 L^\e_d(\phi):=\Delta \phi +\e^2(\p^2_{ss} \phi-4i\p_s \phi-4\phi)+(1-|V_d|^2)\phi-2\RE(\overline{V_d}\phi)V_d.
 \end{equation}
Let us also define
\begin{equation*}\begin{split}
S_2(V):=&\, \p^2_{rr}V +\frac{1}{r}\p_rV+\frac{1}{r^2}\p_{ss}V+\e^2(\p^2_{ss}V -4i\p_sV-4V), \nonumber \\
S_3(V):=&\, \p^2_{rr}V +\frac{1}{r}\p_rV+\frac{1}{r^2}\p_{ss}V+\e^2(\p^2_{ss}V-4i\p_sV). \nonumber
\end{split}\end{equation*} Notice that
\begin{equation*}\nonumber
E(z)=S_2(\alpha_j(z)W(\z))+(1-|W|^2|\alpha_j|^2)W(\z)\alpha_j(z),
\end{equation*}
and thus, using the equation satisfied by $W$,
\begin{equation*}\begin{split}
E=&\, WS_2(\alpha_j)+(1-|W|^2|\alpha_j|^2)\alpha_j W+2\nabla \alpha_j \nabla W+2\e^2\p_s\alpha_j \p_s W +\alpha_j S_3(W) \nonumber \\
=&\, WS_3(\alpha_j)-4\e^2W\alpha_j+(1-|W|^2|\alpha_j|^2)\alpha_j W+ 2\nabla \alpha_j \nabla W +2\e^2\p_s\alpha_j \p_s W\\
&\,+\alpha_j[\e^2(\p^2_{ss}W-4i\p_sW)-(1-|W|^2)W ]. \nonumber
\end{split}\end{equation*}
This allows us to conclude
\begin{eqnarray}\label{eq:defL_j}
L_j^\e(\phi_j)& =&L^0(\phi_j)+\e^2(\p^2_{ss} \phi_j-4i\p_s \phi_j-4\phi_j) +2(1-|\alpha_j|^2)\RE(\overline{W}\phi_j)W  \nonumber \\
& &-\left(2\frac{\nabla \alpha_j}{\alpha_j}\frac{\nabla W}{W}+2\e^2\frac{\p_s \alpha_j}{\alpha_j}\frac{\p_s W}{W}+\e^2 \frac{(\p^2_{ss}W-4i\p_sW)}{W}+4\e^2 \right)\phi_j \nonumber  \\
& &+2\frac{\nabla \alpha_j}{\alpha_j}\nabla \phi_j+2\e^2\frac{\p_s \alpha_j}{\alpha_j}\p_s \phi_j
+\tilde{\eta}\frac{E_j}{V_d^j}\phi_j, 
\end{eqnarray}
where $V_d^j=V_d(\z+d_j)$, \(E_j=S(V_d^j)\) and $L^0$ is the linear operator defined by \eqref{L0}.

Let us point out that for $|\z|<2$
\begin{equation}\label{eq:estimatesonalpha_j}
 |\alpha_j(\z)|=1+O_\e(\e^2|\log \e|), \ \ \ \nabla \alpha_j(\z)=O_ \e(\e\sqrt{|\log \e|}), \ \ \  \Delta \a_j=O_\e(\e^2|\log \e|).
\end{equation}
With this in mind, we can see that the linear operator $L_j^\e$ is a small perturbation of $L^0$.

\subsection{Symmetries assumptions on the perturbation}
We end this section by making use of the symmetries of the problem. Using the notation  $z=x_1+ix_2=re^{is}$ we remark that $V_d$  satisfies
\begin{equation*}
V_d(-x_1,x_2)=\overline{V}_d(x_1,x_2) \quad \text{ and } \quad V_d(x_1,-x_2)=\overline{V}_d(x_1,x_2).
\end{equation*}
We also remark that these symmetries are compatible with the solution operator $S$, that is, if $S(V)=0$ and $U(z)=\overline{V}(-x_1,x_2)$, then $S(U)=0$, and the same for $U(z)=\overline{V}(x_1,-x_2)$. Thus we look for a solution $V$ satisfying
\begin{equation*}
V(-x_1,x_2)=\overline{V}(x_1,x_2), \ \ \ \ V(x_1,-x_2)=\overline{V}(x_1,x_2),
\end{equation*}
what drives to ask
\begin{equation}\label{eq:eqsymmetriesofpsi}
\psi(x_1,-x_2)=-\overline{\psi}(x_1,x_2), \ \ \ \ \ \psi(-x_1,x_2)=-\overline{\psi}(x_1,x_2).
\end{equation}

\section{Error estimates of the approximated solution}\label{III}
In this section we compute the error of the approximation $V_d$ defined as $R= -i \frac{S(V_d)}{V_d} $ in \eqref{defError}.

In order to measure the size of the error of our approximation we fix  $0<\alpha,\sigma<1$. We recall that
\begin{align}
\label{defTildeDj}
\tilde{d}_j:=(-1)^{1+j}\tilde{d},
\end{align}
and we define the norm
\begin{align}
\|h\|_{**}
&:=\sum_{j=1}^2 \| V_d h\|_{C^\alpha(\l_j<3)}
+ \sup_{\rho_1>2,\rho_2>2} 
\Bigl[
\frac{| \RE(h)|}{\rho_1^{-2}+\rho_2^{-2}+\varepsilon^2}
+ \frac{ |\IM(h)|}{\rho_1^{-2+\sigma}+\rho_2^{-2+\sigma}+\varepsilon^{\sigma-2}} 
\Bigr] \nonumber
\\
& \qquad 
+ \sup_{2<|z-\tilde d_1|<2R_\varepsilon , \, 
2<|z-\tilde d_2|<2R_\varepsilon} 
\frac{ [ \RE(h)]_{\alpha,B_{|z|/2}(z)}}{ |z-\tilde d_1|^{-2-\alpha} +  |z-\tilde d_2|^{-2-\alpha}} \nonumber
\\
& \qquad 
+ \sup_{2<|z-\tilde d_1|<2R_\varepsilon , \, 
2<|z-\tilde d_2|<2R_\varepsilon} 
\frac{ [ \IM(h)]_{\alpha,B_{1}(z)}}{ |z-\tilde d_1|^{-2+\sigma} +  |z-\tilde d_2|^{-2+\sigma}} , \label{def:norm_**}
\end{align}
where $\|f\|_{C^\alpha(D)} = \|f\|_{C^{0,\alpha}(D)}$, and where we have used the notation
\begin{align}
[f]_{\alpha,D}
& = \sup_{x,y\in D , \, x\not=y}\frac{|f(x)-f(y)|}{|x-y|^\alpha} \label{def:Holder_norm}
\\
\|f\|_{C^{k,\alpha}(D)} 
&=
\sum_{j=0}^k \|D^jf\|_{L^\infty	(D)} \label{def:Ckalpha_norm}
+ [D^k f]_{\alpha,D} .
\end{align}

\begin{proposition}\label{prop:sizeoferror}
Let $V_d$ given by \eqref{eq:Approx2-dimensionalrescaled}, and denote
$$S(V_d)=E=iV_d R=iV_d(R_1+iR_2).$$
Then
\begin{equation*}
\| R\|_{**}\leq \frac{C}{|\log \e|}.
\end{equation*}
\end{proposition}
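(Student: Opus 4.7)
The proof is a direct computation. Writing $S(V_d)=S_0(V_d)+S_1(V_d)$ as in \eqref{def:S_0S_1} and using that each $W_j:=W(\cdot-\tilde d_j)$ solves $S_0(W_j)=0$, the product rule gives the clean identities
\begin{align*}
\frac{S_0(V_d)}{V_d} &= 2\,\frac{\nabla W_1}{W_1}\cdot\frac{\nabla W_2}{W_2} - (1-|W_1|^2)(1-|W_2|^2),\\
\frac{S_1(V_d)}{V_d} &= \varepsilon^2\Big[\frac{\partial_{ss}W_1}{W_1}+\frac{\partial_{ss}W_2}{W_2}+2\frac{\partial_s W_1}{W_1}\frac{\partial_s W_2}{W_2}-4i\Big(\frac{\partial_s W_1}{W_1}+\frac{\partial_s W_2}{W_2}\Big)-4\Big].
\end{align*}
Writing $W_j=w(\rho_j)e^{i\theta_j}$ with $\rho_j=|z-\tilde d_j|$ one has $\nabla W_j/W_j=(w'/w)(\rho_j)\hat\rho_j+(i/\rho_j)\hat\theta_j$, which yields an explicit decomposition of $S(V_d)/V_d$ into real and imaginary parts. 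Since $R=-iS(V_d)/V_d$, we have $\RE R=\IM(S(V_d)/V_d)$ and $\IM R=-\RE(S(V_d)/V_d)$, so the two components can be estimated independently.

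In the far region where $\rho_j\geq 2$ for both $j$, the classical decay of the vortex profile, namely $1-w(r)^2=O(r^{-2})$, $w'(r)=O(r^{-3})$ with matching bounds on $w''$, $w'''$, controls all the terms from $S_0$. The dominant contribution to $\IM R$ is the cross term $2\hat\theta_1\cdot\hat\theta_2/(\rho_1\rho_2)$, which fits inside $\rho_j^{-2+\sigma}$ because $\rho_k\geq \rho_j^{1-\sigma}$ is automatic for $\rho_j,\rho_k\geq 2$. The dominant contribution to $\RE R$ from $S_0$ is the mixed term $(w'_j/w_j)/\rho_k=O(\rho_j^{-3}\rho_k^{-1})$, which is dominated by $\rho_j^{-2}$. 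The $S_1/V_d$ contributions are all of order $\varepsilon^2$ once one uses $\partial_s W_j=iz\cdot\nabla W_j$ and bounds $|z|$ by $\rho_j+|\tilde d_j|$, and are absorbed by the $\varepsilon^2$ weight. The H\"older seminorms on the balls $B_{|z|/2}(z)$ (for $\RE R$) and $B_1(z)$ (for $\IM R$) prescribed by the definition of $\|\cdot\|_{**}$ then follow by differentiating these explicit expressions once more and reusing the same decay, since the chosen ball radii are exactly those on which $\rho_j$ is comparable throughout the ball.

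In the near-vortex region $\rho_j\leq 3$ I use the identity already derived in the excerpt,
\begin{align*}
E = W\,S_3(\alpha_j) - 4\varepsilon^2 W\alpha_j + (1-|W|^2|\alpha_j|^2)\alpha_j W + 2\nabla\alpha_j\cdot\nabla W + 2\varepsilon^2\partial_s\alpha_j\,\partial_s W + \alpha_j\, S_3(W),
\end{align*}
with $\alpha_j(z)=W(z-\tilde d_k)$, $k\neq j$, evaluated at a point of modulus $\sim 2\tilde d$. The estimates \eqref{eq:estimatesonalpha_j} then apply: $|\alpha_j-1|=O(\varepsilon^2|\log\varepsilon|)$, $|\nabla\alpha_j|=O(\varepsilon\sqrt{|\log\varepsilon|})$, with analogous bounds on $\Delta\alpha_j$, $\partial_s\alpha_j$, $\partial_{ss}\alpha_j$. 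Propagating these through the first five terms on the right produces contributions of size $O(\varepsilon^2|\log\varepsilon|)$. The leading piece is $\alpha_j S_3(W)$: using $S_3(W)=-(1-|W|^2)W+\varepsilon^2(\partial_{ss}W-4i\partial_s W)$ together with $\partial_s W=iz\cdot\nabla W$ and $|z|\sim\tilde d$ in this region, one finds $\varepsilon^2|\partial_s W|=O(\varepsilon^2\tilde d)=O(\varepsilon\sqrt{|\log\varepsilon|})$ and $\varepsilon^2|\partial_{ss}W|=O(\varepsilon^2\tilde d^2)=O(1/|\log\varepsilon|)$. This last bound is precisely what sets the overall scale $C/|\log\varepsilon|$ in $\|R\|_{**}$; all terms are moreover $C^\alpha$-smooth on the compact set $\{\rho_j\leq 3\}$, uniformly in $\varepsilon$.

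The main technical obstacle is the asymmetric treatment of $\RE R$ and $\IM R$: the slow-decaying cross term $\hat\theta_1\cdot\hat\theta_2/(\rho_1\rho_2)$ must land on the imaginary side after multiplication by $-i$, since only $\IM R$ tolerates the weaker $\rho_j^{-2+\sigma}$ rate, and the leading $\varepsilon^2\tilde d^2=O(1/|\log\varepsilon|)$ contribution hidden inside $\alpha_j S_3(W)$ must be isolated cleanly so that every other contribution is strictly subleading. Verifying the H\"older seminorms on the prescribed balls of two different radii is the remaining bookkeeping step.
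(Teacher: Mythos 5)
Your overall strategy — split $S = S_0 + S_1$, use the product structure to write $S(V_d)/V_d$ explicitly, and estimate near and far from the vortices separately — is sound, but there are two genuine gaps that must be closed before the argument yields $\|R\|_{**} \le C/|\log\varepsilon|$ rather than just $\|R\|_{**} = O(1)$.

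\textbf{Far-field estimate.} You bound the cross term $1/(\rho_1\rho_2)$ from $\IM(S_0(V_d)/V_d)$ by $\rho_j^{-2+\sigma}$, using $\rho_k\ge \rho_j^{1-\sigma}$. That inequality is true, but it only shows that $R$ has the correct decay profile, i.e.\ that the corresponding supremum in $\|\cdot\|_{**}$ is $O(1)$, not $O(1/|\log\varepsilon|)$. The missing ingredient is that, after reducing to a half-plane by symmetry, one of the two radii always satisfies $\rho_2 \ge \tilde d \sim 1/(\varepsilon\sqrt{|\log\varepsilon|})$. This is exactly what the paper uses: with $\lambda_1\le\lambda_2$ and $\lambda_2\ge C/(\varepsilon\sqrt{|\log\varepsilon|})$ one obtains
$1/(\lambda_1\lambda_2) \le C(\varepsilon\sqrt{|\log\varepsilon|})^\sigma\,\lambda_1^{-(2-\sigma)}$,
and $(\varepsilon\sqrt{|\log\varepsilon|})^\sigma \ll 1/|\log\varepsilon|$. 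The same observation is needed for the real-part cross term of the form $(w_j'/w_j)/\rho_k$. Without it, your far-field bound for the $S_0$ contribution is too weak.

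\textbf{Near-vortex estimate.} You invoke the identity
\[
E = W\,S_3(\alpha_j) - 4\varepsilon^2 W\alpha_j + (1-|W|^2|\alpha_j|^2)\alpha_j W + 2\nabla\alpha_j\cdot\nabla W + 2\varepsilon^2\partial_s\alpha_j\,\partial_s W + \alpha_j\, S_3(W),
\]
and assert that the first five terms are $O(\varepsilon^2|\log\varepsilon|)$ while $\alpha_j S_3(W)$ is the leading $O(1/|\log\varepsilon|)$ piece. That is not correct as stated: in the region $|\tilde z|<3$ the factor $1-|W|^2 = 1-w(\rho)^2$ is of order $1$, so the third term contains the $O(1)$ piece $(1-|W|^2)\alpha_j W$, and the last term $\alpha_j S_3(W) = -\alpha_j(1-|W|^2)W + \alpha_j\varepsilon^2(\partial_{ss}^2W - 4i\partial_sW)$ contains the matching $O(1)$ piece $-\alpha_j(1-|W|^2)W$. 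These two must be combined:
\[
(1-|W|^2|\alpha_j|^2)\alpha_j W - \alpha_j(1-|W|^2)W = \alpha_j W\,|W|^2\,(1-|\alpha_j|^2) = O(\varepsilon^2|\log\varepsilon|),
\]
using \eqref{eq:estimatesonalpha_j}. Without observing this cancellation, the near-vortex bound fails. The paper sidesteps the issue entirely by working with the direct product decomposition \eqref{S0Vd}
$S_0(V_d) = 2(W^a_{x_1}W^b_{x_1}+W^a_{x_2}W^b_{x_2}) + (1-|W^aW^b|^2+|W^a|^2-1+|W^b|^2-1)W^aW^b$,
in which every summand is manifestly small near $\lambda_1<3$ because $W^b$ is evaluated at distance $\sim 2\tilde d$ from its zero; the $\alpha_j$-identity is reserved in the paper for analysing the linearised operator, not the error.

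Two minor points: $\varepsilon^2\tilde d = O(\varepsilon/\sqrt{|\log\varepsilon|})$, not $O(\varepsilon\sqrt{|\log\varepsilon|})$ as you wrote (harmless here since both are subleading); and note that $\nabla\alpha_j\cdot\nabla W = O(\varepsilon\sqrt{|\log\varepsilon|})$, which is also larger than your claimed $O(\varepsilon^2|\log\varepsilon|)$ bound for the ``first five terms'', though still $\ll 1/|\log\varepsilon|$. The arithmetic slips are inessential; the two gaps above are not.
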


\begin{proof}

Let us write
\[
V_d = W^a W^b
\]
where
\[
W^a(z):=W(z-\dd) , \quad W^b(z) :=W(z+\dd) .
\]
We want to estimate $E:=S(V_d)$, i.e., how {\it far} our approximation is to be a solution. 

By symmetry it suffices to compute the error in the region $(x_1,x_2)\in \R^+\times \R$.
We denote
\begin{equation*}
\l_1 e^{i\theta_1}:=re^{is}-\dd, \ \ \ \ \l_2 e^{i \theta_2}:=re^{is}+\dd.
\end{equation*}
We recall that $\Delta (fg)=g\Delta f+f\Delta g+ 2\nabla f \nabla g$ and thus with \(S_0\) defined in \eqref{def:S_0S_1}:
\begin{equation*}\begin{split}
S_0(V_d)=&\,(W^a_{x_1x_1}+W^a_{x_2x_2})W^b+(W^b_{x_1x_1}+W^b_{x_2x_2})W^a  \nonumber \\
&\,+2(W^a_{x_1}W^b_{x_1}+W^a_{x_2}W^b_{x_2}) +(1-|W^aW^b|^2)W^aW^b.
\end{split}\end{equation*}
Using the fact that $\Delta W +(1-|W|^2)W=0$ in $\R^2$ we conclude that
\begin{equation}\label{S0Vd}
S_0(V_d)=2(W^a_{x_1}W^b_{x_1}+W^a_{x_2}W^b_{x_2})+\left(1-|W^aW^b|^2+|W^a|^2-1+|W^b|^2-1\right)W^aW^b.
\end{equation}
We estimate the size of this error separately in two different regions, near the vortices and far from them. Notice first that, since we work in the half-plane $\R^+\times \R$, we have
\begin{equation*}
\l_2\geq \dd \geq \frac{C}{\e \sqrt{|\log \e |}}
\end{equation*}
for some $C>0$ of order $1$.
\medskip

\noindent {\it Step 1:} Estimate of $S_0(V_d)$ near one vortex, i.e., when $|re^{is} -\dd| <3$

\noindent Writing $W=W(\l e^{i\theta})$ we have
\begin{eqnarray*}
W_{x_1}= e^{i\theta} \left( \r'(\l)\cos \theta-i\frac{\r(\l)}{\l}\sin \v \right), \ \ \
W_{x_2} = e^{i\v} \left( \r'(\l)\sin \v+i\frac{\r(\l)}{\l}\cos \v \right). \nonumber
\end{eqnarray*}
We define $\r_1:=\r(\l_1)$ and $\r_2:=\r(\l_2)$ and we obtain
\begin{equation*}\begin{split}
W^a_{x_1}W^b_{x_1} =&\,e^{i(\v_1+\v_2)}\Bigl\{ \r'_1\r'_2\cos \v_1 \cos \v_2 -\frac{\r_1\r_2}{\l_1 \l_2}\sin \v_1 \sin \v_2 \\
&-i\left[ \frac{\r'_1\r_2}{\l_2}\cos \v_1 \sin \v_2+\frac{\r'_2\r_1}{\l_1}\cos \v_2 \sin \v_1 \right]  \Bigr\},
\end{split}\end{equation*}
\begin{equation*}\begin{split}
W^a_{x_2}W^b_{x_2} =&\,e^{i(\v_1+\v_2)}\Bigl\{ \r'_1\r'_2\sin \v_1 \sin \v_2 -\frac{\r_1\r_2}{\l_1 \l_2}\cos\v_1 \cos \v_2 \\
&+i\left[ \frac{\r'_1\r_2}{\l_2}\sin \v_1 \cos \v_2+\frac{\r'_2\r_1}{\l_1}\cos \v_1 \sin \v_2 \right]  \Bigr\}.
\end{split}\end{equation*}
Since $\r'(\l)=\frac{1}{\l^3}+O(\frac{1}{\l^4})$ when $\l\rightarrow +\infty$ (see Lemma \ref{lem:propertiesofrho}) and $\l_2 \geq \frac{C}{\e \sqrt{|\log \e|}}$ we can see that
\begin{equation*}\nonumber
\|W^a_{x_1}W^b_{x_1}+W^a_{x_2}W^b_{x_2} \|_{L^\infty(\l_1<3)}\leq C\e \sqrt{|\log \e|}
\end{equation*}
when $\e$ is small and for some $C>0$. Using now that $\r(\l)=1-\frac{1}{2\l^2}+O(\frac{1}{\l^4})$ when $\l\rightarrow +\infty$ we obtain
\begin{equation*}\nonumber
\|(1-|W^aW^b|^2+ |W^a|^2-1+|W^b|^2-1)W^aW^b \|_{L^\infty(\l_1<3)}\leq C\e^2 |\log \e|,
\end{equation*}
and thus
\begin{equation}\label{eq:Errorestimate01}
\|E_0\|_{L^\infty(\l_1<3)}=\|S_0(V_0) \|_{L^\infty(\l_1<3)} \leq C \e \sqrt{|\log \e|}.
\end{equation}
Similarly,
\begin{equation}\label{eq:GradErrorestimate01}
\|\nabla E_0\|_{L^\infty(\l_1<3)}\leq C \e \sqrt{|\log \e|}.
\end{equation}
\medskip

\noindent {\it Step 2:} Estimate of $S_0(V_d)$ {\it far away} from the vortices, i.e., when $|re^{is}-\dd|>2$

\noindent We write $E_0=S_0(V_d)=iV_d(R_0^1+iR^2_0)$ with
\begin{equation*}\begin{split}
R_0^1=&\,2(\sin \v_1\cos \v_2-\cos \v_1\sin \v_2) \left(\frac{\r'_1}{\l_2\r_1}-\frac{\r'_2}{\l_1\r_2}\right) = 2\sin (\v_1-\v_2)\left(\frac{\r'_1}{\l_2\r_1}-\frac{\r'_2}{\l_1\r_2}\right),\\
R_0^2=&2\cos(\v_1-\v_2)\left(\frac{-\r'_1\r'_2}{\r_1\r_2}+\frac{1}{\l_1\l_2}\right)-\left(1-\r_1^2 \r_2^2+\r_2^2-1+\r_1^2-1 \right).
\end{split}\end{equation*}
Using that $\l_1 \leq \l_2$ and $\l_2 \geq C/\e\sqrt{|\log \e|}$, along with Lemma \ref{lem:propertiesofrho}, we conclude
\begin{equation}\label{eq:errorr01}
|R^1_0| \leq C\e \sqrt{|\log \e|}\frac{1}{\l_1^3}.
\end{equation}
Using again Lemma \ref{lem:propertiesofrho} we obtain
\begin{equation*}\begin{split}
1-\r_1^2 \r_2^2+\r_2^2-1+\r_1^2-1=&1-\r_1^2 +O\left(\frac{1}{\l_2^2}\right)\r_1^2+O\left(\frac{1}{\l_2^2}\right)+\r_1^2-1 \leq C \frac{1}{\l_2}\frac{1}{\l_1},
\end{split}\end{equation*}
and hence
\begin{equation}\label{eq:errorr02}
|R^2_0| \leq C \frac{1}{\l_2 \l_1} \leq C \left(\e|\log \e|^{1/2}\right)^\sigma \frac{1}{\l_1^{2-\sigma}}, \quad \forall \ 0<\sigma<1.
\end{equation}
To see that the previous inequality holds we can distinguish the cases \(2<\l_1<\dd<\l_2\) and \(\dd<\l_1<\l_2\). We remark that we also have
\begin{equation}
\nonumber
|\nabla R_0^1| \leq \frac{C \e \sqrt{|\log \e|}}{\l_1^4}, \ \ \ |\nabla R_0^2|\leq \frac{C (\e \sqrt{|\log \e|})^\sigma}{\l_1^{3-\sigma}}.
\end{equation}
\medskip

\noindent {\it Step 3:} Estimates of $S_1(V_d)$

\noindent We recall that $\r_1:=\r(\l_1)$ and $\r_2:=\r(\l_2)$. Thus $V_d(r,s)=:\r_1\r_2 e^{i(\v_1+\v_2)}$ with
\begin{equation*}\nonumber
\l_1= \sqrt{(r\cos s -\dd)^2+r^2\sin^2s}, \ \ \ \ \ \l_2= \sqrt{(r\cos s+\dd)^2+r^2\sin^2s},
\end{equation*}
\begin{equation*}
e^{i\v_1}= \frac{(r\cos s-\dd)+ir\sin s}{\l_1}, \ \ \ \ e^{i\v_2}= \frac{(r\cos s+\dd)+ir\sin s}{\l_2}. \nonumber
\end{equation*}
We have
\begin{equation*}\begin{split}
\p_sV_d = &\,\left[\p_s \l_1 \r_1'\r_2
+\p_s\l_2 \r_2' \r_1+i \p_s(\v_1+\v_2)\r_1\r_2 \right]e^{i(\v_1+\v_2)} \nonumber  \\
\p^2_{ss}V_d =&\, \Bigl\{ \Bigl[ \p^2_{ss}\l_1 \r_1'\r_2+\p^2_{ss}\l_2\r_2' \r_1 +(\p_s\l_1)^2\r_1''\r_2+ (\p_s\l_2)^2\r_2''\r_1 \nonumber \\
& \,+2\p_s\l_1\p_s\l_2\r_1'\r_2'
-[\p_s(\v_1+\v_2) ]^2\r_1 \r_2 \Bigr] \nonumber \\
& \,+i\Bigl[ 2\p_s(\v_1+\v_2) (\p_s\l_1 \r_1'\r_2+\p_s\l_2\r_2'\r_1)+\p^2_{ss}(\v_1+\v_2)\r_1\r_2\bigr]\Bigr \}e^{i(\v_1+\v_2)},\nonumber
\end{split}\end{equation*}
and thus
\begin{equation*}\begin{split}
\e^2(\p^2_{ss}&V_d-4i\p_sV_d-4V_d)= \e^2\Bigl\{ (\p_s\l_1)^2\r_1''\r_2 \nonumber +(\p_s\l_2)^2\r_2''\r_1+2(\p_s\l_1)(\p_s\l_2) \r'_1\r'_2 \\
&+\p^2_{ss}\l_1\r_1'\r_2+\p^2_{ss}\l_2\r_2'\r_1-([\p_s(\v_1+\v_2)]^2-4\p_s(\v_1+\v_2)+4 )\r_1\r_2  \Bigr\}e^{i(\v_1+\v_2)}  \\
&+i\e^2 \Bigl\{ \p^2_{ss}(\v_1+\v_2)\r_1 \r_2+\left(2[\p_s(\v_1+\v_2)]-4\right)[\p_s\l_1\r_1'\r_2+\p_s\l_2\r_2'\r_1 ]\Bigr\}e^{i(\v_1+\v_2)}.
\end{split}\end{equation*}
We also need to compute the following derivatives,
\begin{equation}\nonumber
\p_s\l_1 =\frac{r\dd \sin s}{\l_1}=\dd \sin \v_1, \ \  \p_s \l_2 = \frac{-r\dd \sin s}{\l_2}=-\dd \sin \v_2,
\end{equation}
\begin{equation*}\begin{split}
\p_{ss}^2 \l_1 =& \,\frac{r\dd \cos s}{\l_1}-\frac{r^2\dd^2\sin^2s}{\l_1^3}=\dd \cos \v_1+\dd^2\frac{\cos^2 \v_1}{\l_1}, \nonumber \\
\p_{ss}^2 \l_2 =&\, \frac{-r\dd \cos s}{\l_2}-\frac{r^2 \dd^2\sin^2s}{\l_2^3}=-\dd \cos \v_2+\dd^2\frac{\cos^2 \v_2}{\l_2}. \nonumber
\end{split}\end{equation*}
Now we can check that
\begin{equation}\nonumber
\p_s\v_1 =1+\frac{\dd}{\l_1^2}(r \cos s-\dd)=1+\frac{\dd\cos \v_1}{\l_1}, \quad
\p_s \v_2 = 1-\frac{\dd}{\l_2^2}(r\cos s+\dd)=1-\frac{\dd \cos \v_2}{\l_2}
\end{equation}
\begin{equation}\begin{split}
\p^2_{ss} \v_1=&\, \frac{-\dd r\sin s}{\l_1^4}(\l_1^2+2\dd(r\cos s-\dd))= \frac{-\dd\sin \v_1}{\l_1}-\frac{2\dd^2\sin \v_1\cos \v_1}{\l_1^2}, \nonumber \\
\p^2_{ss} \v_2 =&\, \frac{\dd r\sin s}{\l_2^4}(\l_2^2-2\dd(r\cos s+\dd))=\frac{\dd \sin \v_2}{\l_2}-\frac{2\dd^2\sin \v_2\cos \v_2}{\l_2^2}, \nonumber
\end{split}\end{equation}
and besides,
$$
[\p_s(\v_1+\v_2)]^2-4\p_s(\v_1+\v_2)+4=\dd^2\left(\frac{\cos \v_1}{\l_1}-\frac{\cos \v_2}{\l_2} \right)^2,$$
$$
\p^2_{ss}(\v_1+\v_2)=\dd \left(\frac{\sin \v_2}{\l_2}-\frac{\sin \v_1}{\l_1}\right)-2\dd^2\left( \frac{\sin \v_1 \cos \v_1}{\l_1^2}+\frac{\sin \v_2\cos \v_2}{\l_2^2} \right),
$$
$$
2\p_s(\v_1+\v_2)-4=2\dd \left(\frac{\cos \v_1}{\l_1}-\frac{\cos \v_2}{\l_2} \right).$$
Hence we obtain
\begin{equation}\begin{split}\label{eq:decompoerror}
S_1(V_d)= &\,\Bigl\{ \frac{\d^2}{|\log \e|} \Bigl(\r_1''\r_2 \sin^2\v_1 +\r_2''\r_1 \sin^2 \v_2 -\r_1'\r_2'\sin \v_1 \sin \v_2  \\
&\qquad+ \frac{\cos^2\v_1}{\l_1} \r_1' \r_2 +\frac{\cos^2 \v_2}{\l_2}\r_2' \r_1-\left(\frac{\cos \t_1}{\l_1}-\frac{\cos \t_2}{\l_2} \right)^2 \r_1 \r_2  \Bigr) \\
&\,\qquad + \frac{\e \d}{\sqrt{|\log \e|}}\Bigl(\cos \v_1 \r_1' \r_2-\cos \v_2 \r_2' \r_1 \Bigr) \\
+i&\Bigl[ \frac{\e \d}{\sqrt{|\log \e|}}\left( \frac{\sin \v_2}{\l_2}-\frac{\sin \v_1}{\l_1} \right)\r_1 \r_2 -\frac{2\d^2}{|\log \e|}\left(\frac{\sin \v_1 \cos \v_1}{\l_1^2}+\frac{\sin \v_2\cos \v_2}{\l_2^2} \right)\r_1\r_2 \\
&\qquad+\frac{2\d^2}{|\log \e|}\left(\frac{\cos \v_1}{\l_1}-\frac{\cos \v_2}{\l_2} \right)\left( \sin \v_1 \r_1' \r_2-\sin \v_2 \r_2' \r_1 \right)  \Big]\Bigr \}e^{i(\v_1+\v_2)}.
\end{split}\end{equation}
The conclusion of the proof follows from \eqref{eq:Errorestimate01}, \eqref{eq:GradErrorestimate01}, \eqref{eq:errorr01}, \eqref{eq:errorr02} and the following Lemma \ref{lem:orthoe}. We also use the symmetry of the problem.
\end{proof}
\begin{lemma}\label{lem:orthoe}
Let $S_1(V_d)=iV_dR_1=iV_d(R_1^1+iR_1^2))$. In the half-plane $\R^+\times \R$ we have that
\begin{equation}\label{eq:estimateEE}
\|S_1(V_d)\|_{L^\infty(\l_1<3)}\leq \frac{C}{|\log \e|}, \quad \|\nabla S_1(V_d)\|_{L^\infty(\l_1<3)} \leq \frac{C}{|\log \e|},
\end{equation}
and for $\l_1>2$:
\begin{equation}\label{eq:estimatesR_1}
|R_1^1| \leq \frac{C}{|\log \e|} \frac{1}{\l_1^2}, \ \ |\nabla R_1^1|\leq \frac{C}{|\log \e|} \frac{1}{\l_1^3}, \quad
|R_1^2| \leq \frac{C}{|\log \e|} \frac{1}{\l_1^2}, \ \ |\nabla R_1^2| \leq \frac{C}{|\log \e|} \frac{1}{\l_1^3}.
\end{equation}
\end{lemma}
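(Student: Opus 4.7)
The plan is to divide the explicit decomposition of $S_1(V_d)$ displayed in \eqref{eq:decompoerror} by $iV_d = i\r_1\r_2 e^{i(\v_1+\v_2)}$ and estimate the real and imaginary parts $R_1^1,R_1^2$ term by term. The ingredients are the far-field asymptotics from Lemma \ref{lem:propertiesofrho}, namely $\r(\l)=1+O(\l^{-2})$, $\r'(\l) = \l^{-3}+O(\l^{-4})$, $\r''(\l)=O(\l^{-4})$, together with the size identities $\e^2\dd^2 = \d^2/|\log\e|$ and $\e^2\dd = \e\d/\sqrt{|\log\e|}$ with $\d=O(1)$, which are precisely the prefactors that appear in \eqref{eq:decompoerror}.

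Near a vortex, when $\l_1<3$, we have $\l_2 \geq \dd - 3 \sim \dd$, so $\r_2 = 1+O(\e^2|\log\e|)$, the derivatives $\r_2',\r_2''$ are even smaller, and $1/\l_2 = O(\e\sqrt{|\log\e|})$, while $\r_1,\r_1',\r_1''$ are uniformly bounded by smoothness of $w$. Substituting these into \eqref{eq:decompoerror}, the $\d^2/|\log\e|$-terms are $O(1/|\log\e|)$ and the $\e\d/\sqrt{|\log\e|}$-terms are $O(\e/\sqrt{|\log\e|})$, which proves \eqref{eq:estimateEE}. Differentiating term by term yields the gradient bound by the same bookkeeping.

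For the decay bounds \eqref{eq:estimatesR_1} in the regime $\l_1>2$, most terms in \eqref{eq:decompoerror} decay at least as $\frac{1}{|\log\e|\l_1^4}$, thanks to the factors $\r'_j=O(1/\l_j^3)$ and $\r''_j=O(1/\l_j^4)$. The delicate contributions require exploiting the exact cancellation
\begin{equation*}
\frac{\cos\v_1}{\l_1}-\frac{\cos\v_2}{\l_2} = \RE\left(\frac{1}{z-\dd}-\frac{1}{z+\dd}\right) = \RE\left(\frac{2\dd}{z^2-\dd^2}\right),
\end{equation*}
and the analogous identity for $\sin\v_j/\l_j$, both of which yield an extra factor of order $\dd/(\l_1\l_2)$. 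In the far regime $\l_1>\dd$ this is $O(\dd/\l_1^2)$, and the squared difference appearing in the real part gains a factor $\dd^2/\l_1^4$, giving bounds of order $\frac{\dd^2}{|\log\e|\l_1^4} \leq \frac{1}{|\log\e|\l_1^2}$. In the intermediate regime $2<\l_1\leq\dd$, where $\l_2\sim\dd$, one uses the pointwise bound $1/\l_1$ and absorbs the missing factor using $\l_1\leq\dd$ together with $\frac{\e\d}{\sqrt{|\log\e|}}\cdot\dd = \frac{\d^2}{|\log\e|}$. A case split against $\l_1\gtrless\dd$ then delivers the required $\frac{1}{|\log\e|\l_1^2}$ decay in both cases. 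The companion term $\frac{\e\d}{\sqrt{|\log\e|}}(\cos\v_1\r_1'\r_2 - \cos\v_2\r_2'\r_1)$ is handled analogously: for $\l_1>\dd$ one combines $|\cos\v_1-\cos\v_2| = O(\dd/\l_1)$ with $|\r_1'-\r_2'| = O(\dd/\l_1^4)$, whereas for $\l_1\leq\dd$ the direct estimate $\r_j' = O(1/\l_j^3)$ already suffices.

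The gradient bounds in \eqref{eq:estimatesR_1} follow from differentiating each term in \eqref{eq:decompoerror}; each derivative contributes an extra factor of order $1/\l_1$ (or $1/\dd$ where $\l_1\sim\dd$), and the case analysis against $\l_1\gtrless\dd$ goes through unchanged. The main obstacle is the bookkeeping of cancellations: without exploiting that $\frac{\sin\v_2}{\l_2}-\frac{\sin\v_1}{\l_1}$ and $\frac{\cos\v_1}{\l_1}-\frac{\cos\v_2}{\l_2}$ carry a built-in factor $\dd/(\l_1\l_2)$ in the far field, and without matching this against the pointwise bounds in the intermediate regime, one would only obtain $O(\e/(\sqrt{|\log\e|}\l_1))$, falling short of the uniform $O(1/(|\log\e|\l_1^2))$ decay required over the whole half-plane.
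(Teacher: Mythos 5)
Your proposal is correct and follows the same overall strategy as the paper: estimate each term in \eqref{eq:decompoerror} using the far-field asymptotics of $w$ from Lemma \ref{lem:propertiesofrho}, isolate the delicate contribution $\frac{\e\d}{\sqrt{|\log\e|}}\bigl(\frac{\sin\t_2}{\l_2}-\frac{\sin\t_1}{\l_1}\bigr)$, and beat it using a hidden cancellation combined with a case split against $\l_1\lessgtr\dd$.

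The one place your argument genuinely diverges from the paper's is the mechanism for the cancellation. You observe
\[
\frac{\cos\t_1}{\l_1}-\frac{\cos\t_2}{\l_2}=\RE\!\left(\frac{1}{z-\dd}-\frac{1}{z+\dd}\right)=\RE\!\left(\frac{2\dd}{z^2-\dd^2}\right),
\qquad
\left|\frac{1}{z-\dd}-\frac{1}{z+\dd}\right|=\frac{2\dd}{\l_1\l_2},
\]
whereas the paper uses the Cartesian identity $\l_1\sin\t_1=\l_2\sin\t_2=r\sin s$ and an explicit Taylor expansion of $\l_1^2/\l_2^2$ in terms of $\dd/\l_1$, which forces them to restrict that expansion to the far regime. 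Your identity is algebraically equivalent but cleaner, and in fact it is strong enough to make your own case split redundant: since we work in the right half-plane we have both $\l_2\geq\l_1$ and $\l_2\geq\dd$, so directly
\[
\frac{\e\d}{\sqrt{|\log\e|}}\left|\frac{\sin\t_2}{\l_2}-\frac{\sin\t_1}{\l_1}\right|
\leq \frac{\e\d}{\sqrt{|\log\e|}}\cdot\frac{2\dd}{\l_1\l_2}
=\frac{2\d^2}{|\log\e|}\cdot\frac{1}{\l_1\l_2}
\leq \frac{2\d^2}{|\log\e|\,\l_1^2}
\]
uniformly over $\l_1>2$, with no distinction between the intermediate and far regimes. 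That is a small simplification that the paper's route does not afford. On the other hand, you treat the ``companion term'' $\frac{\e\d}{\sqrt{|\log\e|}}(\cos\t_1\r_1'\r_2-\cos\t_2\r_2'\r_1)$ and the squared difference $\frac{\d^2}{|\log\e|}\bigl(\frac{\cos\t_1}{\l_1}-\frac{\cos\t_2}{\l_2}\bigr)^2$ with the same cancellation machinery, but neither actually needs it: the first is already $O(\e\d|\log\e|^{-1/2}\l_1^{-3})\ll |\log\e|^{-1}\l_1^{-2}$ by the crude bound $\r_j'=O(\l_j^{-3})$ and $\l_1>2$, and the second is bounded by $\frac{\d^2}{|\log\e|}\cdot\frac{4}{\l_1^2}$ using only $\l_2\geq\l_1$. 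This extra work is harmless but obscures which term truly drives the lemma. The gradient estimates by differentiation and the near-vortex bounds in \eqref{eq:estimateEE} are handled correctly and as in the paper.
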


\begin{proof}
By using Lemma \ref{lem:propertiesofrho} we see that
\begin{equation*}
\|S_1(V_d)\|_{L^\infty(\l_1<3)} \leq \frac{C}{|\log \e|},\qquad \|\nabla S_1(V_d)\|_{L^\infty(\l_1<3)} \leq \frac{C}{|\log \e|}.
\end{equation*}
For $\l_1 > 2$ we have that
\begin{equation*}\begin{split}
-R_1^2=&\frac{\d^2}{|\log \e|} \Bigl(\frac{\r_1''}{\r_1}\sin^2 \theta_1+\frac{\cos^2\theta_1}{\l_1}\frac{\r_1'}{\r_1}-\left(\frac{\cos \t_1}{\l_1}-\frac{\cos \t_2}{\l_2} \right)^2  \Bigr) \\
&\;\;+\frac{\d^2}{|\log \e|} \Bigl( \frac{\r_2''}{\r_2}\sin^2\t_2-\r_1'\r_2' \sin \t_1 \sin \t_2 +\frac{\cos^2\t_2}{\l_2}\frac{\r_2'}{\r_2} \Bigr) \\
&\;\;+\frac{\d \e}{\sqrt{|\log \e|}}(\cos \t_1 \frac{\r_1'}{\r_1}-\cos \t_2 \frac{\r_2'}{\r_2} ).
\end{split}\end{equation*}
By using Lemma \ref{lem:propertiesofrho} and the fact that $\l_2 \geq \l_1>2$ we can see that
\begin{equation*}
\frac{\d^2}{|\log \e|} \Bigl|\frac{\r_1''}{\r_1}\sin^2 \theta_1+\frac{\cos^2\theta_1}{\l_1}\frac{\r_1'}{\r_1}-\left(\frac{\cos \t_1}{\l_1}-\frac{\cos \t_2}{\l_2} \right)^2  \Bigr| \leq \frac{C}{|\log \e|}\frac{1}{\l_1^2},
\end{equation*}
and
\begin{equation*}
\frac{\d \e}{\sqrt{|\log \e|}}\Bigl|\cos \t_1 \frac{\r_1'}{\r_1}-\cos \t_2 \frac{\r_2'}{\r_2} \Bigr| \leq \frac{C \e}{\sqrt{|\log \e|}}\frac{1}{\l_1^3} .
\end{equation*}
Besides by using also that $\l_2 \geq \dd \geq \d/(\e \sqrt{|\log \e|})$ we observe that
\begin{equation*}
\frac{\d^2}{|\log \e|} \Bigl| \frac{\r_2''}{\r_2}\sin^2\t_2-\r_1'\r_2' \sin \t_1 \sin \t_2 +\frac{\cos^2\t_2}{\l_2}\frac{\r_2'}{\r_2} \Bigr| \leq C \frac{\e}{\sqrt{|\log \e|}}\frac{1}{\l_1^2}.
\end{equation*}
Thus we obtain \eqref{eq:estimateEE} and  the third estimate in \eqref{eq:estimatesR_1}. By differentiating we can also obtain the fourth estimate.

Now for $\l_1>2$ we see that
\begin{equation*}\begin{split}
R_1^1=&\frac{\d \e}{\sqrt{|\log \e|}} \left(\frac{\sin \t_2}{\l_2}-\frac{\sin \t_1}{\l_1} \right) -\frac{2\d^2}{|\log \e|}\left(\frac{\sin \t_1 \cos \t_1}{\l_1^2}+\frac{\sin \t_2 \cos \t_2}{\l_2^2} \right) \\
&\;\;+\frac{2\d^2}{|\log \e|}\left[\frac{\cos \t_1 \sin \t_1 \r_1'}{\l_1 \r_1}-\frac{\cos \t_1 \sin \t_2 \r_2'}{\l_1 \r_2}\right] -\frac{2\d^2}{|\log \e|}\frac{\cos \t_2}{\l_2}(\sin \t_1 \frac{\r_1'}{\r_1}-\sin \t_2 \frac{\r_2'}{\r_2}).
\end{split}\end{equation*}
By using Lemma \ref{lem:propertiesofrho} and the fact that $\l_2 \geq \l_1$ we can see that the two first estimates of \eqref{eq:estimatesR_1} hold. Actually to prove the first estimate the only difficult term to handle is
$$\frac{\d \e}{\sqrt{|\log \e|}}\left( \frac{\sin \t_1}{\l_1}-\frac{\sin \t_2}{\l_2}\right).$$
In the region $(\R^+\times \R)\cap \{\l_1<C/(\e\sqrt{|\log \e|}) \}$ we have that $\e\sqrt{|\log \e|}/\l_1 <C/\l_1^2$. By using this and that $\l_2>\l_1$ we find that
\begin{equation}\label{eq:1}
\left|\frac{\dd \e}{\sqrt{|\log \e|}}\r_1\r_2\left( \frac{\sin \t_1}{\l_1}-\frac{\sin \t_2}{\l_2}\right)\right| \leq \frac{C}{|\log \e|} \frac{1}{1+\l_1^2}
\end{equation}
in $(\R^+\times \R)\cap \{\l_1<1/\e\sqrt{|\log \e|} \}$.

Now we use that $\l_1 \sin \t_1 =\l_2 \sin \t_2 =r \sin s$ to obtain that
$$\left( \frac{\sin \t_1}{\l_1}-\frac{\sin \t_2}{\l_2}\right)=\frac{\sin \t_1}{\l_1}\left(1- \frac{\l_1^2}{\l_2^2} \right).$$
But  $\l_2^2=\l_1^2+4\dd r \cos s=\l_1^2+4\dd\l_1 \cos \t_1 +\dd^2$. Thus when $\left| 4\dd\l_1 \cos \t_1 +\dd^2\right| <1$, which is true when $\l_1\geq \frac{C}{\e \sqrt{|\log \e|}}$ for an appropriate  constant $C>0$, we find that
$$\frac{\l_1^2}{\l_2^2}=1+4\frac{\d}{\l_1}\cos \t_1+O\left(\frac{\dd^2}{\l_1^2}\right).$$ Thus
\begin{equation}\label{eq:2}
\left|\frac{\d \e}{\sqrt{|\log \e|}}\r_1\r_2\left( \frac{\sin \t_1}{\l_1}-\frac{\sin \t_2}{\l_2}\right)\right| \leq \frac{C}{|\log \e|}\frac{1}{\l_1^2}
\end{equation}
in $(\R^+\times \R) \cap \{\l_1>C/(\e\sqrt{|\log \e|}) \}$. Combining estimates \eqref{eq:1} and \eqref{eq:2} and differentiating we arrive at the conclusion.
\end{proof}

Recall the polar  coordinates $\rho_j$, $\theta_j$ about $\tilde d_j$ defined by the relation $
z = \rho_j e^{i\theta_j} + \tilde d_j $.
We  can decompose a function \(h\) satisfying \(h(\overline{z})=-\overline{h}(z)\) in Fourier series in $\theta_j$ as
\begin{align}
& h = \sum_{k=0}^\infty h^{k,j} \label{def:h_Fourier}\\
& h^{k,j}(\rho_j,\theta_j) = h_1^{k,j}(\rho_j) \sin (k\theta_j) + i h_2^{k,j}(\rho_j) \cos(k\theta_j), \quad
h_1^{k,j}(\rho_j), h_2^{k,j}(\rho_j) \in \R , \nonumber
\end{align}
and define
\begin{align*}
h^{e,j} = \sum_{k \text{ even}} h^{k,j}, \quad
h^{o,j} = \sum_{k \text{ odd}} h^{k,j} .
\end{align*}

The definitions above can also be expressed by the following. 
Let $\mathcal{R}_j$ denote the reflection  across the line $\RE(z) = \tilde d_j$. Since  $\tilde d_j \in \R$, we have
\begin{equation}\label{def:reflection_R_j}
\mathcal{R}_j z = 2\tilde d_j-\RE(z) + i \IM(z).
\end{equation}

\noindent Then $h^{e,j}$ and $h^{o,j}$ have the symmetries
\begin{align}
\nonumber
h^{o,j} (\mathcal{R}_j z ) =  \overline{h^{o,j}(z)} ,\quad \quad  h^{e,j}(\mathcal{R}_j z ) = -\overline{h^{e,j}(z)} ,
\end{align}
and we can define equivalently
\begin{align}
\label{hoj}
h^{o,j}(z) &= \frac{1}{2}[ h(z) + \overline{h(\mathcal{R}_j z )}] , \quad
h^{e,j}(z) = \frac{1}{2}[ h(z) - \overline{h(\mathcal{R}_j z )}] .
\end{align}

It is convenient to consider a global function  $h^o$ defined as follows.
We introduce cut-off functions $\eta_{j,R}$, as
\begin{align}
\label{def:etajR}
\eta_{j,R}(z) = \eta_1 \Bigl(\frac{|z-\tilde d_j|}{R}\Bigr), 
\end{align}
where $\eta_1:\R\to[0,1]$ is a smooth function such that $\eta_1(t) = 1$ for $t \leq 1$ and $\eta_1(t)=0$ for $t\geq 2$.
Given $h:\mathbb{C}\to \mathbb{C}$ we define
\begin{align}
\label{Reps}
R_\varepsilon = \frac{\alpha_0}{\varepsilon |\log \varepsilon|^{\frac{1}{2}}} ,
\end{align}
where $\alpha_0>0$ is a small fixed constant that ensures $R_\varepsilon \leq \frac{1}{2}\tilde d$ and
\begin{align}
\label{def-ho}
h^o &= \eta_{1,R_\varepsilon} h^{o,1} 
+ \eta_{2,R_\varepsilon} h^{o,2} \\
h^e& =h^o-h. \nonumber
\end{align}

For a complex function \(h=h_1+ih_2\) we introduce the new semi-norm
\begin{align}
|h|_{\sharp\sharp}
= \sum_{j=1}^2 \|  V_d h \|_{C^{0,\alpha}(\rho_j<4)}
+  \sup_{2<\rho_1< R_\varepsilon, \, 2<\rho_2 <R_\varepsilon}
\Bigl[
\frac{|h_1|}{\rho_1^{-1} + \rho_2^{-1}}
+\frac{ | h_2 | }{\rho_1^{-1+\sigma} + \rho_2^{-1+\sigma}}
\Bigr]. \label{def:semi_norm_sharpsharp}
\end{align}
We then have
\begin{proposition}\label{errorProp2}
Let $V_d$ given by \eqref{eq:Approx2-dimensionalrescaled} and denote $S(V_d)=E=iV_d R.$ Then we can write
$$R=R^o+R^e, \quad R^o=R^o_\alpha+R^o_\beta$$
with \(R^o\) defined analogously to \eqref{def-ho} and
\(R^o(\mathcal{R}_jz) =\overline{R^o (z)} \) in \(B_{R_\e}(\dd)\cup B_{R_\e}(-\dd)\)
\begin{align*}
|R^o_\alpha|_{\sharp\sharp} & \leq C \frac{\e}{\sqrt{|\log \e|}}, \quad \|R^o_\beta\|_{**} \leq C\e\sqrt{|\log \e|}, \quad \|R^e\|_{**}+\|R^o\|_{**}\leq \frac{C}{|\log \e|}
\end{align*}
\end{proposition}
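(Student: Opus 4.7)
The strategy is to apply the reflection decomposition \eqref{hoj}--\eqref{def-ho} to the explicit expression $E = S(V_d) = S_0(V_d) + S_1(V_d)$ derived in the proof of Proposition \ref{prop:sizeoferror}, and then to isolate within $R^o$ a piece whose pointwise magnitude is substantially smaller than the global $1/|\log\e|$ bound for $R$. I would first define $R^o$ exactly as in \eqref{def-ho} and set $R^e := R - R^o$; by construction $R^o(\mathcal{R}_j z) = \overline{R^o(z)}$ in each disk $B_{R_\e}(\dd_j)$, and the bounds $\|R^o\|_{**} + \|R^e\|_{**} \leq C/|\log\e|$ are then immediate from Proposition \ref{prop:sizeoferror}, since the reflections $\mathcal{R}_j$ and the smooth cut-offs $\eta_{j,R_\e}$ act boundedly on the norm \eqref{def:norm_**}.

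The heart of the matter is the further splitting $R^o = R^o_\alpha + R^o_\beta$. Writing $R = -iE/V_d$ and inspecting \eqref{S0Vd} and \eqref{eq:decompoerror}, I would separate terms according to the prefactor $\e^2\dd = \e\hat d/\sqrt{|\log\e|}$ versus $\e^2\dd^{\,2} = \hat d^{\,2}/|\log\e|$ in the screw-symmetry part, and according to the gradient cross-term versus the modulus correction in the planar part. Near $\dd_1$, the factor $W^a = w(\l_1)e^{i\theta_1}$ is $\mathcal{R}_1$-odd in the sense $W^a(\mathcal{R}_1 z) = -\overline{W^a(z)}$, while $W^b$ is smooth at scale $\l_1$ and can be Taylor expanded at $z=\dd$. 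Tracking the parity in $\theta_1$ (and symmetrically $\theta_2$) through each term, I would set $R^o_\alpha$ to collect the odd-parity contributions whose pointwise magnitude is $O(\e/\sqrt{|\log\e|})$ but whose decay at infinity is only $\l_j^{-1}$; this is exactly the profile controlled by $|\cdot|_{\sharp\sharp}$ in \eqref{def:semi_norm_sharpsharp}, yielding $|R^o_\alpha|_{\sharp\sharp} \leq C\e/\sqrt{|\log\e|}$.

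The residual $R^o_\beta := R^o - R^o_\alpha$ then consists of odd-parity contributions exhibiting the stronger decay $\l_j^{-2}$ required by $\|\cdot\|_{**}$ (resp.\ $\l_j^{-2+\sigma}$ in the imaginary part). A careful parity count shows that, once the odd-parity projection is taken, the effective magnitude of these terms is $O(\e\sqrt{|\log\e|})$: for instance, the odd part of the gradient cross-term $2(W^a_{x_1}W^b_{x_1}+W^a_{x_2}W^b_{x_2})$ already has this size pointwise by \eqref{eq:errorr01}, and the odd-parity projections of the $\hat d^{\,2}/|\log\e|$ terms in \eqref{eq:decompoerror} gain an extra factor $1/\l_2 \lesssim \e\sqrt{|\log\e|}$ from the leading-order Taylor expansion of $W^b$ at $\dd$. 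This yields $\|R^o_\beta\|_{**} \leq C\e\sqrt{|\log\e|}$.

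The main obstacle will be the parity bookkeeping for the numerous trigonometric terms in \eqref{S0Vd} and \eqref{eq:decompoerror}, and in particular the verification that the odd-parity projections of the apparently large $\hat d^{\,2}/|\log\e|$ contributions genuinely gain a factor $\e\sqrt{|\log\e|}$ after cancellations, so that they can be absorbed into $R^o_\beta$. The analogous analysis near $-\dd$ follows by the reflection symmetries \eqref{eq:eqsymmetriesofpsi} of the problem, and the gluing via $\eta_{j,R_\e}$ in \eqref{def-ho} produces no overlap since $R_\e \leq \dd/2$ by \eqref{Reps}; the near-vortex Hölder bound in $\l_j < 4$ follows from the $L^\infty$ and gradient estimates \eqref{eq:Errorestimate01}, \eqref{eq:GradErrorestimate01}, \eqref{eq:estimateEE} applied to each parity component.
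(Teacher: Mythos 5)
Your overall strategy coincides with the paper's: read off the explicit error from Proposition~\ref{prop:sizeoferror} and \eqref{eq:decompoerror}, apply the reflection decomposition \eqref{hoj}--\eqref{def-ho}, and split $R^o$ by decay rate, allocating the slowly decaying piece to $R^o_\alpha$ (measured in $|\cdot|_{\sharp\sharp}$) and the fast-decaying remainder to $R^o_\beta$ (measured in $\|\cdot\|_{**}$). Your observation that $W^a(\mathcal{R}_1 z)=-\overline{W^a(z)}$ together with Taylor expansion of $W^b$ about $\tilde d$ is exactly the mechanism. The paper simply makes the selection explicit: it collects the $\sin\theta_1/\rho_1$, $\cos\theta_1\cos\theta_2/(\rho_1\rho_2)$, $\cos^2\theta_2/\rho_2^2$ and $\sin\theta_2\cos\theta_2/\rho_2^2$ pieces from \eqref{eq:decompoerror} into functions $r^{o,1},r^{o,2}$, sets $R^{j,o}_\alpha=\tfrac12[r^{o,1}(z)+\overline{r^{o,2}(\mathcal R_jz)}]$, $R^o_\alpha=\eta_{1,R_\e}R^{1,o}_\alpha+\eta_{2,R_\e}R^{2,o}_\alpha$, $R^o_\beta=R^o-R^o_\alpha$, and leaves the remaining verification to the reader, as do you.

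There is, however, a genuine flaw in your justification of $\|R^o_\beta\|_{**}\leq C\e\sqrt{|\log\e|}$. You claim that the odd-parity projections of the $\hat d^{\,2}/|\log\e|$ terms of \eqref{eq:decompoerror} gain a factor $1/\rho_2\lesssim\e\sqrt{|\log\e|}$, and offer this as evidence that $R^o_\beta$ is $\|\cdot\|_{**}$-small. But gaining a $1/\rho_2$ factor from Taylor expansion about $\tilde d_1$ simultaneously slows the decay in $\rho_1$: the leading odd part of $\frac{\hat d^{\,2}}{|\log\e|}\frac{2\cos\theta_1\cos\theta_2}{\rho_1\rho_2}$ is of order $\frac{\hat d\,\e}{\sqrt{|\log\e|}}\frac{\cos\theta_1}{\rho_1}$, decaying only like $\rho_1^{-1}$, while the odd part of $\frac{\hat d^{\,2}}{|\log\e|}\frac{\cos^2\theta_2}{\rho_2^2}$ even grows linearly in $\rho_1$. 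Since the $\|\cdot\|_{**}$ weight on the imaginary part is $\rho_j^{-2+\sigma}$, a term of size $\frac{\e}{\sqrt{|\log\e|}}\rho_1^{-1}$ contributes $\sup_{2<\rho_1<R_\e}\frac{\e}{\sqrt{|\log\e|}}\rho_1^{1-\sigma}\sim\e^\sigma|\log\e|^{-(2-\sigma)/2}$, which is much larger than $\e\sqrt{|\log\e|}$. These leading odd pieces therefore cannot be placed in $R^o_\beta$: they must go into $R^o_\alpha$, where the $\rho_j^{-1+\sigma}$ weight of $|\cdot|_{\sharp\sharp}$ accommodates them — and indeed they appear verbatim in the paper's $r^{o,1}$. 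What is genuinely $\|\cdot\|_{**}$-small of order $\e\sqrt{|\log\e|}$ is only the residual once these are subtracted, together with the odd part of the gradient cross-term, which you treat correctly via \eqref{eq:errorr01}. So the architecture of your split is right, but the argument for $R^o_\beta$ conflates a pointwise factor gain with smallness of $\|\cdot\|_{**}$; that distinction is precisely what the two norms $|\cdot|_{\sharp\sharp}$ and $\|\cdot\|_{**}$ are designed to encode, and it is the reason a two-piece decomposition of $R^o$ is needed at all.
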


\begin{proof}
The conclusion follows using the expression of $S_1(V_d)$ given by \eqref{eq:decompoerror}. More precisely, we define: 

\begin{align*}
r^{o,1} &:=
-i\frac{\d^2}{|\log \e|}\left( \frac{2\cos \theta_1 \cos \theta_2}{\rho_1 \rho_2}+\frac{\cos^2\theta_2}{\rho_2^2} \right) 
+\Bigl\{\frac{\d \e}{\sqrt{|\log \e|}} \frac{-\sin \theta_1}{\rho_1}-\frac{2\d^2}{|\log \e|}\frac{\sin \theta_2 \cos \theta_2}{\rho_2^2} \Bigr\}, \\
r^{o,2} &:=
-i\frac{\d^2}{|\log \e|}\left( \frac{2\cos \theta_1 \cos \theta_2}{\rho_1 \rho_2}+\frac{\cos^2\theta_1}{\rho_1^2} \right) 
+\Bigl\{\frac{\d \e}{\sqrt{|\log \e|}} \frac{+\sin \theta_2}{\rho_2}-\frac{2\d^2}{|\log \e|}\frac{\sin \theta_1 \cos \theta_1}{\rho_1^2} \Bigr\},
\end{align*}
\begin{equation*}
R^{j,o}_\alpha:= \frac{1}{2}[r^{o,1}(z)+\overline{r^{o,2}\left(\mathcal{R}_jz \right)}] \quad j=1,2
\end{equation*}
and
\begin{equation*}
R^o_\alpha:=\eta_{1,R_\e}R^{1,o}_\alpha+\eta_{2,R_\e}R^{2,o}_\alpha,
\end{equation*}
with \(R_\e\) defined by \eqref{Reps}. We can check that \(R^o_\alpha\) and \(R^o_\beta:=R^o-R^o_\alpha\) satisfy the desired properties.
\end{proof}

In the last step of the proof of Theorem \ref{th:main1}, when we aim at canceling the Lyapunov-Schmidt coefficient, we will need the following:

\begin{lemma}\label{lem:ortho_2}
In the region $B(\dd,\dd)$ we have that
\begin{equation*}
S_1(V_d)=\frac{\d}{|\log \e|}W_{x_2 x_2}^a W^b +\frac{\d \e}{\sqrt{|\log \e|}}W_{x_1}^a W^b + G
\end{equation*}
with
\begin{equation*}
\RE \int_{B(\dd,\frac{\d}{\e\sqrt{|\log \e|}})} W_{x_2 x_2}^a \overline{W}_{x_1}^a =0\quad \mbox{ and }\quad
 \RE\int_{B(\dd,\frac{\d}{\e\sqrt{|\log \e|}})} \frac{G}{W^b} \overline{W}_{x_1}^a=O_\e\left(\frac{\e}{\sqrt{|\log \e|}}\right).
 \end{equation*}
\end{lemma}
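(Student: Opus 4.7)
The plan is to: (i) extract the two announced main terms by expanding $\partial^2_{ss}$ in Cartesian form and Taylor-expanding about the vortex center $\dd$; (ii) establish the first orthogonality by a reflection-symmetry argument about that vortex; and (iii) control the remainder $G$ by combining parity cancellations with a Taylor expansion of the smooth factor $W^b$ around $\zeta=0$. Using $\partial_s = x_1\partial_{x_2} - x_2 \partial_{x_1}$, I first compute
\begin{equation*}
\partial^2_{ss} V_d = x_1^2 \partial^2_{x_2 x_2} V_d + x_2^2 \partial^2_{x_1 x_1} V_d - 2 x_1 x_2 \partial^2_{x_1 x_2} V_d - x_1 \partial_{x_1} V_d - x_2 \partial_{x_2} V_d,
\end{equation*}
and, writing $x_1 = \dd + \RE\zeta$, $x_2 = \IM\zeta$ with $\zeta = z-\dd$, identify in $B(\dd,\dd)$ the two leading contributions $\e^2 \dd^2 \partial^2_{x_2 x_2} V_d$ (of order $1/|\log \e|$) and $-\e^2 \dd\,\partial_{x_1} V_d$ (of order $\e/\sqrt{|\log \e|}$). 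Expanding $V_d = W^a W^b$ by Leibniz and retaining only the terms where the derivatives fall on $W^a$ recovers the two main terms in the statement. All other contributions---Leibniz remainders involving derivatives of $W^b$, subleading $\RE\zeta$ corrections, the cross terms $\e^2 x_2^2 \partial^2_{x_1 x_1} V_d$, $-2\e^2 x_1 x_2 \partial^2_{x_1 x_2} V_d$, $-\e^2 x_2 \partial_{x_2} V_d$, and the blocks $-4i\e^2\partial_s V_d$ and $-4\e^2 V_d$---are absorbed into $G$.

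For the first orthogonality, I translate to $\zeta = z - \dd$ so the disk becomes $B(0,\dd)$, invariant under the reflection $\sigma:\zeta \mapsto -\overline\zeta$. The standard identity $W(-\overline\zeta) = -\overline{W(\zeta)}$ (which follows from $W(\zeta)=w(|\zeta|)e^{i\arg\zeta}$) yields upon differentiation
\begin{equation*}
W_{x_1}(-\overline\zeta) = \overline{W_{x_1}(\zeta)}, \qquad W_{x_2 x_2}(-\overline\zeta) = -\overline{W_{x_2 x_2}(\zeta)},
\end{equation*}
so that $\RE[W_{x_2 x_2}\overline{W_{x_1}}]$ is $\sigma$-odd. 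The integral therefore vanishes by symmetry of the domain.

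For the $G$-estimate, I would pair each residual piece of $G$ with $\overline{W_{x_1}^a}/W^b$ and integrate over $B(0,\dd)$, exploiting three ingredients: (a) the $\sigma$-parity of each factor, which kills integrands with $\sigma$-odd real part; (b) the Taylor expansions $W^b(\zeta) = W^b(0) + O(|\zeta|/\dd)$ and $1/W^b(\zeta) = 1/W^b(0) + O(|\zeta|/\dd)$, together with the smallness $|W^b_{x_j}(0)| = O(1/\dd)$, which extract the $1/\dd = O(\e\sqrt{|\log \e|})$ gain from every $W^b$-derivative or Taylor-remainder factor; (c) refined asymptotics of $w$ and $w'$ coming from Lemma \ref{lem:propertiesofrho}, which exhibit key cancellations such as the radial expression $\IM[W_{x_2}\overline{W_{x_1}}] = A(|\zeta|)^2 - |B(|\zeta|)|^2$ decaying like $1/|\zeta|^4$ at infinity---two orders faster than the naive size suggests. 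The principal obstacle is that a crude $L^\infty \cdot L^1$ estimate yields only $O(1/|\log \e|)$, which is off by a full factor of $|\log \e|$; reaching the sharper $O(\e/\sqrt{|\log \e|})$ therefore requires, for every residual term, either a vanishing via $\sigma$-antisymmetry or an extra $\e\sqrt{|\log \e|}$ (equivalently $|\zeta|/\dd$) factor from the $W^b$-Taylor expansion, combined with the fast-decay cancellations needed to keep the weighted integral bounded rather than logarithmically divergent.
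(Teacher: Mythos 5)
Your step (ii) is correct: the reflection $\sigma\colon\zeta\mapsto-\overline\zeta$ satisfies $W_{x_1}^a(\sigma\zeta)=\overline{W_{x_1}^a(\zeta)}$ and $W_{x_2x_2}^a(\sigma\zeta)=-\overline{W_{x_2x_2}^a(\zeta)}$, so $\RE[W_{x_2x_2}^a\overline{W}_{x_1}^a]$ is $\sigma$-odd and integrates to zero over $B(0,\dd)$. This matches the paper.

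Step (i), however, contains a genuine gap that propagates into (iii). You extract the second main term from $-\varepsilon^2 x_1\partial_{x_1}V_d\approx-\varepsilon^2\tilde d\,\partial_{x_1}V_d$ and, after Leibniz, claim this gives $\frac{\hat d\varepsilon}{\sqrt{|\log\varepsilon|}}W_{x_1}^aW^b$. But the sign is wrong: $-\varepsilon^2\tilde d\,\partial_{x_1}V_d=-\frac{\hat d\varepsilon}{\sqrt{|\log\varepsilon|}}W_{x_1}^aW^b+\cdots$, whereas the lemma (and the paper's exact polar-coordinate expansion \eqref{eq:decompoerror}, from which the decomposition is read off directly) has the opposite sign. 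This is not a cosmetic discrepancy: the pairing $\RE\int W_{x_1}^aW^b\cdot\overline{W}_{x_1}^a/W^b=\int|W_{x_1}^a|^2\sim\pi|\log\varepsilon|$ is the dominant quantity in the reduced problem; with your sign the constant $a_1$ in the last section would be negative and the intermediate-value argument for $\hat d_0$ would collapse. The flip comes precisely from the pieces you dismiss as "subleading $\RE\zeta$ corrections." The terms $2\varepsilon^2\tilde d\,(\RE\zeta)\,\partial^2_{x_2x_2}V_d$ (from $x_1^2\partial^2_{x_2x_2}$) and $-2\varepsilon^2\tilde d\,(\IM\zeta)\,\partial^2_{x_1x_2}V_d$ (from $-2x_1x_2\partial^2_{x_1x_2}$) are not negligible when paired with $\overline{W}_{x_1}^a/W^b$: the integrands $\RE\zeta\,\RE[W_{x_2x_2}^a\overline{W}_{x_1}^a]$ and $\IM\zeta\,\RE[W_{x_1x_2}^a\overline{W}_{x_1}^a]$ decay only like $1/|\zeta|^2$, so the $B(0,\dd)$-integrals grow like $|\log\varepsilon|$, and after multiplication by $\varepsilon^2\tilde d$ they each contribute $\sim\pi\hat d\varepsilon\sqrt{|\log\varepsilon|}$---the \emph{same} size as the main term, and in fact they are exactly what restores the sign. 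None of the ingredients you list in (iii) controls them: $\RE\zeta\,\RE[W_{x_2x_2}^a\overline{W}_{x_1}^a]$ is $\sigma$-even so parity gives nothing; $W^b$ is essentially constant here so the $W^b$-Taylor gain is absent; and the $1/|\zeta|^4$ cancellation you invoke does not occur for these quartic trigonometric integrands.

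The paper avoids this trap entirely. Its proof is a direct bookkeeping of the exact polar-coordinate identity \eqref{eq:decompoerror}, in which the combination $[\partial_s(\theta_1+\theta_2)]^2-4\partial_s(\theta_1+\theta_2)+4=[\partial_s(\theta_1+\theta_2)-2]^2$ is manifestly $O(\tilde d^2)$, because $\partial_s\theta_1$ and $\partial_s\theta_2$ each contribute a $+1$ that together cancel the $-4$ from $-4i\partial_s-4$. A Cartesian Taylor expansion about $\tilde d$ cannot see this cancellation cleanly: the "$+2$" is split between the two vortex phases, and the crucial compensating pieces are exactly the Leibniz terms and $\RE\zeta,\IM\zeta$ corrections you discard. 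Once the two main terms are identified correctly from \eqref{eq:decompoerror} (note incidentally that the coefficient of $W_{x_2x_2}^aW^b$ in the lemma statement should read $\hat d^2/|\log\varepsilon|$, as in the displayed proof), every summand in the explicit $G$ carries either a derivative of $W^b$ (gaining a factor $O(1/\tilde d)$) or a factor $1/\lambda_2\lesssim\varepsilon\sqrt{|\log\varepsilon|}$, and the integral estimate becomes elementary.
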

\begin{proof}
It suffices to observe that
\begin{equation*}\begin{split}
S_1(V_d)=E_1=&\,\frac{\d^2 \r_2e^{i(\v_1+\v_2)}}{|\log \e|} \left(\r_1''\sin^2\v_1+\cos^2\v_1\left(\frac{\r_1'}{\l_1}-\frac{\r_1}{\l_1^2}\right)+2i \cos \v_1 \sin \v_1 \left(\frac{\r'_1}{\l_1}-\frac{\r_1}{\l_1^2}\right) \right) \nonumber \\
&\,+ \frac{\d \e}{\sqrt{|\log \e|}} \left(\r'_1\r_2\cos \v_1-i\r_1\r_2\frac{\sin \v_1}{\l_1}\right) e^{i(\v_1+\v_2)}+G, \nonumber
\end{split}\end{equation*}
where
\begin{equation*}\begin{split}
G:=&\, \frac{\d \e e^{i(\t_1+\t_2)}}{\sqrt{|\log \e|}}\r_1\r_2'\cos \t_2  + \frac{\d^2 e^{i(\v_1+\v_2)}}{|\log \e|} \big( \r_2'' \r_1\sin^2\v_2+\r_1'\r_2'\sin \v_1 \sin \v_2 +\frac{\cos^2\v_2}{\l_2}\r_2'\r_1  \\
&+\left( \frac{2\cos \v_1 \cos \v_2}{\l_1 \l_2}+\frac{\cos^2\v_2}{\l_2^2} \right)\r_1 \r_2 \big) +ie^{i(\t_1+\t_2)}\frac{\d \e}{\sqrt{|\log \e|}}\frac{\sin \t_2}{\l_2}\r_1 \r_2\\
&-ie^{i(\v_1+\v_2)}\Bigl\{ \frac{2\d^2}{|\log \e|}\frac{\sin \v_2 \cos \v_2}{\l_2^2}\r_1 \r_2+\frac{2\d^2}{|\log \e|}\frac{\cos \v_2}{\l_2}(\sin \v_1 \r_1'\r_2+\sin \v_2\r_2'\r_1)\\
&+\frac{2\d^2}{|\log \e|}\frac{\cos \t_1}{\l_1}\sin \t_2 \r'_2 \r_1 \Bigr\}.
\end{split}\end{equation*}
\end{proof}

\section{A projected linear problem}

Given $h$ satisfying the symmetries \eqref{eq:eqsymmetriesofpsi} and appropriate decay, our aim in this section is to solve the linear equation
\begin{align}
\label{eq:linear}
\left\{
\begin{aligned}
& \L^\e(\psi)=h+c\sum_{j=1}^2\frac{\chi_j}{iW(z-\dd_j)} (-1)^jW_{x_1}(z-\dd_j)
\quad\text{in }\R^2
\\
& \RE\int_{B(0,4)} \chi \overline{\phi_j}W_{x_1}=0, \text{ with }\phi_j(z)=iW(z)\psi(z+\dd_j)\\
&\psi \text{ satisfies the symmetry }  \eqref{eq:eqsymmetriesofpsi}.
\end{aligned}
\right.
\end{align}
where 
\begin{equation}
\nonumber
\chi(z):=\eta_1\left(\frac{|z|}{2} \right), \ \ \ \chi_j(z):=\eta_1\left(\frac{\l_j}{2}\right)=\eta_1\left(\frac{|z-\dd_j|}{2} \right)
\end{equation}
with $\eta_1$ a smooth cut-off function such that $\eta_1(t)=1$ if $t\leq 1$ and $\eta_1(t)=0$ if $t\geq2$.

Thanks to the symmetries imposed on $\psi$ and $h$ it suffices to use one reduced parameter $c$ and not six as it should be the case  when working with two vortices, since the linearized operator around each has three elements in its kernel.

In order to find estimates on the solution of \eqref{eq:linear} we introduce some norms, for which we use the following notation. Let $\dd_j$, $j=1,2$ denote the center of each vortex as in \eqref{defTildeDj}.
We recall that $(\rho_j,\theta_j)$ are polar coordinates around $\dd_j$, that is, $
z = \rho_j e^{i\theta_j} + \dd_j $.

We will define two sets of norms.
The first one is the following:
given $\alpha , \sigma \in (0,1)$ and
$\psi:\mathbb C\to\mathbb C$ we define
\begin{align*}
\|\psi\|_* = \sum_{j=1}^2 \| V_d \psi \|_{C^{2,\alpha}(\rho_j<3)} + \| \RE(\psi) \|_{1,*} + \| \IM(\psi) \|_{2,*} 
\end{align*}
where, with \(\RE \psi=\psi_1, \IM \psi=\psi_2\),
\begin{align*}
\|\psi_1\|_{1,*} 
&= 
\sup_{\rho_1>2,\rho_2>2}
|\psi_1|
+\sup_{2<\rho_1<\frac{2}{\varepsilon} , \, 2<\rho_2<\frac{2}{\varepsilon}}
\frac{|\nabla \psi_1| }{\rho_1^{-1}+\rho_2^{-1}}
+ \sup_{r>\frac{1}{\varepsilon}}
\Bigl[
\frac{1}{\varepsilon}
|\partial_r \psi_1|
+ |\partial_s \psi_1|
\Bigr]
\\
& \qquad 
+\sup_{2<\rho_1<R_\varepsilon , \, 
2<\rho_2< R_\varepsilon }
\frac{|D^2 \psi_1| }{\rho_1^{-2}+\rho_2^{-2}}
\\
& \qquad 
+\sup_{2<|z-\tilde d_1|<R_\varepsilon , \, 
2<|z-\tilde d_2|<R_\varepsilon} 
\frac{[D^2\psi_1]_{\alpha,B_{|z|/2}(z)}}{ |z-\tilde d_1|^{-2-\alpha} +  |z-\tilde d_2|^{-2-\alpha}}
\\
\|\psi_2\|_{2,*} 
&=
\sup_{\rho_1>2,\rho_2>2}
\frac{|\psi_2|}{\rho_1^{-2+\sigma}+\rho_2^{-2+\sigma}+\varepsilon^{\sigma-2}}
+\sup_{2<\rho_1<\frac{2}{\varepsilon} , \, 2<\rho_2<\frac{2}{\varepsilon}}
\frac{|\nabla \psi_2| }{\rho_1^{-2+\sigma}+\rho_2^{-2+\sigma}}
\\
& \qquad 
+\sup_{r>\frac{1}{\varepsilon}}
\left[ 
\varepsilon^{\sigma-2}|\partial_r \psi_2|
+\varepsilon^{\sigma-1}|\partial_s \psi_2|
\right]
\\
& \qquad 
+\sup_{2<\rho_1<R_\varepsilon ,
 \, 2<\rho_2< R_\varepsilon }
\frac{|D^2 \psi_2| }{\rho_1^{-2+\sigma}+\rho_2^{-2+\sigma}}
\\
& \qquad 
+\sup_{2<|z-\tilde d_1|<R_\varepsilon , \, 
2<|z-\tilde d_2|<R_\varepsilon} 
\frac{[D^2\psi_2]_{\alpha,B_{1}(z)}}{ |z-\tilde d_1|^{-2+\sigma} +  |z-\tilde d_2|^{-2+\sigma}} .
\end{align*}
Here we have used the notation \eqref{def:Holder_norm}-\eqref{def:Ckalpha_norm}. We recall also that the norm for the right hand side $ h $ of \eqref{eq:linear} is defined by \eqref{def:norm_**}. One of the main results in this section is the following.
\begin{proposition}\label{prop:linearfull}
If $h$ satisfies \eqref{eq:eqsymmetriesofpsi} and $\|h\|_{**}<+\infty$ then for $\e>0$ sufficiently small there exists a unique solution $\psi=T_\e(h)$ to \eqref{eq:linear} with $\|\psi\|_*<\infty$.
Furthermore, there exists a constant $C>0$ depending only on  $\alpha, \sigma \in(0,1)$ such that this solution satisfies
\begin{equation*}
\|\psi\|_*\leq C\|h\|_{**}.
\end{equation*}
\end{proposition}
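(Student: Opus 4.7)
The plan is the classical contradiction/blow-up argument in weighted spaces, followed by a separate existence step using Fredholm theory plus approximation. The a priori bound $\|\psi\|_*\le C\|h\|_{**}$ is the core of the proof; given such an estimate, existence follows in a standard way by truncating the problem to a large ball $B_R$ with, say, Dirichlet boundary conditions, invoking the Fredholm alternative (the symmetries \eqref{eq:eqsymmetriesofpsi} and the orthogonality condition cut the three-dimensional kernel of $L^0$ at each vortex down to the single mode $W_{x_1}$ that is absorbed into the projection coefficient $c$), and letting $R\to\infty$ with the a priori bound preserving compactness.

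For the a priori estimate I argue by contradiction: suppose there are sequences $\varepsilon_n\to 0$, $h_n$ with $\|h_n\|_{**}\to 0$, $c_n\in\R$, and $\psi_n$ solving \eqref{eq:linear} with $\|\psi_n\|_*=1$. The first step is to show $c_n\to 0$. Testing the equation in the translated variable $\tilde z=z-\tilde d_j$ against $\chi(\tilde z)\,\overline{W_{x_1}(\tilde z)}$ and using \eqref{eq:defL_j} together with the fact that $L^0(W_{x_1})=0$, the integration by parts produces a term proportional to $c_n$ (times a nonzero multiple of $\int_{B(0,2)}\chi|W_{x_1}|^2$), while the remaining terms are either $o(1)$ because of $\|h_n\|_{**}\to 0$, or of order $o(1)\|\psi_n\|_*$ via \eqref{eq:estimatesonalpha_j}. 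The orthogonality condition $\RE\int_{B(0,4)}\chi\overline{\phi_{j,n}}W_{x_1}=0$ eliminates the only remaining $O(1)$ contribution, yielding $c_n\to 0$.

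Next I carry out the inner analysis near each vortex. Setting $\phi_{j,n}(\tilde z):=iW(\tilde z)\psi_n(\tilde z+\tilde d_j)$, the norm $\|\psi_n\|_*=1$ gives a uniform $C^{2,\alpha}$ bound on $\phi_{j,n}$ on compact sets. Using \eqref{eq:defL_j} and the estimates \eqref{eq:estimatesonalpha_j} on $\alpha_j$, the right-hand side of $L_j^{\varepsilon_n}(\phi_{j,n})$ tends to $0$ in $C^\alpha$ on compacts, so by Schauder estimates a subsequence converges in $C^2_{\mathrm{loc}}$ to a limit $\phi_j^\infty$ solving $L^0(\phi_j^\infty)=0$ in $\R^2$. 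The growth control from $\|\psi_n\|_*$ shows that $\phi_j^\infty$ has at most mild growth at infinity, and the symmetries \eqref{eq:eqsymmetriesofpsi} plus the orthogonality $\RE\int\chi\overline{\phi_j^\infty}W_{x_1}=0$ eliminate all kernel elements of $L^0$ in the admissible class (the translation modes $W_{x_1},W_{x_2}$ and the rotation $iW$), so $\phi_j^\infty\equiv 0$; in particular $\|V_d\psi_n\|_{C^{2,\alpha}(\rho_j<3)}\to 0$.

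Finally I do the outer analysis in $\{\rho_1,\rho_2>2\}$, where I write $\psi_n=\psi_{n,1}+i\psi_{n,2}$ and use \eqref{calLeps}. The imaginary part of the equation contains the coercive zeroth order term $-2|V_d|^2\psi_{n,2}$; together with the known smallness on $\rho_j=3$ from the inner step, a weighted maximum principle with barrier $\rho_1^{-2+\sigma}+\rho_2^{-2+\sigma}$ (modified near $r=1/\varepsilon$ to absorb the $\varepsilon^2$ terms) yields the decay of $\psi_{n,2}$ and its derivatives in the required norm. For the real part $\psi_{n,1}$, the equation is of divergence form with no zeroth order term, so $\psi_{n,1}$ itself need not decay, only its derivatives; here I compare with solutions of a model Laplace equation, constructing sub/supersolutions adapted to the norm $\|\cdot\|_{1,*}$ whose gradients decay like $\rho_j^{-1}$ and whose Hessians like $\rho_j^{-2}$. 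Combining the outer decay with the inner convergence and with $c_n\to 0$ gives $\|\psi_n\|_*\to 0$, contradicting $\|\psi_n\|_*=1$.

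The main obstacle is the outer step: the phase $\psi_{n,1}$ does not decay while its gradient must, so one cannot simply use a pointwise maximum principle on $\psi_{n,1}$ itself. One has to handle simultaneously (i) the coupling between $\psi_1$ and $\psi_2$ in the transport term $2\nabla V_d\cdot\nabla\psi/V_d$, whose phase part is geometrically nontrivial, and (ii) the transition at $r\sim 1/\varepsilon$, where the $\varepsilon^2(\partial_{ss}^2-4i\partial_s-4)$ terms stop being small perturbations and effectively turn the equation into a Helmholtz-type problem; the weighted norm is designed to accommodate exactly this transition, and matching the intermediate-scale decay with the very-far-field behavior is the most delicate book-keeping in the argument.
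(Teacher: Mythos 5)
Your proposal is correct and follows essentially the same route as the paper: a blow-up/contradiction argument for the a priori estimate, with inner analysis passing to a kernel element of $L^0$ that is killed by symmetry plus orthogonality, outer analysis via barriers treating $\psi_1$ and $\psi_2$ separately (handling the $\varepsilon^2\partial_{ss}^2$ terms with polar barriers), and existence via Fredholm alternative on balls $B_M$ together with an a priori bound on the projection coefficient $c$ obtained by testing against $\overline{W}_{x_1}$. The only organizational difference is that you fold the control of $c_n$ into the contradiction argument, whereas the paper first proves the a priori bound for the unprojected equation (Lemma~\ref{FirstEstimate}) and then bounds $c$ separately when establishing Fredholm uniqueness on bounded domains.
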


The proof of Proposition~\ref{prop:linearfull} is in \S\ref{sec:prop1}.

Although the existence and estimate in Proposition~\ref{prop:linearfull} are sufficient to solve a non-linear projected problem, the estimates for $\psi$ are too weak to enable us to solve the reduced problem. This means that they are too weak to justify that we can choose the parameter \(d\) such that the Lyapunov-Schmidt coefficient \(c\) in \eqref{eq:linear} vanishes. 
In order to address this difficulty we use that the largest part of the error and $\psi$ have a symmetry that makes them orthogonal to the kernel. To state the extra (partial) symmetry involved, let us consider $\psi:\mathbb C\to \mathbb C$. 
Recall the polar  coordinates $\rho_j$, $\theta_j$ about $\tilde d_j$ defined by the relation $
z = \rho_j e^{i\theta_j} + \tilde d_j $.
We  can decompose \(\psi\) in Fourier series in $\theta_j$ as in \eqref{def:h_Fourier}
and define
\begin{align*}
\psi^{e,j} = \sum_{k \text{ even}} \psi^{k,j}, \quad
\psi^{o,j} = \sum_{k \text{ odd}} \psi^{k,j}.
\end{align*}

The intuitive idea is that $\psi^{o,j}$ is not orthogonal to the kernel near $\dd_j$ but small, while $\psi^{e,j}$ is large but orthogonal to the kernel near $\dd_j$ by symmetry.
With \(\mathcal{R}_j\) defined in \eqref{def:reflection_R_j}, we have
\begin{align}
\nonumber
\psi^{o,j} (\mathcal{R}_j z ) &=  \overline{\psi^{o,j}(z)} ,\quad 
\psi^{e,j}(\mathcal{R}_j z ) = - \overline{\psi^{e,j}(z)} ,
\end{align}
and we can define equivalently
\begin{align}
\label{psiejoj}
\psi^{o,j}(z) &= \frac{1}{2}[ \psi(z) + \overline{\psi(\mathcal{R}_j z )}] , \quad
\psi^{e,j}(z) = \frac{1}{2}[ \psi(z) - \overline{\psi(\mathcal{R}_j z )}] .
\end{align}

It is convenient to consider a global function  $\psi^o$ defined as follows: with \(R_\e\) given by \eqref{Reps} and \(\eta_{j,R}\) defined in \eqref{def:etajR} we set

\begin{align}
\label{def-psio}
 \psi^o &= \eta_{1,\frac{1}{2}R_\varepsilon} \psi^{o,1} 
+ \eta_{2,\frac{1}{2}R_\varepsilon} \psi^{o,2}  ,
\end{align}
That is, $ \psi^o $ represents the {\em  odd} part of $\psi$ around each vortex $\dd_j$, localized with a cut-off function.

The part of $\psi$ that will be small, namely $\psi^o$ will be estimated in norms that allow for growth up to a certain distance. We do this because that part arises from terms in the error $R^o$ that are small, but decay slowly. To capture these behaviors we define
\begin{align*}
| \psi |_\sharp &= 
\sum_{j=1}^2 
|\log\varepsilon|^{-1}
\| V_d \psi \|_{C^{2,\alpha}(\rho_j<3)}
+ |\RE(\psi)|_{\sharp,1}+ |\IM(\psi)|_{\sharp,2} ,
\end{align*}
where
\begin{align}
\label{normSharp1}
|\psi_1|_{\sharp,1}
& =
\sup_{2<\rho_1 < R_\varepsilon , 2<\rho_2 < R_\varepsilon}
\Bigl[
\frac{|\psi_1|}{\rho_1 \log(2R_\varepsilon/\rho_1)+ \rho_2 \log(2R_\varepsilon/\rho_2)} 
+\frac{|\nabla \psi_1|}{ \log(2R_\varepsilon/\rho_1)+  \log(2R_\varepsilon/\rho_2)} \Bigr]
\\
\label{normSharp2}
|\psi_2|_{\sharp,2} 
&=
\sup_{2<\rho_1 < R_\varepsilon , 2<\rho_2 < R_\varepsilon}
\Bigl[  \frac{|\psi_2|+|\nabla \psi_2|}{\rho_1^{-1+\sigma} + \rho_2^{-1+\sigma}+\rho_1^{-1} \log(2R_\varepsilon/\rho_1) + \rho_2^{-1} \log(2R_\varepsilon/\rho_2)}
\Bigr] ,
\end{align}
and we recall
\begin{align*}
|h|_{\sharp\sharp}
= \sum_{j=1}^2 \|  V_d h \|_{C^{0,\alpha}(\rho_j<4)}
+  \sup_{2<\rho_1< R_\varepsilon, \, 2<\rho_2 <R_\varepsilon}
\Bigl[
\frac{|h_1|}{\rho_1^{-1} + \rho_2^{-1}}
+\frac{ | h_2 | }{\rho_1^{-1+\sigma} + \rho_2^{-1+\sigma}}
\Bigr] .
\end{align*}

%
%

\begin{proposition}
\label{prop:sharp2b}
Suppose that $h$ satisfies the symmetries \eqref{eq:eqsymmetriesofpsi}
and  $\|h \|_{**}<\infty$.
Suppose furthermore that  $h^o $ defined by \eqref{def-ho} is decomposed as $ h^o = h^o_\alpha + h^o_\beta$ where $|  h^o_\alpha |_{\sharp\sharp}<\infty$ 
and $h^o_\alpha$, $h^o_\beta$ satisfy 
\begin{align}
\nonumber
h^o_k(\mathcal{R}_j z ) =  \overline{h^o_k(z)}  ,\quad 
|z-\tilde d_j| <  R_\varepsilon, \quad j=1,2 \ k=\alpha,\beta,
\end{align}
and have support in $B_{2R_\varepsilon}(\dd_1) \cup B_{2R_\varepsilon}(\dd_2) $.
Let us write $\psi = \psi^e + \psi^o$ with $\psi^o$ defined by \eqref{def-psio}. Then $\psi^o$ can be decomposed as $\psi^o = \psi^o_\alpha + \psi^o_\beta$, with each function supported in 
$B_{R_\varepsilon}(\dd_1) \cup B_{R_\varepsilon}(\dd_2) $ 
and satisfying 
\begin{align}
\label{est:prop5.3-1}
|\psi_\alpha^o|_\sharp 
& \lesssim
|h_\alpha^o |_{\sharp\sharp} 
+ \varepsilon | \log\varepsilon|^\frac{1}{2} ( \|h_\alpha^o\|_{**}  
+  
\|  h - h^o\|_{**} )
\\
\label{est:prop5.3-2}
\| \psi_\beta^o \|_* 
&\lesssim
 \|h^o_\beta\|_{**},  
 \\
 \|\psi_\alpha^o\|_*+\|\psi_\beta^o\|_* & \lesssim \|h\|_{**}+\|h_\alpha^o\|_{**}+\|h_\beta^o\|_{**} \nonumber
\end{align}

and
\begin{align}
\nonumber
\psi^o_k(\mathcal{R}_j z ) =  \overline{\psi^o_k(z)}  ,\quad 
|z-\tilde d_j| <  R_\varepsilon, \quad j=1,2,\ k=\alpha,\beta.
\end{align}
\end{proposition}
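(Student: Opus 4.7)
The plan is to construct $\psi^o_\alpha$ and $\psi^o_\beta$ as solutions of two auxiliary linear problems with right-hand sides $h^o_\alpha$ and $h^o_\beta$ respectively, and then verify that their sum agrees with the odd part $\psi^o$ of the function produced by Proposition~\ref{prop:linearfull}. The easier piece is $\psi^o_\beta$: because $h^o_\beta$ satisfies \eqref{eq:eqsymmetriesofpsi} (combining the odd reflection symmetry across $\RE(z)=\tilde d_j$ with the symmetries already enjoyed by $h$) and is supported in $B_{2R_\e}(\tilde d_1)\cup B_{2R_\e}(\tilde d_2)$, Proposition~\ref{prop:linearfull} applied to $h^o_\beta$ produces a function with $\|\cdot\|_*$-norm controlled by $\|h^o_\beta\|_{**}$; multiplying by $\eta_{j,R_\e/2}$ (with a small corrective term coming from a second application of Proposition~\ref{prop:linearfull} to reabsorb the commutator error) yields \eqref{est:prop5.3-2}.

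The heart of the argument is $\psi^o_\alpha$, for which the $\|\cdot\|_{**}$-based estimate of Proposition~\ref{prop:linearfull} is too crude. Near each vortex $\tilde d_j$, change variable to $\tilde z=z-\tilde d_j$; by \eqref{eq:defL_j}--\eqref{eq:estimatesonalpha_j}, the inner operator $L_j^\e$ is an $O(\e\sqrt{|\log\e|})$ perturbation of the standard Ginzburg--Landau vortex linearization $L^0$ defined in \eqref{L0}. The oddness of $h^o_\alpha$ around $\tilde d_j$ forces the right-hand side to contain only odd Fourier modes $k\ge 1$ in $\theta_j$; the $k=1$ component is handled by the orthogonality projection already built into \eqref{eq:linear}, while the modes $k\ge 3$ enjoy a coercivity estimate for $L^0$. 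A mode-by-mode analysis using the explicit radial Green function of $L^0$ yields an inner solution $\psi_j^{\mathrm{in}}$ on $\{\rho_j<R_\e\}$ satisfying
\begin{equation*}
|\psi_j^{\mathrm{in}}|_\sharp \;\lesssim\; |h^o_\alpha|_{\sharp\sharp},
\end{equation*}
the logarithmic growth allowed by \eqref{normSharp1}--\eqref{normSharp2} being exactly the Green-function response to right-hand sides of size $\rho_j^{-1}$ or $\rho_j^{-1+\sigma}$.

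We then set $\psi^o_\alpha := \eta_{1,R_\e/2}\psi_1^{\mathrm{in}}+\eta_{2,R_\e/2}\psi_2^{\mathrm{in}}$ and examine $\mathcal L^\e\psi^o_\alpha - h^o_\alpha$. Three kinds of error appear: (i) the defect $(\mathcal L^\e-L_j^\e)\psi_j^{\mathrm{in}}$, bounded in $\|\cdot\|_{**}$ by $\e\sqrt{|\log\e|}\,|h^o_\alpha|_{\sharp\sharp}$ via \eqref{eq:estimatesonalpha_j}; (ii) the commutators $[\mathcal L^\e,\eta_{j,R_\e/2}]\psi_j^{\mathrm{in}}$, supported in the annulus $\{R_\e/2<\rho_j<R_\e\}$, where the growth permitted by $|\cdot|_\sharp$ is compensated using $\e R_\e\sqrt{|\log\e|}=O(1)$ from \eqref{Reps}; and (iii) cross contributions from the other vortex and a mismatch with $h-h^o$ on the cut-off annulus. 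Summing these, the leftover error is of order $\e\sqrt{|\log\e|}\bigl(\|h^o_\alpha\|_{**}+\|h-h^o\|_{**}\bigr)$ in the $\|\cdot\|_{**}$ norm, and a final application of Proposition~\ref{prop:linearfull} produces a correction reabsorbed into $\psi^o_\alpha$, establishing \eqref{est:prop5.3-1}. The conjugate-reflection symmetry $\psi^o_k(\mathcal R_j z)=\overline{\psi^o_k(z)}$ is preserved throughout because both $\mathcal L^\e$ and the radial cut-offs around $\tilde d_j$ commute with the involution $\psi\mapsto \overline{\psi(\mathcal R_j\cdot)}$, and the combined bound $\|\psi^o_\alpha\|_*+\|\psi^o_\beta\|_*\lesssim \|h\|_{**}+\|h^o_\alpha\|_{**}+\|h^o_\beta\|_{**}$ is then immediate from Proposition~\ref{prop:linearfull} applied to each piece.

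The main obstacle is the delicate balance in the transition annulus $R_\e/2<\rho_j<R_\e$: $\psi_j^{\mathrm{in}}$ is only controlled in the $|\cdot|_\sharp$ norm, which permits logarithmic growth $\rho_j\log(R_\e/\rho_j)$, yet the commutator errors must be estimated in the stronger $\|\cdot\|_{**}$ norm. Since $\e R_\e\sqrt{|\log\e|}$ is exactly of order one by \eqref{Reps}, every logarithmic factor has to be tracked carefully; the two families of norms $|\cdot|_\sharp$ and $\|\cdot\|_*$ have been tailored precisely so this borderline trade closes.
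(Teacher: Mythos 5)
Your route is genuinely different from the paper's, and it has real gaps that would need to be filled. The paper does not glue inner solutions at all. Instead it splits the right-hand side as $h=\tilde h+h^o_\alpha+h^o_\beta$ with $\tilde h=h-h^o$, invokes Proposition~\ref{prop:linearfull} to write (by uniqueness and linearity) $\psi=\tilde\psi+\tilde\psi_\alpha+\tilde\psi_\beta$, sets $\psi^o_\alpha=\tilde\psi^o+\tilde\psi^o_\alpha$ and $\psi^o_\beta=\tilde\psi^o_\beta$, and then finishes in two lines by quoting Lemma~\ref{lemma:aprioriSharp} for $|\tilde\psi_\alpha^o|_\sharp$ and Lemma~\ref{lemma:sharp-even} for $|\tilde\psi^o|_\sharp$. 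All the hard analysis has already been done inside those two lemmas; Proposition~\ref{prop:sharp2b} itself is soft.

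The most serious gap is in your estimate of the gluing error. You claim the commutator $[\mathcal L^\e,\eta_{j,R_\e/2}]\psi_j^{\mathrm{in}}$ is of order $\e\sqrt{|\log\e|}\bigl(\|h^o_\alpha\|_{**}+\|h-h^o\|_{**}\bigr)$ \emph{in the $\|\cdot\|_{**}$ norm}. That is false. On the transition annulus $\rho_j\sim R_\e$ one has $|\Delta\eta_{j,R_\e/2}|\sim R_\e^{-2}$ and $|\psi_j^{\mathrm{in}}|\lesssim\min\bigl(\|\psi_j^{\mathrm{in}}\|_*,\,R_\e|\psi_j^{\mathrm{in}}|_\sharp\bigr)$, while the $\|\cdot\|_{**}$ weight for the real part is $\rho_1^{-2}+\rho_2^{-2}+\e^2\sim R_\e^{-2}$; dividing gives an error of order $\|\psi_j^{\mathrm{in}}\|_*\gtrsim\|h^o_\alpha\|_{**}$, with no small factor. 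The $\e\sqrt{|\log\e|}=R_\e^{-1}$ gain appears only when you measure the commutator in the weaker $|\cdot|_{\sharp\sharp}$ seminorm (whose weight is $\rho_j^{-1}\sim R_\e^{-1}$, one power better), which is exactly the quantity the paper controls in Lemma~\ref{lemma:sharp-even} via the estimate $|\tilde h|_{\sharp\sharp}\le C\,\e|\log\e|^{1/2}(\|h\|_{**}+\|h^*\|_{**})$. This defect then propagates: ``a final application of Proposition~\ref{prop:linearfull}'' only returns a $\|\cdot\|_*$ bound on the correction, which is not enough to conclude \eqref{est:prop5.3-1}; you must instead invoke the a priori estimate of Lemma~\ref{lemma:aprioriSharp}, $|\psi|_\sharp\lesssim|h|_{\sharp\sharp}+\e|\log\e|^{1/2}\|h\|_{**}$, and feed it both the (small) $|\cdot|_{\sharp\sharp}$ bound and the (order-one) $\|\cdot\|_{**}$ bound of the leftover error.

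Your symmetry argument is also incorrect as stated: $\mathcal L^\e$ does \emph{not} commute with the conjugate-reflection $\psi\mapsto\overline{\psi(\mathcal R_j\cdot)}$, because $V_d$ is not centered at $\tilde d_j$ and the $\e^2\p_s$--terms are $s$-derivatives, not $\theta_j$-derivatives. This is precisely what forces the paper, in the proof of Lemma~\ref{lemma:sharp-even}, to split $\mathcal L^\e=\mathcal L^\e_{s,j}+\mathcal L^\e_{r,j}$ into a symmetry-preserving part and a small remainder, and to treat the remainder as a source term. Without that split you cannot conclude that the odd part of the solution to the even-symmetric piece $\tilde h=h-h^o$ is small; yet this contribution sits inside $\psi^o_\alpha$ and is responsible for the $\|h-h^o\|_{**}$ term in \eqref{est:prop5.3-1}. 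Finally, the global correction you produce at the end is not supported in $B_{R_\e}(\tilde d_1)\cup B_{R_\e}(\tilde d_2)$, so the support requirement in the proposition is not met by your construction as written.
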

The proof of Proposition~\ref{prop:sharp2b} is in \S\ref{sec:prop2}.

\subsection{First a priori estimate and proof of Proposition~\ref{prop:linearfull}}
\label{sec:prop1}
Here we obtain a priori estimates for solutions to 
\begin{align}
\label{eq:linearhomogeneous}
\left\{
\begin{aligned}
& \L^\e(\psi)=h \text{ in } \R^2 \\
& \RE\int_{B(0,4)} \chi_j\overline{\phi_j}W_{x_1}=0, \text{ with }\phi_j(z)=iW(z)\psi(z+\dd_j)\\
& \psi \text{ satisfies the symmetry }  \eqref{eq:eqsymmetriesofpsi}.
\end{aligned}
\right.
\end{align}
\begin{lemma}\label{FirstEstimate}
There exists a constant $C>0$  such that for all $\e$ sufficiently small and any solution $\psi$ of \eqref{eq:linearhomogeneous} with $\|\psi\|_*<\infty$ one has
\begin{align}
\label{est0a}
\|\psi\|_*\leq C\|h\|_{**}.
\end{align}
\end{lemma}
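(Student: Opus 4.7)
The plan is a contradiction/blow-up argument in the spirit of del Pino--Kowalczyk--Musso \cite{delPinoKowalczykMusso2006}. Suppose the estimate fails: there exist sequences $\varepsilon_n\to 0$, $\psi_n$, $h_n$ solving \eqref{eq:linearhomogeneous} with $\|\psi_n\|_*=1$ and $\|h_n\|_{**}\to 0$. The $*$-norm is a sum of seminorms weighted by three different spatial scales, so the $*$-norm must be saturated in at least one of three regimes, and I would rule out each in turn:
\begin{itemize}
\item (Inner) within a fixed ball $\rho_j<3$ around some vortex $\tilde d_j$,
\item (Intermediate) in the annular region $2<\rho_j<R_\varepsilon$,
\item (Outer) in the region $r>1/\varepsilon$.
\end{itemize}

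\textbf{Inner analysis.} Around each $\tilde d_j$ introduce $\phi_{j,n}(\zeta)=iW(\zeta)\psi_n(\zeta+\tilde d_j)$. Using \eqref{eq:defL_j} together with the estimates \eqref{eq:estimatesonalpha_j} on $\alpha_j$, the operator $L_j^{\varepsilon_n}$ differs from the planar linearization $L^0$ (defined in \eqref{L0}) by a coefficient perturbation of order $o_\varepsilon(1)$ in $C^0_{\loc}$. Since $\|V_d \psi_n\|_{C^{2,\alpha}(\rho_j<3)}\leq 1$, Schauder estimates applied on the translated problem give a local $C^{2,\alpha}_{\loc}$ limit $\phi_{j,\infty}$ on $\mathbb R^2$ solving $L^0(\phi_{j,\infty})=0$ and satisfying the inherited symmetry \eqref{eq:eqsymmetriesofpsi}. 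The nondegeneracy result for the standard vortex (see \cite{delPinoKowalczykMusso2006}) forces $\phi_{j,\infty}\in\mathrm{span}\{W_{x_1},W_{x_2},iW\}$. The symmetries \eqref{eq:eqsymmetriesofpsi} kill the components along $W_{x_2}$ and $iW$, while the orthogonality condition $\RE\int_{B(0,4)} \chi_j\overline{\phi_{j,n}} W_{x_1}=0$ passes to the limit and kills the $W_{x_1}$ component. Hence $\phi_{j,\infty}\equiv 0$, which forces $\sum_j \|V_d\psi_n\|_{C^{2,\alpha}(\rho_j<3)}\to 0$.

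\textbf{Intermediate and outer analysis.} Away from the vortices $V_d=\rho_1\rho_2 e^{i(\theta_1+\theta_2)}$ is close to unimodular, and by differentiating $\psi$ as an amplitude-phase perturbation I would derive the decoupled limiting system
\[
-\Delta \psi_1 \simeq \tilde h_1,\qquad -\Delta \psi_2 + 2\psi_2 \simeq \tilde h_2,
\]
with corresponding $\varepsilon^2$-corrections in the outer zone. The control on the $\|\cdot\|_{**}$ norm of $h_n$ provides weighted decay of $\tilde h_{1,n},\tilde h_{2,n}$ matched precisely to the weights in $\|\cdot\|_{1,*}$ and $\|\cdot\|_{2,*}$. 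For $\psi_2$, the positive mass term $+2\psi_2$ yields exponential decay and hence a maximum-principle comparison against barriers $\rho_j^{-2+\sigma}$ rules out concentration in the intermediate zone. For $\psi_1$, having killed the inner contribution, I would construct a barrier of the form $c_1(\rho_1^{-1}+\rho_2^{-1})$ for the gradient, combined with a harmonic comparison in the outer annulus where $\|D^2\psi_1\|$ is measured. In the outer region $r>1/\varepsilon$, a rescaling $\psi_n(z/\varepsilon_n,\cdot)$ yields a constant-coefficient limit equation for which the decay imposed by the norm together with $\|h_n\|_{**}\to 0$ forces the limit to be zero.

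Assembling these three steps contradicts $\|\psi_n\|_*=1$, proving \eqref{est0a}. Existence in Proposition~\ref{prop:linearfull} then follows from the a priori bound by a standard Fredholm/Galerkin argument: one first solves the problem in increasing bounded domains with natural boundary conditions, uses the a priori estimate to extract a limit, and obtains uniqueness from the estimate applied to differences. The main obstacle I foresee is the matching of decay between the imaginary and real parts at the scale $\rho_j\sim R_\varepsilon$: the real part only decays logarithmically while the imaginary part decays polynomially with rate $-2+\sigma$, so the barrier construction must be carried out region by region and glued carefully so that the projection conditions extracted in the blow-up truly annihilate the residual $W_{x_1}$-mode.
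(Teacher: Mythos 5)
Your overall strategy matches the paper's: a compactness/blow-up contradiction with $\|\psi_n\|_*=1$, $\|h_n\|_{**}\to 0$, an inner limit giving $L^0(\phi_0)=0$ with symmetries and the orthogonality condition forcing $\phi_0\equiv 0$, then barrier arguments in the intermediate zone and a scaling argument in the outer zone. Two points deserve attention.

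First, the paper organizes the argument by proving the estimate initially in the weaker $L^\infty$-type norms $\|\cdot\|_{*,0}$, $\|\cdot\|_{**,0}$ (which drop the $D^2$ and H\"older seminorms) and only at the very end upgrades to the full $\|\cdot\|_*$-norm via Schauder estimates; this keeps the blow-up clean. Your sketch jumps directly to the full norm, which is fine in principle but obscures where the H\"older pieces actually enter.

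Second, and more substantively, your barrier for $\psi_1$ in the intermediate zone is not the right object. You propose ``a barrier of the form $c_1(\rho_1^{-1}+\rho_2^{-1})$ for the gradient''; but the $\|\cdot\|_{1,*}$-norm requires controlling $\sup|\psi_1|$ itself, and the forcing satisfies $|p_1|\lesssim \rho_1^{-2}+\varepsilon^2$, which is critical in two dimensions: a radial supersolution of $\Delta u=-\rho^{-2}$ grows like $\log\rho$, so no decaying radial comparison function works. The paper's way around this is the bounded, $\theta$-dependent barrier $\mathcal B_1=M\,\theta_1(\pi-\theta_1)$ (used in the half-plane, with the symmetry $\psi_1(x_1,-x_2)=-\psi_1(x_1,x_2)$ providing the Dirichlet condition on $\{x_2=0\}$), for which $\Delta\mathcal B_1+\varepsilon^2\partial_{ss}^2\mathcal B_1\leq -\tilde c M(\rho_1^{-2}+\varepsilon^2)$ exactly beats the critical forcing. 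Your closing worry about ``logarithmic vs.\ polynomial decay'' is really this issue in disguise, but it is resolved by a different barrier choice, not by careful gluing of radial barriers; in the $*$-norm $\psi_1$ is merely bounded, and the logarithmic weights only appear later in the sharper $|\cdot|_\sharp$-estimate of Lemma~\ref{lemma:aprioriSharp}, which is a separate statement.
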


\begin{proof}[Proof of Lemma~\ref{FirstEstimate} ]

To prove Lemma~\ref{FirstEstimate} we will use first weaker norms.
For $\psi:\mathbb C\to \mathbb C$
we define
\begin{align*}
\|\psi\|_{*,0} = \sum_{j=1}^2 \| V_d \psi \|_{L^\infty(\rho_j<3)} + \| \RE(\psi) \|_{1,*,0} + \| \IM(\psi) \|_{2,*,0}
\end{align*}
where
\begin{align*}
\|\psi_1\|_{1,*,0} 
&= 
\sup_{\rho_1>2,\rho_2>2}
|\psi_1|
+\sup_{2<\rho_1<\frac{2}{\varepsilon} , \, 2<\rho_2<\frac{2}{\varepsilon}}
\frac{|\nabla \psi_1| }{\rho_1^{-1}+\rho_2^{-1}}
+ \sup_{r>\frac{1}{\varepsilon}}
\Bigl[
\frac{1}{\varepsilon}
|\partial_r \psi_1|
+ |\partial_s \psi_1|
\Bigr]
\\
\|\psi_2\|_{2,*,0} 
&=
\sup_{\rho_1>2,\rho_2>2}
\frac{|\psi_2|}{\rho_1^{-2+\sigma}+\rho_2^{-2+\sigma}+\varepsilon^{\sigma-2}}
+\sup_{2<\rho_1<\frac{2}{\varepsilon} , \, 2<\rho_2<\frac{2}{\varepsilon}}
\frac{|\nabla \psi_2| }{\rho_1^{-2+\sigma}+\rho_2^{-2+\sigma}}
\\
& \qquad 
+\sup_{r>\frac{1}{\varepsilon}}
\left[ 
\varepsilon^{\sigma-2}|\partial_r \psi_2|
+\varepsilon^{\sigma-1}|\partial_s \psi_2|
\right].
\end{align*}

In the expressions above the gradient of $\psi_j$ is $(\partial_{x_1}\psi_j,\partial_{x_2}\psi_j)$, where $z = (x_1,x_2)$. Since $z = x_1+ix_2 = r e^{is} = \rho_1 e^{i\theta_1} + \tilde d  =  \rho_2 e^{i\theta_2} - \tilde d  $ we have 
\begin{align*}
|\nabla \psi_j|^2 &= (\partial_{x_1}\psi_j)^2 + (\partial_{x_2}\psi_j)^2 
= (\partial_r \psi_j)^2 + \frac{1}{r^2} ( \partial_s \psi_j)^2
\\
&= 
 (\partial_{\rho_1}\psi_j)^2 + \frac{1}{\rho_1^2} ( \partial_{\theta_1} \psi_j)^2
= 
 (\partial_{\rho_2}\psi_j)^2 + \frac{1}{\rho_2^2} ( \partial_{\theta_2} \psi_j)^2.
\end{align*}
We define also the norm for the right hand side $ h = h_1 + i h_2$ of \eqref{eq:linear}:
\begin{align*}
\|h\|_{**,0}:=\sum_{j=1}^2 \| V_d)h\|_{L^\infty(\l_j<3)}
+ \sup_{\rho_1>2,\rho_2>2} 
\Bigl[
\frac{|\RE(h)|}{\rho_1^{-2}+\rho_2^{-2}+\varepsilon^2}
+ 
\frac{|\IM(h)|}{\rho_1^{-2+\sigma}+\rho_2^{-2+\sigma}+\varepsilon^{2-\sigma}} 
\Bigr].
\end{align*}

We claim that
there exists a constant $C>0$  such that for all $\e$ sufficiently small and any solution of \eqref{eq:linearhomogeneous} one has
\begin{align}
\label{est0}
\|\psi\|_{*,0}\leq C\|h\|_{**,0}.
\end{align}
To prove this, we assume by contradiction that there exist \(\e_n \rightarrow 0\) and \( \psi^{(n)}, h^{(n)}\) solutions of  \eqref{eq:linearhomogeneous} such that
\begin{equation*}
\|\psi^{(n)} \|_{*,0}=1, \qquad \| h^{(n)}\|_{**,0}=o_n(1).
\end{equation*}
We first work near the vortices $\tilde d_j$,
and work  with the function $\phi_j^{(n)}(z)=iW(z)\psi^{(n)}(z+\dd_j)$.

Since $\|\psi^{(n)}\|_{*,0}=1$ from Arzela-Ascoli's Theorem we can  extract a subsequence such that \( \tilde{\phi}^{(n)}_j \rightarrow \phi_0\) in \(C^0_{\text{loc}}(\R^2)\).  Passing to the limit in  \eqref{eq:linearhomogeneous} (we use \eqref{eq:defL_j} and \eqref{eq:estimatesonalpha_j}), we conclude that
\begin{equation*}
L^0(\phi_0)=0 \text{ in } \R^2,
\end{equation*}
with $L^0$ defined in \eqref{L0}. 
Moreover 
$\phi_0$ inherits the symmetry
$\phi_0(\bar{z})=\overline{\phi_0(z)}$.
From the estimate $\|\psi^{(n)}\|_{*,0}=1$ we deduce that $\phi_0 \in L^\infty(\R^2)$ and that $\psi_1 = \RE \left( \phi_0/iW\right)$,  
$\psi_2 = \IM \left( \phi_0/iW\right)$, satisfy
\begin{eqnarray*}
|\psi_1|+ |z| |\nabla \psi_1| \leq  C, \qquad |\psi_2|+|\nabla \psi_2| \leq  \frac{C}{|z|^{2-\sigma}} ,
\quad |z|>1.
\end{eqnarray*}
By  Lemma \ref{lem:ellipticestimatesL0} we deduce
that
\begin{equation*}
\phi_0=c_1W_{x_1},
\end{equation*}
for some $c_1\in \R$.

On the other hand, we can pass to the limit in the orthogonality condition
$$\RE \int_{B(0,4)} \chi\bar{\phi}_j^{(n)} W_{x_1}=0,$$
and obtain that necessarily $c_1=0$. Hence $\phi_j^{(n)} \rightarrow 0$ in $C^0_{\text{loc}}(\R^2)$. 
Therefore 
\begin{align}
\label{eq:inner_estimate-2}
\text{$\psi^{(n)}\to 0$ uniformly on compact subsets of $\{ \rho_1 \geq 2, \rho_2 \geq 2\}$}.
\end{align}

Next we derive estimates \textit{far away} from the vortices. In the following we drop the superscript $n$  for simplicity. In $\{\l_1>2\}\cap \{\l_2>2 \}$ we have that $\psi^{(n)}=\psi$ solves 
\begin{align*}
h=
\Delta\psi  + 2 \frac{\nabla V_d\nabla \psi}{V_d}
-2 i |V_d |^2 \psi_2
+\varepsilon^2 \p^2_{ss}\psi
+ 
\varepsilon^2 
\Bigl(
2\frac{\partial_{s}V_d}{V_d}
-4i
\Bigr)
\p_s\psi 
\end{align*}
which for $\psi_1 = \RE(\psi)$, $\psi_2 = \IM(\psi)$ translates into 
the following system
\begin{multline}
\label{eqPsi100}
h_1 = 
\Delta \psi_1 
+\left(\frac{\nabla w_1}{w_1}+\frac{\nabla w_2}{w_2} \right) \nabla \psi_1
- \nabla (\theta_1+\theta_2) \nabla \psi_2
+ \varepsilon^2 \partial_{ss}^2 \psi_1
 \\  + 2\varepsilon^2 \left[ \left(\frac{\p_sw_1}{w_1}+\frac{\p_s w_2}{w_2} \right) \partial_s \psi_1-\p_s( \theta_1+\theta_2) \p_s \psi_2 \right]
+4 \varepsilon^2 \partial_s \psi_2  
\end{multline}
\begin{multline}\label{eqPsi200}
h_2 =
\Delta\psi_2 
+ \left(\frac{\nabla w_1}{w_1}+\frac{\nabla w_2}{w_2} \right) \nabla \psi_2
+ \nabla (\theta_1+\theta_2) \nabla \psi_1
-2 |V_d|^2 \psi_2 
+ \varepsilon^2 \partial_{ss}^2 \psi_2
 \\ + 2 \varepsilon^2 \left[ \left(\frac{\nabla w_1}{w_1}+\frac{\nabla w_2}{w_2} \right)  \partial_s \psi_2+ \p_s(\theta_1+\theta_2)\p_s \psi_1 \right]
-4 \varepsilon^2 \partial_s \psi_1 .
\end{multline}

We start by estimating $\psi_2$.
Since \( \psi_2\) satisfies \( \psi_2(x_1,-x_2)=\psi_2(x_1,x_2)\) and \( \psi_2(-x_1,x_2)=\psi_2(x_1,x_2)\) it is sufficient to obtain estimates for $\psi_2$ in the quadrant
\( \{x_1>0 , \ x_2>0 \}\).

Let $R>0$ be large fixed and $D_R = \{x_1>0 , \ x_2>0 \} \cap \{\rho_1>R\}$.
By the symmetries, $\psi_2$ satisfies homogeneous Neumann boundary condition at $x_1=0$ or $x_2=0$.

In $D_R$ we have \(|V_d|^2\geq c>0\) for some fixed positive constant \(c\). 
We consider \eqref{eqPsi200} in $D_R$ and rewrite it as
\begin{align}
\nonumber
\Delta \psi_2 +\e^2 \p_{ss}\psi_2-2|V_d|^2 \psi_2   =  p_2 ,
\quad \text{in }D_R,
\end{align}
where

\begin{align*}
p_2 
&=
h_2 
- \left(\frac{\nabla w_1}{w_1}+\frac{\nabla w_2}{w_2} \right) \nabla \psi_2
- \nabla (\theta_1+\theta_2) \nabla \psi_1
- 2\varepsilon^2 \left[ \left(\frac{\p_sw_1}{w_1}+\frac{\p_s w_2}{w_2} \right) \partial_s \psi_2+\p_s( \theta_1+\theta_2) \p_s \psi_1 \right]
\\&
\quad
+4 \varepsilon^2 \partial_s  \psi_1 .
\end{align*}

We use  polar coordinates  \( (\rho_1,\theta_1) \)
around $\tilde d$ and  \( (\rho_2,\theta_2) \)
around $-\tilde d$, that is,
\begin{equation}
\nonumber
r e^{is} 
= \rho_1 e^{i\theta_1} +\tilde d 
=  \rho_2 e^{i\theta_2} - \tilde d.
\end{equation}
From this we get that
\begin{align*}
\partial_r 
&=
\frac{1}{r}
\Bigl(
\rho_1 + \tilde d \cos\theta_1
\Bigr) \partial_{\rho_1}
- \frac{\tilde d \sin\theta_1}{r \rho_1} \partial_{\theta_1} , \quad
\partial_s  = \tilde d \sin\theta_1 \partial_{\rho_1}
+\Bigl( 1+\frac{\tilde d \cos\theta_1}{\rho_1}\Bigr)\partial_{\theta_1}  
\\
\partial_r 
&=
\frac{1}{r}
\Bigl(
\rho_2 - \tilde d \cos\theta_2
\Bigr) \partial_{\rho_2}
+ \frac{\tilde d \sin\theta_2}{r \rho_2} \partial_{\theta_2} , \quad
\partial_s  = -\tilde d \sin\theta_2 \partial_{\rho_2}
+\Bigl( 1-\frac{\tilde d \cos\theta_2}{\rho_2}\Bigr)\partial_{\theta_2}  .
\end{align*}

\noindent With these expressions and the asymptotic behaviour 
stated in Lemma~\ref{lem:propertiesofrho} we see that
\begin{align*}
\left|
\left(\frac{\nabla w_1}{w_1}+\frac{\nabla w_2}{w_2} \right) 
\nabla \psi_2\right|
&\leq 
\frac{C}{R^3}
\Bigl(
\frac{1}{\rho_1^{2-\sigma}}+\varepsilon^{2-\sigma}
\Bigr) \|\psi_2\|_{2,*,0}
\\
\Bigl|
\nabla (\theta_1+\theta_2) \nabla \psi_1 
\Bigr| 
&
\leq C 
\left (R^{-\sigma}+\varepsilon^\sigma
\right)
\Bigl(
 \frac{1}{\rho_1^{2-\sigma}}
+\varepsilon^{2-\sigma}
\Bigr) \|\psi_1\|_{1,*,0}
\\
\varepsilon^2
\left| 
\left( \frac{\p_s w_1}{w_1}+\frac{\p_s w_2}{w_2}\right)\p_s \psi_2
\right| 
& \leq 
C
\left( 
R^{-3} + \varepsilon
\right) 
\Bigl(
\frac{1}{\rho_1^{2-\sigma}}+\varepsilon^{2-\sigma}
\Bigr) \|\psi_2\|_{2,*,0}
\\ 
\e^2 \left| \p_s(\theta_1+\theta_2) \p_s \psi_1 \right| 
& 
\leq C 
\left (R^{-\sigma}+\varepsilon^\sigma
\right)
\Bigl(
 \frac{1}{\rho_1^{2-\sigma}}
+\varepsilon^{2-\sigma}
\Bigr) \|\psi_1\|_{1,*,0}
\\
\e^2 | \partial_s \psi_1| 
&
\leq C 
\left (R^{-\sigma}+\varepsilon^\sigma
\right)
\Bigl(
 \frac{1}{\rho_1^{2-\sigma}}
+\varepsilon^{2-\sigma}
\Bigr) \|\psi_1\|_{1,*,0}
\end{align*}

\noindent Since we assumed $\|\psi\|_{*,0}=1$, we get 
\begin{align*}
|p_2|\leq C (\|h\|_{**,0} 
+ R^{-\sigma}+\varepsilon^\sigma ) 
\Bigl(
 \frac{1}{\rho_1^{2-\sigma}}
+\varepsilon^{2-\sigma}
\Bigr) .
\end{align*}
We use a barrier of the form
\begin{equation*}
\mathcal{B}_2=M \left( \frac{1}{\rho_1^{2-\sigma}}+\e^{2-\sigma} \right)
\end{equation*}
with \(M=C\left(\|h\|_{**,0}+R^{-\sigma}+\e^{\sigma}+\|\psi_2\|_{L^\infty(B_R(\tilde{d}))} \right)\) and \(C>0\) is a large fixed constant. Note that
\begin{align*}
\partial_{ss}^2 \mathcal B_2 &=
\frac{\partial^2 \mathcal B_2}{\partial \rho_1^2}   \tilde d^2 \sin(\theta_1)^2
+ \frac{\partial \mathcal B_2}{\partial \rho_1}  \tilde d \cos(\theta_1)
\Bigl(1 + \frac{\tilde d}{\rho_1} \cos(\theta_1) 
\Bigr)
\\
&=
M(\sigma-2)(\sigma-3)
\frac{\tilde d^2 \sin(\theta_1)^2}{\rho_1^{4-\sigma}}
+ M  (\sigma-2) \tilde d \frac{\cos(\theta_1)}{\rho_1^{3-\sigma}}
\Bigl(1 + \frac{\tilde d}{\rho_1} \cos(\theta_1)
\Bigr)
\end{align*}
and so
\begin{align*}
\Delta \mathcal B_2 +\e^2 \p_{ss}\mathcal B_2-2|V_d|^2 \mathcal B_2  \leq - \tilde c M\Bigl( \frac{1}{\rho_1^{2-\sigma}}  + \varepsilon^{2-\sigma}\Bigl)
\quad \text{in }D_R 
\end{align*}
for some fixed $\tilde c>0$.
Thanks to a comparison principle in $D_R$ (a slight variant of Lemma \ref{lem:comparison_principle_Neumann}) and standard elliptic estimates
we get that
\begin{align}
\label{estPsi2a}
|\psi_2|
\leq 
C \Bigl(   \frac{1}{\rho_1^{2-\sigma}} + \varepsilon^{2-\sigma}\Bigr)
( \|h\|_{**,0} +R^{-\sigma} + \varepsilon^\sigma + \|\psi_2\|_{L^\infty(B_R(\tilde d))}) ,
\quad \text{in }D_R.
\end{align}
Standard elliptic estimates imply
\begin{align}
\label{estGradPsi2a}
|\nabla \psi_2|
\leq 
C \Bigl(   \frac{1}{\rho_1^{2-\sigma}} + \varepsilon^{2-\sigma}\Bigr)
( \|h\|_{**,0} +R^{-\sigma} + \varepsilon^\sigma + \|\psi_2\|_{L^\infty(B_R(\tilde d))}) ,
\quad \text{in }D_R \cap \Bigl\{ \rho_1 \leq \frac{2}{\varepsilon} \Bigr\}.
\end{align}
For points in $D_R$ with $\rho_1>\frac{1}{\varepsilon}$ 
we use the scaling
\begin{align*}
\tilde \psi(\tilde r, s) = \psi(\e^{-1}r,s)
\end{align*}
and we get the estimate
\begin{align}
\label{estGradPsi2b}
\varepsilon^{-1} |\partial_r \psi | 
+  |\partial_s \psi | \leq 
C \varepsilon^{2-\sigma}
( \|h\|_{**} + R^{-\sigma} + \varepsilon^{\sigma} + \|\psi_2\|_{L^\infty(B_R(\tilde d))}) ,
\end{align}
for points in $D_R$  with $\rho_1>\frac{1}{\varepsilon}$.

\noindent Combining \eqref{estPsi2a}, \eqref{estGradPsi2a} and \eqref{estGradPsi2b} we get
\begin{align}
\label{estPsi2B}
\|\psi_2\|_{2,*,0}
\leq C
( \|h\|_{**,0} +R^{-\sigma} + \varepsilon^\sigma + \|\psi_2\|_{L^\infty(B_R(\tilde d))}) .
\end{align}

We next estimate \(\psi_1\). We also use the symmetries satisfied by \(\psi_1\), that is,
\[
\psi_1(-x_1,x_2)=-\psi_1(x_1,x_2), \ \psi_1(x_1,-x_2)=- \psi_1(x_1,x_2)  ,
\]
to look at the equation for \(\psi_1\) in the quadrant \( \{x_1>0 \, \  x_2>0 \} \). 
Let us rewrite equation \eqref{eqPsi100} as 
\begin{align*}
\Delta\psi_1 + \varepsilon^2 \partial_{ss}^2 \psi_1 = p_1
\end{align*}
where 
\begin{align*}
p_1 & =
h_1 
-\left(\frac{\nabla w_1}{w_1}+\frac{\nabla w_2}{w_2} \right) \nabla \psi_1
+ \nabla (\theta_1+\theta_2) \nabla \psi_2
\\ 
& \quad - 2\varepsilon^2 \left[\left(\frac{\p_s w_1}{w_1}+\frac{\p_s w_2}{w_2} \right)\p_s \psi_1-\p_s(\theta_1+\theta_2)\p_s \psi_2 \right]
- 4 \varepsilon^2 \partial_s \psi_2 .
\end{align*}
We have in $D_R$:
\begin{align*}
\left|
\left(\frac{\nabla w_1}{w_1}+\frac{\nabla w_2}{w_2} \right) \nabla \psi_1
\right|
&\leq \frac{C}{R \rho_1^2}
 \|\psi_1\|_{1,*,0}
\\
\left|
\nabla (\theta_1+\theta_2) \nabla \psi_2
\right|
& \leq  C R^{\sigma-1}
\Bigl( \frac{1}{\rho_1^2} + \varepsilon^2
\Bigr)
\|\nabla \psi_2\|_{2,*,0}
\\
2\varepsilon^2 \left| 
\left(\frac{\p_s w_1}{w_1}+\frac{\p_sw_2}{w_2} \right) \partial_s \psi_1
\right| 
& \leq \frac{C}{R^2}
\Bigl(
\frac{1}{\rho_1^2}+\varepsilon^2
\Bigr) \|\psi_1\|_{1,*,0}
\\
2\e^2\left|\p_s(\theta_1+\theta_2)\p_s \psi_2 \right| 
& \leq C
\Bigl( \varepsilon^{1-\sigma} + R^{\sigma-1} \Bigr)
\Bigl( \varepsilon^2 + \frac{1}{\rho_1^2}\Bigr)
\|\psi_2\|_{2,*,0} 
\\
\e^2 |\p_s \psi_2|
&\leq 
C\Bigl(
\varepsilon^{1-\sigma}+R^{\sigma-1}
\Bigr)
\left(
\varepsilon^{2} + \frac{1}{\rho_1^{2}} 
\right) 
\|\psi_2\|_{2,*,0} .
\end{align*}
\hide {\cb
Indeed
\begin{align*}
\left|
\nabla (\theta_1+\theta_2) \nabla \psi_2
\right|
& \leq \frac{C}{\rho_1}|\nabla \psi_2|
\\
& \leq \frac{C}{\rho_1}
\Bigl( \frac{1}{\rho_1^{2-\sigma}} + \varepsilon^{2-\sigma} 
\Bigr)
\|\nabla \psi_2\|_{2,*,0}
\\
& \leq C R^{\sigma-1}
\Bigl( \frac{1}{\rho_1^2} + \varepsilon^2
\Bigr)
\|\nabla \psi_2\|_{2,*,0}
\end{align*}
}
\hide {\cb
Indeed, 
\begin{align*}
\left|
\left(\frac{\nabla w_1}{w_1}+\frac{\nabla w_2}{w_2} \right) 
\nabla \psi_1\right|
& \leq 
\frac{C}{\rho_1^3} |\nabla \psi_1|
\\
& \leq  \frac{C}{\rho_1^3}
\Bigl(
\frac{1}{\rho_1}+\varepsilon
\Bigr) \|\psi_1\|_{1,*}
\\
& \leq  \frac{C}{R \rho_1^2}
 \|\psi_1\|_{1,*}
\end{align*}
}
\hide {\cb
Indeed, 
\begin{align*}
\Bigl|
\nabla (\theta_1+\theta_2) \nabla \psi_2
\Bigr| 
& \leq 
\frac{C}{\rho_1} |\nabla \psi_2|
\\
& \leq  \frac{C}{\rho_1}\Bigl(
\frac{1}{\rho_1}+\varepsilon
\Bigr) \|\psi_2\|_{2,*}
\\
& \leq  C \Bigl(
\frac{1}{\rho_1^2}+\varepsilon^2
\Bigr) \|\psi_2\|_{2,*}
\end{align*}
}
\hide {\cb
Indeed, 
\begin{align*}
\e^2
\left| 
\left( \frac{\p_s w_1}{w_1}+\frac{\p_s w_2}{w_2}\right)\p_s \psi_1
\right| 
& \leq 
C
\varepsilon^2
\frac{\tilde d |\sin(\theta_1)|}{\rho_1^3}
|\partial_s \psi_1|
\\
& \leq 
C
\frac{\varepsilon}{\rho_1^3}
|\partial_s \psi_1|
\end{align*}
Since we have
\begin{align*}
\varepsilon|\partial_s \psi_1|
\leq 
C
\left(
\varepsilon + \frac{1}{\rho_1} 
\right) 
\|\psi_1\|_{1,*,0} 
\end{align*}
we get that 
\begin{align*}
\e^2
\left| 
\left( \frac{\p_s w_1}{w_1}+\frac{\p_s w_2}{w_2}\right)\p_s \psi_1
\right| 
& \leq 
\frac{C}{\rho_1^3}
\Bigl(
\varepsilon + \frac{1}{\rho_1}
\Bigr) 
\|\psi_1\|_{1,*,0} 
\\
& \leq 
\frac{C}{R^2}
\Bigl(
\frac{1}{\rho_1^2}+\varepsilon^2
\Bigr) \|\psi_1\|_{1,*}
\end{align*}
}
\hide {\cb
Indeed
\begin{align*}
2\e^2\left|\p_s(\theta_1+\theta_2)\p_s \psi_2 \right| 
& \leq C \varepsilon^2 
\left|
 1 + \frac{\tilde d}{\rho_1} \cos(\theta_1) 
 \right|
 | \partial_s \psi_2 |
\\
& \leq C \varepsilon
\Bigl( \varepsilon + \frac{1}{\rho_1}\Bigr)
|\partial_s \psi_2|
\end{align*} 
But, if $\rho_1 \leq \frac{2}{\varepsilon}$, we use
\begin{align*}
\varepsilon
|\partial_s \psi_2|
& \leq 
\varepsilon
\left|
\partial_{\rho_1} \psi_2 \tilde d \sin(\theta_1)
+ \partial_{\theta_1}\psi_2
\Bigl(1 + \frac{\tilde d}{\rho_1} \cos(\theta_1) \Bigr)
\right|
\\
& \leq 
C
\left(
1
+\varepsilon\rho_2
\right) 
|\nabla \psi_2|
\\
& \leq 
C
\left(
1
+\varepsilon\rho_2
\right) 
\Bigl( \frac{1}{\rho_1^{2-\sigma}} + \varepsilon^{2-\sigma}\Bigr)
\| \psi_2 \|_{2,*,0}
\\
& \leq 
C
\Bigl( \frac{1}{\rho_1^{2-\sigma}} + \varepsilon^{2-\sigma}\Bigr)
\| \psi_2 \|_{2,*,0}
\end{align*}
and if $\rho_1 > \frac{1}{\varepsilon}$,
\begin{align*}
\varepsilon
|\partial_s\psi_2 |
\leq \varepsilon^{2-\sigma }\|\psi_2\|_{2,*,0} .
\end{align*}
So in any case
\begin{align*}
\varepsilon|\partial_s \psi_2|
\leq 
C
\left(
\varepsilon^{2-\sigma} + \frac{1}{\rho_1^{2-\sigma}} 
\right) 
\|\psi_2\|_{2,*,0} .
\end{align*}
Then
\begin{align*}
2\e^2\left|\p_s(\theta_1+\theta_2)\p_s \psi_2 \right| 
&  C 
\Bigl( \varepsilon + \frac{1}{\rho_1}\Bigr)
\left(
\varepsilon^{2-\sigma} + \frac{1}{\rho_1^{2-\sigma}} 
\right) 
\|\psi_2\|_{2,*,0} 
\\
& \leq 
C
\Bigl( \varepsilon^{1-\sigma} + R^{\sigma-1} \Bigr)
\Bigl( \varepsilon^2 + \frac{1}{\rho_1^2}\Bigr)
\|\psi_2\|_{2,*,0} 
\end{align*}
}
\hide {\cb
Indeed, using
\begin{align*}
\varepsilon|\partial_s \psi_2|
\leq 
C
\left(
\varepsilon^{2-\sigma} + \frac{1}{\rho_1^{2-\sigma}} 
\right) 
\|\psi_2\|_{2,*,0} 
\end{align*}
we get
\begin{align*}
\varepsilon^2 |\partial_s \psi_2 |
& \leq 
C\varepsilon
\left(
\varepsilon^{2-\sigma} + \frac{1}{\rho_1^{2-\sigma}} 
\right) 
\|\psi_2\|_{2,*,0} 
\\
& \leq 
C\Bigl(
\varepsilon^{1-\sigma}+R^{\sigma-1}
\Bigr)
\left(
\varepsilon^{2} + \frac{1}{\rho_1^{2}} 
\right) 
\|\psi_2\|_{2,*,0} .
\end{align*}
}
Using that $\|\psi\|_{*,0}=1$ we get that 
\begin{align*}
|p_1| \leq C \Bigl( \|h \|_{**,0} + R^{\sigma-1}+\varepsilon^{1-\sigma}  \Bigr)
\left(  \frac{1}{\rho_1^2}+\varepsilon^2\right)
\end{align*}
We use the comparison principle with the barrier
\begin{align*}
\mathcal B_1 = M \theta_1 (\pi-\theta_1)
\end{align*}
with $M = C( \|h \|_{**,0} + R^{\sigma-1}+\varepsilon^{1-\sigma} + \|\psi_1\|_{L^\infty(B_R(\dd))})$ and $C$ a large fixed constant.
We note that 
\begin{align*}
\partial_{ss}^2 \mathcal B_1 &=
- \tilde d \sin\theta_1
\frac{\tilde d \cos\theta_1}{\rho_1^2} M ( \pi - 2 \theta_1)
\\
& \qquad 
+
\Bigl(
1+\frac{\tilde d\cos\theta_1}{\rho_1}
\Bigr) 
\Bigl[
-\frac{\tilde d \sin\theta_1}{\rho_1} M ( \pi - 2 \theta_1)
-2 \Bigl(
1+\frac{\tilde d\cos\theta_1}{\rho_1}
\Bigr) M
\Bigr]
\end{align*}
\hide {\cb
\begin{align*}
\partial_s \mathcal B
&= 
\Bigl(
1+\frac{\tilde d\cos\theta_1}{\rho_1}
\Bigr) \partial_{\theta_1} \mathcal B
\\
&= 
\Bigl(
1+\frac{\tilde d\cos\theta_1}{\rho_1}
\Bigr) M( \pi-2\theta_1)
\\
\partial_{ss}^2 \mathcal B &=
\tilde d \sin\theta_1\partial_{\rho_1} 
\partial_s \mathcal B
+
\Bigl(
1+\frac{\tilde d\cos\theta_1}{\rho_1}
\Bigr) \partial_{\theta_1} \partial_s \mathcal B
\\
\partial_{ss}^2 \mathcal B &=
- \tilde d \sin\theta_1
\frac{\tilde d \cos\theta_1}{\rho_1^2} M ( \pi - 2 \theta_1)
\\
& \qquad 
+
\Bigl(
1+\frac{\tilde d\cos\theta_1}{\rho_1}
\Bigr) 
\Bigl[
-\frac{\tilde d \sin\theta_1}{\rho_1} M ( \pi - 2 \theta_1)
-2 \Bigl(
1+\frac{\tilde d\cos\theta_1}{\rho_1}
\Bigr) M
\Bigr]
\end{align*}
Then
\begin{align*}
\varepsilon^2 
\partial_{ss}^2 \mathcal B &
\leq 
C M \frac{1}{|\log \varepsilon| \rho_1^2}
+ M 
\Bigl(
\varepsilon+\frac{1}{\sqrt{|\log\varepsilon|}\rho_1}
\Bigr) 
\Bigl(
\frac{1}{\sqrt{|\log\varepsilon|}\rho_1} 
- 2 \varepsilon
\Bigr)
\\
& \leq 
C M \frac{1}{ \sqrt { |\log \varepsilon|  }}
\Bigl( \frac{1}{\rho_1^2} + \varepsilon^2
\Bigr)
-2  M \varepsilon^2
\end{align*}
and this implies that 
\begin{align*}
\Delta \mathcal B_1 + \varepsilon^2 \partial_{ss}^2 \mathcal B_1
\leq -\tilde c M \Bigl( \frac{1}{\rho_1^2}+\varepsilon^2\Bigr)
\end{align*}
}
From this we get that 
\begin{align*}
\Delta \mathcal B_1 + \varepsilon^2 \partial_{ss}^2 \mathcal B_1
\leq -\tilde c M \Bigl( \frac{1}{\rho_1^2}+\varepsilon^2\Bigr)
\end{align*}
for some fixed $\tilde c>0$.

Thanks to a comparison principle in $D_R$ (a slight variant of Lemma \ref{lem:comparison_principle_Neumann})
we get that
\begin{align}
\label{estPsi1a}
|\psi_1|
\leq 
C
(  \|h \|_{**,0} + R^{\sigma-1}+\varepsilon^{1-\sigma} + \|\psi_1\|_{L^\infty(B_R(\dd))}  )
\quad \text{in }D_R.
\end{align}
Elliptic estimates and a standard scaling give
\begin{align}
\label{estGradPsi1a}
\rho_1 |\nabla \psi_1|
\leq 
C
( \|h \|_{**,0} + R^{\sigma-1}+\varepsilon^{1-\sigma} + \|\psi_1\|_{L^\infty(B_R(\dd))})
\end{align}
for points in $D_R$ with $2<\rho_1<\frac{2}{\varepsilon}$.
To estimate the gradient for points in $D_R$ with $\rho_1>\frac{1}{\varepsilon}$ we use the scaling
\[
\tilde \psi( r,s) = \psi(\varepsilon^{-1}\tilde r ,s)
\]
and see that 
\begin{align}
\label{estGradPsi1b}
\varepsilon^{-1} |\partial_r \psi_1| + |\partial_s \psi_1| 
\leq
C( \|h \|_{**,0} + R^{\sigma-1}+\varepsilon^{1-\sigma} + \|\psi_1\|_{L^\infty(B_R(\dd))}).
\end{align}
in this region.

Combining \eqref{estPsi1a}, \eqref{estGradPsi1a} and \eqref{estGradPsi1b} we get that
\begin{align*}
\| \psi_1 \|_{1,*,0}
\leq  C
( \|h \|_{**,0} + R^{\sigma-1}+\varepsilon^{1-\sigma} + \|\psi_1\|_{L^\infty(B_R(\dd))}) .
\end{align*}
Then using \eqref{estPsi2B} we conclude that 
\begin{align*}
\| \psi \|_{*,0}
\leq  C
( \|h\|_{**,0} +R^{-\sigma} + \varepsilon^\sigma 
+R^{\sigma-1}+\varepsilon^{1-\sigma} 
+ \|\psi_1\|_{L^\infty(B_R(\tilde d))}
+ \|\psi_2\|_{L^\infty(B_R(\tilde d))}) .
\end{align*}
Using then \eqref{eq:inner_estimate-2} and $\|h\|_{**}=o(1)$, from the previous inequality  we get that $\|\psi\|_{*,0} <\frac{1}{2}$, 
if from the start $R$ is fixed large and we take $\varepsilon>0$ small. This is a contradiction and proves \eqref{est0}.

The full estimate \eqref{est0a} follows from \eqref{est0} and Schauder estimates.

\end{proof}
\begin{proof}[Proof of Proposition~ \ref{prop:linearfull}]
We first solve the problem in bounded domains. We consider the equation
\begin{align}
\label{eq:linearbounded}
\left\{
\begin{aligned}
& \L^\e(\psi)=h+c\sum_{j=1}^2\frac{\chi_j}{iW(z-\dd_j)} (-1)^jW_{x_1}(z-\dd_j)
\quad\text{in }B_M(0)
\\
& \psi = 0 \quad\text{on } \partial B_M(0)
\\
& \RE\int_{B(0,4)} \chi \overline{\phi_j}W_{x_1}=0, \text{ with }\phi_j(z)=iW(z)\psi(z+\dd_j),\quad j=1,2,\\
&\psi \text{ satisfies the symmetry }  \eqref{eq:eqsymmetriesofpsi}.
\end{aligned}
\right.
\end{align}
with $M>10\dd$. We set
\begin{equation*}\begin{split}
\mathcal{H}:=& \Bigl\{\phi=iV_d\psi \in H^1_0(B_M(0),\mathbb{C}); \;\RE \int_{B(0,4)} \chi\bar{\phi}_jW_{x_1}=0, \ j=1,2, \;\;\psi \text{ satisfies \eqref{eq:eqsymmetriesofpsi}}\Bigr\}.
\end{split}\end{equation*}
We equip $\mathcal{H}$ with the inner product
\begin{equation*}
[\phi,\varphi]:= \RE \int_{B_M(0)} \left( \nabla \phi \overline{\nabla\varphi} +\e^2\p_s \phi \overline{\p_s \varphi} \right).
\end{equation*}
With this inner product $\mathcal{H}$ is a Hilbert space. Indeed it is a closed subspace of $H^1_0(B_M(0),\mathbb{C})$ and $[\cdot,\cdot]$ is an  inner product on $H^1(B_M(0),\mathbb{C})$ thanks to the Poincar\'e inequality. In terms of $\phi$ the first equation of \eqref{eq:linearbounded} can be rewritten as
\begin{equation*}\begin{split}
\Delta \phi&+(1-|V_d|^2)\phi-2\RE(\overline{\phi}V_d)V_d +\e^2(\p^2_{ss}\phi-4i\p_s\phi-4\phi)+(\eta-1)\frac{E}{V_d}\phi\\
&=iV_dh+iV_dc\sum_{j=1}^2\tilde{\eta}(-1)^j\chi_j(z)\frac{W_{x_1}(z-\dd_j)}{iW(z-\dd_j)}.
\end{split}\end{equation*}
We can express this equation in its variational form. Namely, for all $\varphi \in \mathcal{H}$
\begin{equation*}\begin{split}
&-\RE \int_{B_M(0)}\left( \nabla \phi \overline{\nabla \varphi}+\e^2 \p_s\phi\overline{\p_s\varphi} \right)
+\e^2\RE\int_{B_M(0)}\left( 4i\phi\overline{\p_s\varphi}-4\phi \overline{\varphi} \right)\\
&-2\RE  \int_{B_M(0)} \RE(\overline{\phi}V_d)V_d \overline{\varphi}  +\RE \int_{B_M(0)} [(\eta-1)\frac{E}{V_d}+(1-|V_d|^2)]\phi \overline{\varphi} \\
&\qquad=\RE \int_{B_M(0)} iV_d\left(h- c\sum_{j=1}^2\chi_j(-1)^j\frac{W_{x_1}(z-\dd_j)}{iW(z-\dd_j)}\right)\overline{\varphi}.
\end{split}\end{equation*}
We now denote by $\langle k(x)\phi, \cdot \rangle$ the linear form on $\mathcal{H}$ defined  by
\begin{equation*}\begin{split}
\langle k(x)\phi,\varphi \rangle:=& \,\e^2\RE\int_{B_M(0)}\left( 4i\phi\overline{\p_s\varphi}-4\phi \overline{\varphi} \right)-2\RE  \int_{B_M(0)} \RE(\overline{\phi}V_d)V_d \overline{\varphi}  \\
&\,+\RE \int_{B_M(0)} [(\eta-1)\frac{E}{V_d}+(1-|V_d|^2)]\phi \overline{\varphi}.
\end{split}\end{equation*}
In the same way we denote by $\langle s,\cdot\rangle$ the linear form defined by
\begin{equation*}
\langle s,\varphi \rangle:= \RE \int_{B_M(0)} iV_d\left(h- c\sum_{j=1}^2\chi_j(-1)^j\frac{W_{x_1}(z-\dd_j)}{iW(z-\dd_j)}\right) \overline{\varphi}.
\end{equation*}
Thus, the equation can be rewritten as
\begin{equation*}
[\phi,\varphi]-\langle k(x)\phi,\varphi \rangle =\langle s,\varphi\rangle, \forall \varphi \in \mathcal{H}.
\end{equation*}
By using the Riesz representation theorem we can find a bounded linear operator $K$ on $\mathcal{H}$ and $S$, an element of $\mathcal{H}$ depending linearly on $s$, such that the equation has the operational form
\begin{equation}\label{eq:linearopform}
\phi-K(\phi)=S.
\end{equation}
Besides, thanks to the compact Sobolev injections $H^1_0(B_M(0),\mathbb{C}) \hookrightarrow L^2(B_M(0),\mathbb{C})$ we know that $K$ is compact. We can then apply Fredholm's alternative to deduce the existence of $\phi$ such that \eqref{eq:linearopform} holds if the homogeneous equation only has the trivial solution. To prove this last point we establish an a priori estimate on $c$. In order to do that we use the following equivalent form of the equation in the region $B(\dd,\dd)$, with the translated variable it becomes:
\begin{equation}
\nonumber
L_j^\e(\phi_j)=h_j+c\chi W_{x_1} \text{ in } B(0,\dd).
\end{equation}
where $L_j^\e$ is defined in \eqref{defL_j}, $\phi_j(\z)=iW(\z)\psi(z-\dd_j)$ and \(h_j(\z)=iW(\z)\psi(z-\dd8j)\) for $|\z|< \dd$.

We can test this equation against $\overline{W}_{x_1}$ to find
\begin{equation*}
c=-\frac{1}{c_*}\left[\RE \int_{B(0,\dd)}  h_j\overline{W}_{x_1}-\RE\int_{B(0,\dd)}L^\e_j(\phi_j)\overline{W}_{x_1}\right],
\end{equation*}
with $c_*:=\RE\int_{B(0,\dd)}\chi|W_{x_1}|^2=\RE \int_{B(0,R)} \chi |W_{x_1}|^2 \simeq C$ for some \(C>0\) of order \(1\) and $L^\e_j$ defined in \eqref{defL_j}.
Integrating by parts we obtain
\begin{equation*}
\RE \int_{B(0,\dd)} L^\e_j(\phi_j) \overline{W}_{x_1} =\RE \int_{B(0,\dd)} \overline{\phi}_j (L_j^\e-L^0)(W_{x_1}) +\RE \Bigl\{ \int_{\p B(0,\dd)} \frac{\p \phi_j}{\p \nu}\overline{W}_{x_1}-\phi_j \frac{\p}{\p \nu}\overline{W}_{x_1} \Bigr\}.
\end{equation*}
In the previous equality we used that \(L^0(W_{x_1})=0\). However, using the expansion of \(L^\e_j-L^o\) in \eqref{eq:defL_j} and the estimates \eqref{eq:estimatesonalpha_j}, we can see that 
\begin{equation}\label{eq:forfutureref}
\left |\RE \int_{B(0,\dd)} \overline{\phi}_j (L_j^\e-L^0)(W_{x_1}) \right| =O_\e(\e \sqrt{|\log \e|})\|\psi\|_*.
\end{equation}
By using the decay of \(\phi_j, \nabla \phi_j\) and \(W_{x_1}, \nabla W_{x_1}\) we can also check that
\begin{equation*}
\left|\RE \Bigl\{ \int_{\p B(0,\dd)} \frac{\p \phi_j}{\p \nu}\overline{W}_{x_1}-\phi_j \frac{\p}{\p \nu}\overline{W}_{x_1} \Bigr\} \right| =O_\e(\e \sqrt{|\log \e|})\|\psi\|_*.
\end{equation*}

\noindent Therefore we arrive at
\begin{equation*}
c=-\frac{1}{c_*}\RE \int_{B(0,\dd))} h_j\overline{W}_{x_1}+O_\e(\e\sqrt{|\log \e|})\frac{\|\psi\|_*}{c_*}.
\end{equation*}
To conclude the proof we note that we can apply Lemma \ref{FirstEstimate} to conclude that a solution of the homogeneous equation satisfies
\begin{equation*}
\|\psi\|_*\leq C\| c\sum_{j=1}^2\chi_j(z)(-1)^j\frac{W_{x_1}(z-\dd_j)}{iW(z-\dd_j)} \|_{**}\leq C \e \sqrt{|\log \e|} \|\psi\|_*,
\end{equation*}
and thus $\psi=0$.  Then for any $M>10\dd$ we obtain the existence of a solution of \eqref{eq:linearbounded} satisfying
\begin{equation*}
\|\psi_M\|_*\leq C \|h\|_{**},
\end{equation*}
with $C$ independent of $M$. Note that in the previous argument the norms $\|\cdot\|_*$, $\|\cdot\|_{**}$ are slightly adapted to deal with the fact that we work on bounded domains. We can extract a subsequence such that $\psi_M\rightharpoonup \psi$ in $H^1_{\text{loc}}(\R^2)$ with $\psi$ solving \eqref{eq:linear}. From Lemma \ref{FirstEstimate} we deduce $\|\psi\|_*\leq C\|h\|_{**}$.
\end{proof}

\subsection{Second a priori estimate and proof of Proposition \ref{prop:sharp2b}}
\label{sec:prop2}

\begin{lemma}
\label{lemma:aprioriSharp}
Let $\alpha \in (0,1)$, $\sigma \in (0,1)$.
Then  there exists a constant $C>0$  such that for all $\e$ sufficiently small and any solution $\psi$ of \eqref{eq:linearhomogeneous} with $\|\psi\|_*<\infty$  one has
\begin{equation}
\label{claimSharp}
|\psi|_\sharp \leq C(  |h |_{\sharp\sharp} + \varepsilon | \log\varepsilon|^\frac{1}{2} \|h\|_{**} )  .
\end{equation}
\end{lemma}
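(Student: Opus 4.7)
The plan is to argue by contradiction: suppose there exist sequences $\varepsilon_n\to 0$, $\psi^{(n)}$ and $h^{(n)}$ satisfying \eqref{eq:linearhomogeneous} with $|\psi^{(n)}|_\sharp=1$ while $|h^{(n)}|_{\sharp\sharp}+\varepsilon_n|\log\varepsilon_n|^{1/2}\|h^{(n)}\|_{**}\to 0$. I would split $\R^2$ into three pieces --- the near-vortex disks $\{\rho_j<3\}$, the intermediate annular region $\{2<\rho_j<R_{\varepsilon_n}\}$, and the outer region $\{\min(\rho_1,\rho_2)>R_{\varepsilon_n}\}$ --- and show that each contributes $o(1)$ to $|\psi^{(n)}|_\sharp$. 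The structural point driving the weight $\varepsilon|\log\varepsilon|^{1/2}$ is the identity $\varepsilon|\log\varepsilon|^{1/2}=\alpha_0/R_\varepsilon$, which represents precisely the cost of matching the interior sharp bound to the outer $\|\cdot\|_*$-control at $\rho_j=R_\varepsilon$.

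Near each vortex I would invoke Lemma~\ref{FirstEstimate} to obtain $\|\psi^{(n)}\|_*\le C\|h^{(n)}\|_{**}$ and then blow up $\phi_j^{(n)}(\zeta):=iW(\zeta)\psi^{(n)}(\zeta+\tilde d_j)$, renormalized by $|\log\varepsilon_n|$. Using Arzel\`a--Ascoli together with the expansions \eqref{eq:defL_j}--\eqref{eq:estimatesonalpha_j}, I extract a $C^{2,\alpha}_{\rm loc}$-limit $\phi_0$ satisfying $L^0\phi_0=0$ in $\R^2$ with the symmetry $\phi_0(\bar\zeta)=\overline{\phi_0(\zeta)}$ and at most linear growth. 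By Lemma~\ref{lem:ellipticestimatesL0} this limit must take the form $c_1 W_{x_1}$, and passing to the limit in $\RE\int_{B(0,4)}\chi\,\overline{\phi_j^{(n)}}W_{x_1}=0$ forces $c_1=0$; hence the inner summand $|\log\varepsilon_n|^{-1}\|V_d\psi^{(n)}\|_{C^{2,\alpha}(\rho_j<3)}$ of $|\psi^{(n)}|_\sharp$ tends to zero.

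In the intermediate annulus I would split $\psi=\psi_1+i\psi_2$ and use the system \eqref{eqPsi100}--\eqref{eqPsi200}, noting that the forcing now obeys $|h_1|\lesssim |h|_{\sharp\sharp}(\rho_1^{-1}+\rho_2^{-1})$ and $|h_2|\lesssim |h|_{\sharp\sharp}(\rho_1^{-1+\sigma}+\rho_2^{-1+\sigma})$, while the $\psi$- and $\nabla\psi$-coupling terms carry an extra factor of $\rho^{-1}$ (or better) relative to the target growth. For $\psi_2$ the candidate barrier
\begin{equation*}
\mathcal B_2=M\Bigl[\rho_1^{-1+\sigma}+\rho_2^{-1+\sigma}+\rho_1^{-1}\log(2R_\varepsilon/\rho_1)+\rho_2^{-1}\log(2R_\varepsilon/\rho_2)\Bigr]
\end{equation*}
is checked to satisfy $-\Delta\mathcal B_2-\varepsilon^2\partial_{ss}^2\mathcal B_2+2|V_d|^2\mathcal B_2\ge \tilde{c}\,M\bigl(\rho_1^{-1+\sigma}+\rho_2^{-1+\sigma}+\rho_1^{-1}\log(2R_\varepsilon/\rho_1)+\rho_2^{-1}\log(2R_\varepsilon/\rho_2)\bigr)$, so the mass term $2|V_d|^2$ does the work. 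For $\psi_1$, using the Dirichlet conditions imposed on the coordinate axes by \eqref{eq:eqsymmetriesofpsi}, I would use a barrier in the quadrant $\{x_1,x_2>0\}$ modelled on the angular ansatz from the proof of Lemma~\ref{FirstEstimate},
\begin{equation*}
\mathcal B_1=M\,\theta_1(\pi-\theta_1)\Bigl[\rho_1\log(2R_\varepsilon/\rho_1)+\rho_2\log(2R_\varepsilon/\rho_2)\Bigr],
\end{equation*}
where the angular factor supplies the $-2/\rho^2$ term missing from the radial piece and converts the Poisson balance into a supersolution of $\Delta+\varepsilon^2\partial_{ss}^2$. In both cases $M$ is taken of order $C(|h|_{\sharp\sharp}+\varepsilon|\log\varepsilon|^{1/2}\|h\|_{**})$, the second summand arising from the matching inequality $\|\psi\|_{L^\infty(\rho_j\sim R_\varepsilon)}/R_\varepsilon\lesssim\varepsilon|\log\varepsilon|^{1/2}\|h\|_{**}$ and the comparison principle of Lemma~\ref{lem:comparison_principle_Neumann}.

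The main obstacle is the barrier for $\psi_1$: unlike $\psi_2$, the real part solves a Laplace-type equation with no positive zeroth-order term, so the radial ansatz $\rho\log(2R_\varepsilon/\rho)$ is not by itself a supersolution of $-\Delta$. The supersolution property must come from the interaction of the angular factor with the radial profiles and must absorb the slowly-decaying coupling $\nabla(\theta_1+\theta_2)\cdot\nabla\psi_2$ together with its $\varepsilon^2\partial_s$-analogue, which is precisely the mechanism forcing the logarithmic relaxation in \eqref{normSharp1}--\eqref{normSharp2}. Once these pointwise barriers are in place, standard interior Schauder estimates --- suitably scaled by $\rho_j$ or by $\varepsilon^{-1}$ in the far field --- upgrade them to the gradient and H\"older components of $|\cdot|_\sharp$; summing the three regional bounds and letting $n\to\infty$ produces $|\psi^{(n)}|_\sharp=o(1)$, contradicting the normalization.
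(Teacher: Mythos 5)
Your overall strategy (contradiction argument, near-vortex blowup renormalized by $|\log\varepsilon|$, intermediate-region barriers, matching at $\rho_j\sim R_\varepsilon$) is the same as the paper's, and the $\psi_2$-barrier with the mass term $2|V_d|^2$ doing the work is essentially right. But there are two genuine gaps.

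First, the near-vortex step: after dividing by $|\log\varepsilon_n|$, the normalization $|\psi^{(n)}|_{\sharp,0}=1$ gives the limit $\psi_0=\psi_0^1+i\psi_0^2$ only the bounds $|\psi_0^1(z)|\lesssim|z|$ and $|\psi_0^2(z)|\lesssim 1$, not boundedness of $\psi_0^1$. Lemma~\ref{lem:ellipticestimatesL0}, which you invoke, requires $\phi_0\in L^\infty(\R^2)$ and $|\psi_1|+(1+|z|)|\nabla\psi_1|\le C$; those hypotheses fail here. This is exactly why the paper proves the separate Lemma~\ref{lem:ellipticestimatesL0-b}, which admits growth $(1+|z|)^\alpha$ with $\alpha<3$ and rules out the linearly growing element of the kernel via a Wronskian argument in the $k=1$ Fourier mode. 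Without that lemma (or an equivalent argument), the conclusion $\phi_0=c_1W_{x_1}$ is not justified.

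Second, and more seriously, the proposed $\psi_1$-barrier is not a supersolution. Writing $g(\rho)=\rho\log(2R_\varepsilon/\rho)$ and $\mathcal B_1=M\,\theta_1(\pi-\theta_1)\,g(\rho_1)$, a direct computation gives
\begin{equation*}
\Delta\mathcal B_1
= \frac{M}{\rho_1}\Bigl[\theta_1(\pi-\theta_1)\bigl(\log(2R_\varepsilon/\rho_1)-2\bigr) - 2\log(2R_\varepsilon/\rho_1)\Bigr].
\end{equation*}
At $\theta_1=\pi/2$ the angular prefactor equals $\pi^2/4>2$, so for $\rho_1\ll R_\varepsilon$ the bracket is $\approx(\pi^2/4-2)\log(2R_\varepsilon/\rho_1)>0$, i.e.\ $\Delta\mathcal B_1>0$. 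Your claim that "the angular factor supplies the $-2/\rho^2$ term missing from the radial piece and converts the Poisson balance into a supersolution" is therefore false: the radial Laplacian of $g$ is \emph{positive} ($=[\log(2R_\varepsilon/\rho)-2]/\rho$) and, multiplied by the angular factor whose maximum exceeds $2$, it overwhelms the $-2g/\rho^2$ coming from $\partial^2_{\theta_1}[\theta_1(\pi-\theta_1)]=-2$. The paper avoids this by not using a pointwise angular barrier at all: Lemma~\ref{lem:estimates-3b} decomposes the model equation $\Delta u+\varepsilon^2\partial_{ss}^2u=f$ into Fourier modes, and for $k=1$ the exact coefficient $-1/r^2$ (the eigenvalue of $-\partial_{ss}^2$ on $\sin s$) cancels the troublesome positive radial contribution, so that $r\log(3R_\varepsilon/r)$ is a genuine supersolution of the mode-1 ODE. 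A modewise argument of this type, or some replacement for it, is needed to close the $\psi_1$ estimate; the angular barrier you propose does not do it.
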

\begin{proof}
We work with the weaker seminorms
\begin{align*}
| \psi |_{\sharp,0} &= 
\sum_{j=1}^2 
|\log\varepsilon|^{-1}
\| V_d \psi \|_{L^\infty(\rho_j<3)}
+ |\RE(\psi)|_{\sharp,1}+ |\IM(\psi)|_{\sharp,2} ,
\end{align*}
where $|\ |_{\sharp,1}$, $|\ |_{\sharp,2}$ are defined in \eqref{normSharp1}, \eqref{normSharp2}
and
\begin{align*}
| h |_{\sharp \sharp,0}
&=
\sum_{j=1}^2 \| V_d h\|_{L^\infty(\l_j<4)}
+\sup_{2<\rho_1 < R_\varepsilon , 2<\rho_2 < R_\varepsilon}
\Bigl[
\frac{|h_1|}{\rho_1^{-1} + \rho_2^{-1}}
+\frac{ | h_2 | }{\rho_1^{-1+\sigma} + \rho_2^{-1+\sigma}}
\Bigr] .
\end{align*}
We claim that  there exists a constant $C>0$  such that for all $\e$ sufficiently small and any solution of \eqref{eq:linearhomogeneous} one has
\begin{equation}
\label{claimSharp0}
|\psi|_{\sharp,0} \leq C(  |h |_{\sharp\sharp,0} + \varepsilon | \log\varepsilon|^\frac{1}{2} \|h\|_{**} )  .
\end{equation}

We argue by contradiction
and assume that there exist \(\e_n \rightarrow 0\) and \( \psi^{(n)}, h^{(n)}\) solutions of  \eqref{eq:linearhomogeneous} such that
\begin{equation}
\label{a}
|\psi^{(n)} |_{\sharp,0}=1, \qquad (  |h^{(n)} |_{\sharp\sharp,0} + \varepsilon_n | \log\varepsilon_n|^\frac{1}{2} \|h^{(n)}\|_{**} ) \to 0,
\end{equation}
as $n\to\infty$.

We first work near the vortices and notice that, by symmetry, it is enough to consider the vortex at $+\dd$. We work  with the function $\phi_j^{(n)}(z)=i |\log\varepsilon|^{-1}  W(z)\psi^{(n)}(z+\dd_j)$.
Since $|\psi^{(n)} |_{\sharp,0}=1$ from Arzela-Ascoli's Theorem we can  extract a subsequence such that \( \tilde{\phi}^{(n)}_j \rightarrow \phi_0\) in \(C^0_{\text{loc}}(\R^2)\).  Passing to the limit in  \eqref{eq:linearhomogeneous}, we see that
\begin{equation*}
L^0(\phi_0)=0 \text{ in } \R^2,
\end{equation*}
with $L^0$ defined in \eqref{L0}. 
The function
$\phi_0$ inherits the symmetry
$\phi_0(\bar{z})=\overline{\phi_0(z)}$
and satisfies $\phi_0\in L^\infty_{ \text{loc}}$.
Moreover, writing $\phi_0 = i W \psi_0$, $\psi_0 = \psi_0^1 + i \psi_0^2$,  we have
\begin{align*}
|\psi_0^1(z)|\leq |z| , \quad |\psi_0^2(z)|\leq 1,
\quad |z|>2 .
\end{align*}
Thanks to the above estimate and  \ Lemma \ref{lem:ellipticestimatesL0-b} we deduce
\begin{equation*}
\phi_0=c_1W_{x_1},
\end{equation*}
for some $c_1\in \R$.
On the other hand, we can pass to the limit in the orthogonality condition
$$\RE \int_{B(0,4)} \chi\bar{\phi}_j^{(n)} W_{x_1}=0,$$
and obtain that necessarily $c_1=0$. Hence $\phi_j^{(n)} \rightarrow 0$ in $C^0_{\text{loc}}(\R^2)$. 
We can also apply the same argument near $-\dd$ 
and get
\[
\frac{\psi^{(n)} }{|\log\varepsilon_n|} \to 0 
\]
uniformly on compact sets of  $\{\rho_1 \geq 1, \rho_2\geq 1\}$ as $\varepsilon_n\to0$.

In what follows in this proof we work  in the region 
\[
\tilde D_{R_0} = \{R_0<\rho_1<R_\varepsilon\}\cap \{ x_2 > 0\} ,
\]
where  $R_0>0$ is fixed large and $R_\varepsilon$ is given by \eqref{Reps}.

We use barriers to estimate $\psi_2^{(n)}(z)$ in $\tilde D_R$.
By the symmetries of $\psi_2^{(n)}$ we get the estimates for all $2<\rho_1<R_\varepsilon$.
Let us  write equation \eqref{eqPsi200} as 
\begin{align*}
\Delta\psi_2 
+ \left(\frac{\nabla w_1}{w_1}+\frac{\nabla w_2}{w_2} \right) \nabla \psi_2
-2 |V_d|^2 \psi_2 
+ \varepsilon^2 \partial_{ss}^2 \psi_2
+2 \varepsilon^2 \left(\frac{\p_s w_1}{w_1}+\frac{\p_s w_2}{w_2} \right) \partial_s \psi_2
= \tilde{p}_2
\end{align*}
where
\begin{align*}
\tilde{p}_2=h_2
- \nabla (\theta_1+\theta_2) \nabla \psi_1-2\e^2\p_s(\theta_1+\theta_2)\p_s \psi_1
+4 \varepsilon^2 \partial_s \psi_1.
\end{align*}
%
We observe that in $\tilde D_{R_0}$ it holds
\begin{align*}
\Bigl|
\nabla (\theta_1+\theta_2) \nabla \psi_1 ^{(n)}
\Bigr| 
&
\leq   \frac{C }{\rho_1}
\log\Bigl(\frac{2R_\varepsilon}{\rho_1}\Bigr)
|\psi_1^{(n)}|_{\sharp,1}
\\
\varepsilon_n^2
\left| 
\p_s(\theta_1+\theta_2)\p_s \psi_1
\right| 
& \leq \frac{C }{\rho_1 |\log\varepsilon_n|}
\log\Bigl(\frac{2R_\varepsilon}{\rho_1}\Bigr)
|\psi_1^{(n)}|_{\sharp,1} 
\\
\varepsilon_n^2 | \partial_s \psi_1^{(n)}| 
&\leq  
\frac{C}{\rho_1 |\log\varepsilon_n|}
\log\Bigl(\frac{2R_\varepsilon}{\rho_1}\Bigr)
|\psi_1^{(n)}|_{\sharp,1}.
\end{align*}
\hide {\cb
Indeed, 
\begin{align*}
\Bigl|
\nabla (\theta_1+\theta_2) \nabla \psi_1 
\Bigr| 
& \leq 
\frac{C}{\rho_1} |\nabla \psi_1|
\\
& \leq  \frac{C }{\rho_1}
\log\Bigl(\frac{2R_\varepsilon}{\rho_1}\Bigr)
|\psi_1^{(n)}|_{\sharp,1}
\end{align*}
}
\hide {\cb
Indeed
\begin{align*}
\varepsilon^2 |\partial_s \psi_1 |
& \leq 
\varepsilon^2 
\Bigl[
\tilde d  |\partial_{\rho_1} \psi_1|
+
\Bigl(
1+\frac{\tilde d}{\rho_1} 
\Bigr)|\partial_{\theta_1} \psi_1|
\Bigr]
\\
& \leq 
\varepsilon
\Bigl[
|\log\varepsilon|^{-\frac{1}{2}}
|\nabla \psi_1|
+
(
\varepsilon \rho_1+ |\log\varepsilon|^{-\frac{1}{2}}
)|\nabla \psi_1|
\Bigr]
\\
& \leq 
\varepsilon
|\log\varepsilon|^{-\frac{1}{2}}
|\nabla \psi_1|
\\
& \leq 
\varepsilon
|\log\varepsilon|^{-\frac{1}{2}}
\log\Bigl(\frac{2R_\varepsilon}{\rho_1}\Bigr)
|\psi_1^{(n)}|_{\sharp,1}
\\
& \leq 
\frac{C}{\rho_1 |\log\varepsilon|}
\log\Bigl(\frac{2R_\varepsilon}{\rho_1}\Bigr)
|\psi_1^{(n)}|_{\sharp,1}
\end{align*}
}
Using the a priori estimate of Lemma~\ref{FirstEstimate} we find that
\begin{align}
\label{boundPsi2}
\|\psi^{(n)} \|_{*} \leq C \|h^{(n)} \|_{**} = o(1)\varepsilon_n^{-1}
|\log \varepsilon_n|^{-\frac{1}{2}} .
\end{align}
Thus writing
\[
\psi^{(n)} = \psi^{(n)}_1+i\psi^{(n)}_2
\]
we have
\begin{align}
\nonumber
|\psi_1^{(n)}(z)|&\leq o(1)\varepsilon_n^{-1}
|\log \varepsilon_n|^{-\frac{1}{2}}  ,
\\
\nonumber
|\psi_2^{(n)}(z)| + |\nabla \psi_2^{(n)}(z)|&\leq 
o(1)
\varepsilon_n^{-1}
|\log \varepsilon_n|^{-\frac{1}{2}} 
\Bigl( \frac{1}{\rho_1^{2-\sigma}} + \frac{1}{\rho_2^{2-\sigma}}
\Bigr)
\end{align}
for $2<|z|<\frac{1}{\varepsilon_n}$ 
with $o(1)\to0$
as $n\to\infty$.
We note that for $|z-\dd_j|=R_\varepsilon$, 
\begin{align*}
|\psi_2^{(n)}(z)|\leq \frac{\|\psi^{(n)}\|_*}{R_\varepsilon^{2-\sigma}}
= \frac{o(1) R_\varepsilon}{R_\varepsilon^{2-\sigma}}
= o(1) ,
\end{align*}
as $n\to \infty$ by \eqref{boundPsi2}.
We use as a barrier the function
\begin{align*}
\tilde{\mathcal{B}}_2 = 
\frac{C}{\rho_1^{1+\sigma}} ( 
|h^{(n)} |_{\sharp\sharp,0} 
+\|\psi_2^{(n)}\|_{L^\infty(\rho_1=R_{\varepsilon_n})}  )
+ \frac{C}{\rho_1}\log\Bigl(\frac{2R_\varepsilon}{\rho_1}\Bigr)
\left(  |\psi_1^{(n)}|_{\sharp,1} 
+
\frac{\|\psi_2^{(n)}\|_{L^\infty(\rho_1=R_0)} }{ |\log\varepsilon_n |}
\right)
\end{align*}
where \(C>0\) is a large fixed constant.
We note that 
\begin{align}
\label{boundB2}
\tilde{\mathcal B}_2
\leq \frac{b_n}{\rho_1^{1-\sigma}}
+ 
\frac{1}{\rho_1}\log\Bigl(\frac{2R_\varepsilon}{\rho_1}\Bigr)
\left( C  |\psi_1^{(n)}|_{\sharp,1}   + b_n
\right) 
\end{align}
in $\tilde D_{R_0}$ where $b_n\to 0$ as $n\to\infty$.
By the maximum principle  and elliptic  estimates we get
\begin{align}
\label{estPsi2}
|\psi_2^{(n)}|
+|\nabla\psi_2^{(n)}|
\leq  \tilde{\mathcal B}_2
\end{align}
in $\tilde D_{R_0}$.

Next we use barriers to estimate $\psi_1^{(n)}$ in $\tilde D_{R_0}$.
By the symmetries of $\psi_1^{(n)}$ we get the estimates for all $2<\rho_1<R_\e$.
Let us  write Equation \eqref{eqPsi100} as 
\begin{align*}
\Delta \psi_1 
+\left(\frac{\nabla w_1}{w_1}+\frac{\nabla w_2}{w_2} \right) \nabla \psi_1
+ \varepsilon^2 \partial_{ss}^2 \psi_1
+ 2\varepsilon^2 \left(\frac{\p_sw_1}{w_1}+\frac{\p_sw_2}{w_2} \right) \partial_s \psi_1
= p_1
\end{align*}
where
\begin{align*}
p_1 =h_1
+ \nabla (\theta_1+\theta_2) \nabla \psi_2+2 \e^2\p_s(\theta_1+\theta_2)\p_s \psi_2
- 4 \varepsilon^2 \partial_s \psi_2  
\end{align*}

We find that  in $\tilde D_R$ the following estimates hold:
\begin{align*}
\left|
\nabla (\theta_1+\theta_2) \nabla \psi_2^{(n)}
\right|
& \leq 
\frac{C}{\rho_1} \tilde{\mathcal B}_2\\
\e_n^2\left|\p_s(\theta_1+\theta_2)\p_s \psi_2^{(n)} \right| & \leq \frac{C}{\rho_1 \log\varepsilon_n|} \tilde{\mathcal B}_2
\\
\e_n^2 |\p_s \psi_2^{(n)}|
&\leq 
\frac{C}{\rho_1 |\log\varepsilon_n|}
\tilde{\mathcal B}_2
\end{align*}
\hide {\cb
Indeed, 
\begin{align*}
\Bigl|
\nabla (\theta_1+\theta_2) \nabla \psi_2
\Bigr| 
& \leq 
\frac{C}{\rho_1} |\nabla \psi_2|
\\
& \leq  \frac{C}{\rho_1} \tilde{\mathcal B}_2
\end{align*}
}
\hide {\cb
Indeed
\begin{align*}
\varepsilon^2 |\partial_s \psi_2 |
& \leq 
\varepsilon^2 
\Bigl[
\tilde d  |\partial_{\rho_1} \psi_2|
+
\Bigl(
1+\frac{\tilde d}{\rho_1} 
\Bigr)|\partial_{\theta_1} \psi_2|
\Bigr]
\\
& \leq 
\varepsilon
\Bigl[
|\log\varepsilon|^{-\frac{1}{2}}  |\nabla\psi_2|
+
\Bigl(
\varepsilon \rho_1 + |\log\varepsilon|^{-\frac{1}{2}}
\Bigr)|\nabla \psi_2|
\Bigr]
\\
& \leq
\varepsilon |\log\varepsilon|^{-\frac{1}{2}}
|\nabla \psi_2|
\\
& \leq
\frac{C}{\rho_1 |\log\varepsilon|}
|\nabla \psi_2|
\end{align*}
}

Hence using  \eqref{boundB2} and \eqref{estPsi2} we get
\begin{align*}
|p_1|\leq  
\frac{b_n}{\rho_1}
+ 
\frac{1}{\rho_1^2}\log\Bigl(\frac{2R_\varepsilon}{\rho_1}\Bigr)
\left( C  |\psi_1^{(n)}|_{\sharp,1}   + b_n
\right) 
\end{align*}
for a new sequence $b_n\to0$.

Using Lemma ~\ref{lem:estimates-3b} for part of the right hand side and the supersolution $\log\Bigl(\frac{2R_\varepsilon}{\rho_1}\Bigr)
\left( C  |\psi_1^{(n)}|_{\sharp,1}   + b_n
\right) $
we conclude that 
\begin{align}
\nonumber
|\psi_1^{(n)}(z)| &\leq 
C b_n  \rho_1 \log\Bigl(\frac{2R_\varepsilon}{\rho_1}\Bigr)
+
\log\Bigl(\frac{2R_\varepsilon}{\rho_1}\Bigr)
\left( C  |\psi_1^{(n)}|_{\sharp,1}   + b_n
\right) 
\\
\nonumber
& \leq 
C \rho_1 \log\Bigl(\frac{2R_\varepsilon}{\rho_1}\Bigr)
\Bigl( b_n + \frac{  |\psi_1^{(n)}|_{\sharp,1} }{R_0} \Bigr).
\end{align}
This and standard elliptic estimates yield
\begin{align*}
|\psi_1^{(n)}|_{1,\sharp}
\leq
C \Bigl( b_n + \frac{  |\psi_1^{(n)}|_{\sharp,1} }{R_0} \Bigr).
\end{align*}
Choosing $R_0>0$ large and fixed we get 
\[
|\psi_1^{(n)}|_{1,\sharp}\to0\quad\text{as }n\to\infty.
\]
Using this and \eqref{boundB2}, \eqref{estPsi2} we obtain
\[
|\psi_2^{(n)}|_{2,\sharp}\to0\quad\text{as }n\to\infty.
\]
This contradicts the assumption \eqref{a} and we obtain
\eqref{claimSharp0}. With this inequality and standard Schauder estimates with deduce \eqref{claimSharp}. 
\end{proof}

As an intermediate step to obtain Proposition~\ref{prop:sharp2b} we consider the symmetry properties of the solution constructed in Proposition~\ref{prop:linearfull}, when the right hand side has symmetries. More precisely, 
let us consider the local symmetry condition
\begin{align}
\label{symmetryhe}
h(\mathcal{R}_j z ) = - \overline{h(z)}  ,\quad 
|z-\tilde d_j| < 2 R_\varepsilon , \quad j=1,2.
\end{align}

\begin{lemma}
\label{lemma:sharp-even}
Suppose that $h$ satisfies the symmetries \eqref{eq:eqsymmetriesofpsi} and \eqref{symmetryhe}.
We assume that
\[
\| h \|_{**} <\infty.
\]
Then there exist  $\psi^s $, $ \psi^*$ such that the solution 
$\psi$ to \eqref{eq:linear} with $\|\psi\|_*<\infty$
can be written as $\psi = \psi^s + \psi^*$ with the estimates
\begin{align*}
\| \psi^s \|_* + \| \psi^* \|_* &\leq C  \| h \|_{**} 
\\
| \psi^* |_\sharp &\leq C  \varepsilon |\log\varepsilon|^{\frac{1}{2}}   \| h \|_{**} .
\end{align*}
Moreover $(\psi^s, \psi^*)$ define  linear operators of $h$,  $\psi^s$ has its
support in $B_{R_\varepsilon}(\tilde d_1)  \cup B_{R_\varepsilon}(\tilde d_2)$ 
and satisfies
\begin{align}
\label{symmetryPsyE}
\psi^s(\mathcal{R}_j z ) &= - \overline{\psi^s(z)} ,\quad 
|z-\tilde d_j| <  R_\varepsilon .
\end{align}
\end{lemma}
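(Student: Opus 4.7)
The plan is to apply Proposition~\ref{prop:linearfull} to produce the global solution $\psi=T_\varepsilon(h)$ of \eqref{eq:linear} (with $\|\psi\|_*\leq C\|h\|_{**}$) and then to split it into a piece carrying the local even symmetry and a small remainder. I set
\[
\psi^s(z) := \sum_{j=1}^2 \eta_{j,R_\varepsilon/2}(z)\, \tfrac{1}{2}\bigl[\psi(z) - \overline{\psi(\mathcal R_j z)}\bigr] ,\qquad \psi^* := \psi - \psi^s .
\]
In the translated variable $\zeta = z - \tilde d_j$, $\mathcal R_j$ is the reflection $\zeta\mapsto -\overline\zeta$, so the averaging builds in exactly the symmetry \eqref{symmetryPsyE}. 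Combined with the disjointness of the cutoff supports (for $\varepsilon$ small, $R_\varepsilon\leq \tilde d/2$) and a direct check that $\psi^s$ also inherits the global symmetries \eqref{eq:eqsymmetriesofpsi}, this yields that $\psi^s$ is supported in $B_{R_\varepsilon}(\tilde d_1)\cup B_{R_\varepsilon}(\tilde d_2)$, depends linearly on $h$, and $\|\psi^s\|_* + \|\psi^*\|_* \leq C\|h\|_{**}$ follows directly from the definition of $\|\cdot\|_*$.

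Next I look at the equation satisfied by $\psi^*$. The parity computation $W_{x_1}(-\overline\zeta)=\overline{W_{x_1}(\zeta)}$ combined with the even identity $(iW\tilde\psi^s)(-\overline\zeta)=-\overline{(iW\tilde\psi^s)(\zeta)}$, valid in $B_4(\tilde d_j)\subset B_{R_\varepsilon/2}(\tilde d_j)$, forces $\mathrm{Re}\int_{B(0,4)} \chi\,\overline{\phi_j^s}\,W_{x_1} = 0$. Therefore $\psi^*$ inherits both the orthogonality conditions and the global symmetries \eqref{eq:eqsymmetriesofpsi}, and solves $\mathcal L^\varepsilon \psi^*=H$ with
\[
H \;=\; h - \mathcal L^\varepsilon \psi^s + c\sum_{j=1}^2 \frac{\chi_j}{iW(z-\tilde d_j)}(-1)^j W_{x_1}(z-\tilde d_j) .
\]
The aim is to apply Lemma~\ref{lemma:aprioriSharp} to $\psi^*$; this requires $|H|_{\sharp\sharp}\leq C\varepsilon\sqrt{|\log\varepsilon|}\,\|h\|_{**}$ together with $\|H\|_{**}\leq C\|h\|_{**}$.

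The bound $\|H\|_{**}\leq C\|h\|_{**}$ is immediate from $\|\psi\|_*\leq C\|h\|_{**}$, so the real work is in the $|\cdot|_{\sharp\sharp}$ estimate, which only sees $\rho_j<R_\varepsilon$. In $B_{R_\varepsilon/2}(\tilde d_j)$ where the cutoff equals $1$ and $\psi^s=\psi^{e,j}$, the leading part of $\mathcal L^\varepsilon$ in translated variables is $L^0$ by \eqref{eq:defL_j}; $L^0$ commutes exactly with the reflection $\zeta\mapsto -\overline\zeta$ and preserves the even/odd splitting, while the coupling terms involving $\alpha_j$ and the $\varepsilon^2$ block in \eqref{eq:defL_j} are of size $\varepsilon\sqrt{|\log\varepsilon|}$ by \eqref{eq:estimatesonalpha_j}. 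Since $h$ is purely even near $\tilde d_j$ by \eqref{symmetryhe} while $\chi_j W_{x_1}/(iW)$ is purely odd, projecting $\mathcal L^\varepsilon\psi = h + c\cdot(\text{odd kernel})$ onto the even subspace yields $L^0\psi^{e,j} = h + O(\varepsilon\sqrt{|\log\varepsilon|})\,\|\psi\|_*$; inside $B_{R_\varepsilon/2}$ therefore $H$ reduces to the odd kernel term $c\cdot\chi_j W_{x_1}/(iW)$ plus errors of order $\varepsilon\sqrt{|\log\varepsilon|}\,\|h\|_{**}$. In the annular transition $R_\varepsilon/2<\rho_j<R_\varepsilon$ the cutoff commutator $[\mathcal L^\varepsilon,\eta_{j,R_\varepsilon/2}]\psi^{e,j}$ involves derivatives of $\eta$ of size $R_\varepsilon^{-1}$ and $R_\varepsilon^{-2}$; these produce a contribution whose $|\cdot|_{\sharp\sharp}$ seminorm, after accounting for the weight $\rho_j^{-1}$ at scale $\rho_j\sim R_\varepsilon$, is again $O(\varepsilon\sqrt{|\log\varepsilon|})\,\|h\|_{**}$. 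Finally, the same parity argument applied to the formula for $c$ derived in the proof of Proposition~\ref{prop:linearfull} makes the leading term $\mathrm{Re}\int h_j\overline{W_{x_1}}$ vanish, so $|c|\leq C\varepsilon\sqrt{|\log\varepsilon|}\,\|h\|_{**}$ and the kernel contribution is of the right order.

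Collecting these bounds and invoking Lemma~\ref{lemma:aprioriSharp} applied to $\psi^*$ gives $|\psi^*|_\sharp \leq C(|H|_{\sharp\sharp} + \varepsilon\sqrt{|\log\varepsilon|}\|H\|_{**}) \leq C\varepsilon\sqrt{|\log\varepsilon|}\|h\|_{**}$, as required. The main obstacle is the explicit commutation analysis inside $B_{R_\varepsilon/2}(\tilde d_j)$: extracting the even projection of $\mathcal L^\varepsilon\psi^{e,j}$ and quantifying the non-symmetric corrections (from $\alpha_j$, from the $\varepsilon^2$ terms, and from the localization $\tilde\eta E/V_d$ in \eqref{calLeps}) in the stronger $|\cdot|_{\sharp\sharp}$ seminorm, where the saving of $\varepsilon\sqrt{|\log\varepsilon|}$ comes from a combination of the smallness of $\alpha_j-1$ and the factor $R_\varepsilon^{-1}\sim \varepsilon\sqrt{|\log\varepsilon|}$ picked up in the annular commutator region.
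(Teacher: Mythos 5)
Your construction of $\psi^s$ differs genuinely from the paper's: you extract the localized even part $\eta_{j,R_\varepsilon/2}\psi^{e,j}$ of the already-constructed solution $\psi=T_\varepsilon(h)$, whereas the paper builds $\psi^s$ by solving a \emph{fresh} auxiliary linear problem $\mathcal L^\varepsilon_{s,j}\psi^{2,j}=h\,\eta_{j,2R_\varepsilon}$ with a specially designed symmetry-preserving operator $\mathcal L^\varepsilon_{s,j}$, and then localizes that. Both routes lead to the same sum $\psi=\psi^s+\psi^*$ by uniqueness, but the decompositions (and hence the individual functions $\psi^s,\psi^*$) are different. Your route is more direct in that it avoids setting up a separate solvability theory for $\mathcal L^\varepsilon_s$, and your verification that the leading term of $c$ vanishes by parity, and that the cutoff commutators in $R_\varepsilon/2<\rho_j<R_\varepsilon$ pick up a factor $R_\varepsilon^{-1}\sim\varepsilon\sqrt{|\log\varepsilon|}$, is exactly in the spirit of what the paper's ``lengthy but direct calculations'' require. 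The paper's route trades that extra setup for a cleaner residual $\tilde h=h-\mathcal L^\varepsilon\psi^s$, which inside $B_{R_\varepsilon/2}(\tilde d_j)$ is identically $-\mathcal L^\varepsilon_{r,j}\psi^{2,j}$ (the even $h$ cancels exactly against $\mathcal L^\varepsilon_{s,j}\psi^{2,j}$), so the bookkeeping for $|\tilde h|_{\sharp\sharp}$ only ever touches the small remainder operator.

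There is one imprecision you should fix. You say the ``$\varepsilon^2$ block in \eqref{eq:defL_j}'' is of size $\varepsilon\sqrt{|\log\varepsilon|}$, but the dominant part of $\varepsilon^2\partial_{ss}^2$, namely the coefficient $\varepsilon^2\tilde d^2\sim \hat d^2/|\log\varepsilon|$ multiplying second derivatives in the translated variables, is of order $1/|\log\varepsilon|$, which is \emph{not} small enough for the $|\cdot|_{\sharp\sharp}$ bound you want. The correct statement is that $\varepsilon^2\partial_{ss}^2$ must itself be split into a parity-preserving piece (the $\tilde d^2$-terms, of order $1/|\log\varepsilon|$, which drop out of $H$ because, like $L^0$, they commute with the reflection $\zeta\mapsto-\overline\zeta$) and a parity-breaking piece (the $\tilde d$-linear terms, of order $\varepsilon^2\tilde d\sim\varepsilon/\sqrt{|\log\varepsilon|}$, which are small). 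This is exactly the content of the paper's $\mathcal L^\varepsilon_{s,1}$ versus $\mathcal L^\varepsilon_{r,1}$ split. Once you organize the computation in terms of the parity-preserving part (which is $L^0$ \emph{plus} the $\tilde d^2$-block, not $L^0$ alone) and the genuinely parity-breaking remainder, the identity $H=(\mathcal L^\varepsilon_{r,j}\psi)^{e,j}-\mathcal L^\varepsilon_{r,j}\psi^{e,j}+c(\cdots)$ inside $B_{R_\varepsilon/2}(\tilde d_j)$ gives the desired $|H|_{\sharp\sharp}\lesssim\varepsilon\sqrt{|\log\varepsilon|}\,\|h\|_{**}$, and the rest of your argument goes through.
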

\begin{proof}
To  construct the function $\psi^s$ we split the operator  $\mathcal L^\varepsilon$ (c.f. \eqref{calLeps})
into a part $ \mathcal L^\varepsilon_s $ preserving the symmetry \eqref{symmetryPsyE} and a remainder $\mathcal L^\varepsilon_r$.
This splitting depends on which vortex $\tilde d_j$ we are considering for the symmetry \eqref{symmetryPsyE} and thus we write 
$\mathcal{L}_{s,j}^\varepsilon$, 
$\mathcal{L}_{r,j}^\varepsilon$, $j=1,2$.
It is sufficient to consider the vortex at $\tilde d_1$.
We set 
\begin{align*}
\mathcal L ^\varepsilon _{s,1}(\psi)
&=
\Delta \psi 
+ 2\frac{\nabla W^a \nabla \psi}{W^a}
-2i |W^a|^2 \IM(\psi)
\\
& \quad  
+\varepsilon^2 \Biggl[ \tilde d^2 \partial_{\rho_1 \rho_1}^2 \psi \sin(\theta_1)^2
+\frac{\tilde d^2}{\rho_1}
\partial_{\rho_1 \theta_1}^2 \psi   \sin(\theta_1)   \cos(\theta_1) 
+\partial_{\theta_1 \theta_1}^2 \psi \Bigl(1 + \frac{\tilde d^2}{\rho_1^2} \cos^2(\theta_1) \Bigr)
\\
& 
\qquad \qquad
+ \partial_{\rho_1} \psi  \frac{\tilde d^2}{\rho_1} \cos^2(\theta_1) 
-2 \partial_{\theta_1} \psi 
\frac{\tilde d^2}{\rho_1^2} \sin(\theta)\cos(\theta_1) 
\Biggr]
\end{align*}
\begin{align*}
\mathcal L ^\varepsilon _{r,1}(\psi)
&=2\frac{\nabla W^b \nabla \psi}{W^b}
-2 i ( |V_d|^2 -  |W^a|^2 )  \IM(\psi) 
\\
& 
+ \varepsilon^2
\Biggl[
2 \tilde d
\partial_{\rho_1 \theta_1}^2 \psi  
 \sin(\theta_1) 
+2 \partial_{ \theta_1\theta_1}^2 \psi  \frac{\tilde d}{\rho_1} \cos(\theta_1) 
+ \partial_{\rho_1} \psi  \tilde d \cos(\theta_1)
-  \partial_{\theta_1} \psi
\frac{\tilde d}{\rho_1} \sin(\theta_1) 
\Biggr]
\\
& 
+ 
\varepsilon^2 \Bigl( \frac{2\partial_s V_d }{V_d}
- 4i  \Bigr) 
\Bigl[  \partial_{\rho_1} \psi \tilde d \sin(\theta_1)
+  \Bigl(1 + \frac{\tilde d}{\rho_1} \cos(\theta_1) \partial_{\theta_1}\psi  \Bigr) \Bigr] .
\end{align*}

We use the same cut-off functions defined in \eqref{def:etajR} and solve
\begin{align}
\nonumber
\left\{
\begin{aligned}
& \L_s^\e(\psi^{2,1})=h  \eta_{1,2R_\varepsilon}
\quad\text{in }\R^2
\\
& \RE\int_{B(0,4)} \chi \overline{\phi^{2,1}}W_{x_1}=0, \text{ with }\phi^{2,1}(z)=iW(z)\psi^{2,1}(z+\dd_1)\\
&\psi^{2,1} \text{ satisfies } \psi^{2,1}(\bar z)  = - \psi^{2,1}(z).
\end{aligned}
\right.
\end{align}
This is obtained as variant of Proposition~\ref{prop:linearfull} with the same proof.
Note that there is no need to project the right hand side, since it is automatically orthogonal to the kernel by symmetry, and note also that the orthogonality condition for the solution holds also by symmetry.
Recall that $h$ satisfies \eqref{symmetryhe} and we get a solution $\psi_{2,1}$ satisfying \eqref{symmetryPsyE}
with the estimate
\[
\| \psi^{2,1} \|_* 
\leq C \| h \|_{**} .
\]
In a similar way we construct $\psi^{2,2}$ centered at the vortex $\tilde d_2$ and define
\begin{align}
\label{defPsiS}
\psi^s = \eta_{1,\frac{1}{2}R_\varepsilon} \psi^{2,1} ,
+ \eta_{2,\frac{1}{2}R_\varepsilon} \psi^{2,2} .
\end{align}
Note that we have the estimate
\[
\| \psi^s \|_* 
\leq C \| h \|_{**} .
\]

Let
\begin{align*}
\tilde h &:= h - \mathcal L^\varepsilon_{s,1}(\eta_{1,\frac{1}{2}R_\varepsilon} \psi^{2,1} ) -  \mathcal L^\varepsilon_{r,1}(\eta_{1,\frac{1}{2}R_\varepsilon} \psi^{2,1} ) 
- \mathcal L^\varepsilon_{s,2}(\eta_{2,\frac{1}{2}R_\varepsilon} \psi^{2,2} ) -  \mathcal L^\varepsilon_{r,2}(\eta_{2,\frac{1}{2}R_\varepsilon} \psi^{2,2} )  .
\end{align*}
Some lengthy but direct calculations show that
\begin{align}
\nonumber
\| \tilde h \|_{**}
& \leq C  ( \| h \|_{**} + \| h^* \|_{**} ) 
\\
\nonumber
|\tilde h |_{\sharp\sharp} 
& \leq 
C \varepsilon |\log \varepsilon|^{\frac{1}{2}}   ( \| h \|_{**} + \| h^* \|_{**} ) .
\end{align}
Then we solve, using Proposition~\ref{prop:linearfull},
\begin{align}
\nonumber
\left\{
\begin{aligned}
& \L^\e(\tilde \psi)=\tilde h +\tilde c\sum_{j=1}^2\frac{\chi_j}{iW(z-\dd_j)} (-1)^jW_{x_1}(z-\dd_j)
\quad\text{in }\R^2
\\
& \RE\int_{B(0,4)} \chi \overline{\tilde \phi_{j}}W_{x_1}=0, \text{ with } \tilde \phi_{j}(z)=iW(z)\tilde \psi(z+\dd_j)\\
&\tilde \psi \text{ satisfies the symmetry }  \eqref{eq:eqsymmetriesofpsi},
\end{aligned}
\right.
\end{align}
and obtain, using also Lemma~\ref{lemma:aprioriSharp},
\begin{align*}
\| \tilde \psi \|_* &\leq C   \| \tilde h \|_{**}
\\
| \tilde \psi |_\sharp &\leq C  
( | \tilde h |_{\sharp\sharp} + \varepsilon |\log\varepsilon|^{\frac{1}{2}} \|\tilde h \|_{**} ).
\end{align*}
Finally we set 
\begin{align}
\label{defPsiStar}
\psi^* &=  \tilde \psi.
\end{align}
The functions $\psi^s$, $\psi^*$ defined in \eqref{defPsiS}, \eqref{defPsiStar} satisfy the stated properties.
\end{proof}

\begin{proof}[Proof of Proposition~\ref{prop:sharp2b}]
Let us define
\[
\tilde h = h-h^o
\]
so that $h=\tilde h + h_\alpha^o + h_\beta^o $.
Let $\tilde \psi$, $\tilde \psi_\alpha$, $\tilde \psi_\beta$ be the solution with finite $\| \ \|_{*}$-norm of \eqref{eq:linear} with right hand sides $\tilde h$, $h_\alpha^o$, $h_\beta^o$ given by Proposition~\ref{prop:linearfull}.
Then  $\psi = \tilde \psi + \tilde \psi_\alpha + \tilde \psi_\beta$ and we have the estimates
\begin{align*}
\|\tilde \psi\|_*  & \lesssim \|\tilde h\|_*\\
\|\tilde \psi_j\|_*  & \lesssim \| h^o_j\|_* , \quad j=\alpha,\beta.
\end{align*}
We have  $\psi^o = \tilde \psi^o + \tilde\psi_\alpha^o + \tilde\psi_\beta^o$.
We define 
\begin{align*}
\psi_\alpha^o = \tilde \psi^o + \tilde\psi_\alpha^o , \quad
\psi_\beta^o =  \tilde\psi_\beta^o .
\end{align*}
Note that by Lemma~\ref{lemma:aprioriSharp}
\begin{align*}
|  \tilde\psi_\alpha^o  |_\sharp 
&\lesssim
|  \tilde\psi_\alpha  |_\sharp  \lesssim
|h_\alpha^o |_{\sharp\sharp} + \varepsilon | \log\varepsilon|^\frac{1}{2} \|h_\alpha^o\|_{**}  .
\end{align*}

According to Lemma~\ref{lemma:sharp-even} we can write $\tilde \psi = \psi^s + \psi^*$ with $\psi^s$, $\psi^*$ satisfying the properties stated in that lemma, from which we get
\begin{align*}
| \tilde \psi^o |_\sharp
& =
| ( \psi^* ) ^o|_\sharp
 \lesssim
|  \psi^* |_\sharp
\lesssim 
\varepsilon |\log\varepsilon|^{\frac{1}{2}}
\| \tilde h \|_{**}.
\end{align*}
Therefore
\begin{align*}
|\psi_\alpha^o|_\sharp 
\lesssim
|h_\alpha^o |_{\sharp\sharp} 
+ \varepsilon | \log\varepsilon|^\frac{1}{2} ( \|h_\alpha^o\|_{**}  
+  
\| \tilde h \|_{**} )
\end{align*}
and this proves \eqref{est:prop5.3-1}.

On the other hand
\begin{align*}
\| \psi_\alpha^o  \|_* 
& \leq
\| \tilde \psi^o  \|_* 
+\| \tilde \psi_\alpha^o  \|_* 
\lesssim
\| \tilde \psi  \|_* 
+\| \tilde \psi_\alpha  \|_* 
\lesssim
\| \tilde h  \|_* 
+\| h_\alpha^o   \|_* 
\\
\| \psi_\beta^o  \|_* 
& =
\| \tilde \psi_\beta^o  \|_* 
\lesssim
\| \tilde \psi_\beta \|_* 
\lesssim
\| h_\beta^o   \|_* 
\end{align*}
and from here \eqref{est:prop5.3-2} follows.
\end{proof}

\section{A projected nonlinear problem}
We consider now the nonlinear projected problem
\begin{equation}\label{eq:nonlinear}
\left\{
\begin{split}
&\L^\e(\psi)=R+\N(\psi)+c\sum_{j=1}^2\frac{\chi_j(z)}{iW(z-\dd_j)} (-1)^jW_{x_1}(z-\dd_j)\;\; \text{ in } \R^2, \\
&\RE \int_{\R^2} \chi \overline{\phi_j}W_{x_1}=0, \ \text{ with }\phi_j(z)=iW(z)\psi(z+\dd_j), \ j=1,2, \\
&\psi \text{ satisfies } \eqref{eq:eqsymmetriesofpsi}.
\end{split}
\right.
\end{equation}
Using the operator $T_\e$ introduced in Proposition \ref{prop:linearfull} we can rewrite this equation in the form of a fixed point problem as
\begin{equation}\nonumber
\psi=T_\e\left(R+\N(\psi) \right)=:G_\e(\psi).
\end{equation}
\begin{proposition}\label{prop:nonlinear}
There exists a constant $C>0$ depending only on $0<\alpha,\sigma<1$, such that for all $\e$ sufficiently small there exists a unique solution $\psi_\e$ of \eqref{eq:nonlinear}, that satisfies
\begin{equation*}
\|\psi_\e\|_*\leq \frac{C}{|\log \e|}.
\end{equation*}
Furthermore $\psi_\e$ is a continuous function of the parameter $\d:=\sqrt{|\log \e|}d$
\begin{eqnarray}\label{estimatesPhi}
|\psi_\e^o|_{\sharp} \leq  C \e \sqrt{|\log \e|},
\end{eqnarray}
where $\psi_\e^o$ is defined according to \eqref{def-psio}.
\end{proposition}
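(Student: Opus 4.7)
The problem \eqref{eq:nonlinear} is recast as the fixed-point equation $\psi = G_\varepsilon(\psi) := T_\varepsilon(R + \mathcal{N}(\psi))$, where $T_\varepsilon$ is the bounded linear solver constructed in Proposition~\ref{prop:linearfull}. I would apply the Banach contraction principle in the closed ball $\mathcal{B} := \{\psi : \|\psi\|_* \leq K/|\log\varepsilon|\}$ for a large constant $K$ to be chosen. The source size is provided by Proposition~\ref{prop:sizeoferror}, namely $\|R\|_{**} \leq C/|\log\varepsilon|$.

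The core estimate is the quadratic control of the nonlinearity. Inspecting the definition of $\mathcal{N}(\psi)$ right after \eqref{calLeps}, $\mathcal{N}$ is quadratic-or-higher in $\psi$ and $\nabla\psi$: near each vortex one gets $\frac{1}{V_d}L_0(\gamma(\psi))$, $\frac{1}{V_d}S_1(\gamma(\psi))$, $\frac{1}{V_d}N_0(iV_d\psi + \gamma(\psi))$ together with the contribution $\tilde\eta(\cdots)\frac{S(V_d)}{V_d}\psi$, all controllable by $\|V_d\psi\|_{C^{2,\alpha}(\rho_j<3)}$; far from the vortices $\mathcal{N}(\psi) = i(\nabla\psi)^2 + i|V_d|^2(e^{-2\IM(\psi)}-1+2\IM(\psi)) + i\varepsilon^2(\partial_s\psi)^2$, whose pointwise decay matches the weights in $\|\cdot\|_{**}$ provided each factor is bounded in $\|\cdot\|_*$. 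A direct computation in the spirit of \cite{delPinoKowalczykMusso2006} would yield
\[
\|\mathcal{N}(\psi_1) - \mathcal{N}(\psi_2)\|_{**} \leq C\bigl(\|\psi_1\|_* + \|\psi_2\|_*\bigr)\|\psi_1 - \psi_2\|_*
\]
for $\psi_1,\psi_2 \in \mathcal{B}$; in particular $\|\mathcal{N}(\psi)\|_{**} \leq C\|\psi\|_*^2 \leq CK^2/|\log\varepsilon|^2$. Combined with $\|T_\varepsilon h\|_* \leq C\|h\|_{**}$, this shows that for $K$ large and $\varepsilon$ small $G_\varepsilon$ maps $\mathcal{B}$ into itself and is a contraction of constant $O(K/|\log\varepsilon|)$, producing the unique fixed point $\psi_\varepsilon$ with $\|\psi_\varepsilon\|_* \leq C/|\log\varepsilon|$. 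Continuity in $\hat d$ (equivalently in $\d$) follows since $R$, $\mathcal{L}^\varepsilon$ and $\mathcal{N}$ depend smoothly on this parameter and the contraction depends continuously, yielding continuity of the fixed point.

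For the refined bound on $\psi_\varepsilon^o$, I would apply Proposition~\ref{prop:sharp2b} to $h := R + \mathcal{N}(\psi_\varepsilon)$ with the decomposition $h^o_\alpha := R^o_\alpha$ and $h^o_\beta := R^o_\beta + (\mathcal{N}(\psi_\varepsilon))^o$. Proposition~\ref{errorProp2} supplies $|R^o_\alpha|_{\sharp\sharp} \leq C\varepsilon/\sqrt{|\log\varepsilon|}$, $\|R^o_\beta\|_{**} \leq C\varepsilon\sqrt{|\log\varepsilon|}$ and $\|R^e\|_{**} \leq C/|\log\varepsilon|$, while the quadratic bound gives $\|\mathcal{N}(\psi_\varepsilon)\|_{**} \leq C/|\log\varepsilon|^2 \ll \varepsilon\sqrt{|\log\varepsilon|}$, so the hypotheses are met. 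Proposition~\ref{prop:sharp2b} then yields $\psi_\varepsilon^o = \psi^o_\alpha + \psi^o_\beta$ with
\[
|\psi^o_\alpha|_\sharp \leq C\Bigl(\tfrac{\varepsilon}{\sqrt{|\log\varepsilon|}} + \varepsilon|\log\varepsilon|^{1/2}\cdot\tfrac{1}{|\log\varepsilon|}\Bigr) \leq \frac{C\varepsilon}{\sqrt{|\log\varepsilon|}},\qquad \|\psi^o_\beta\|_* \leq C\varepsilon\sqrt{|\log\varepsilon|}.
\]
Since the weights $\rho\log(2R_\varepsilon/\rho)$ in the $|\cdot|_\sharp$-norm are bounded below by a positive constant on the support region $2 \leq \rho_j \leq R_\varepsilon$, one has $|\cdot|_\sharp \lesssim \|\cdot\|_*$ on this region, so $|\psi^o_\beta|_\sharp \leq C\varepsilon\sqrt{|\log\varepsilon|}$ and the stated bound follows.

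The main obstacle is the careful bookkeeping in the nonlinear Lipschitz estimate: the singular coefficients $\nabla V_d/V_d$ near each vortex, the cut-offs $\eta$, $\tilde\eta$ interpolating between the additive and multiplicative ansatz, and the asymmetry between the $\RE(\psi)$ and $\IM(\psi)$ decay rates dictated by $\|\cdot\|_{**}$ must all be tracked so that no spurious factor of $|\log\varepsilon|$ creeps in. A secondary delicacy is verifying, for the final step, that $\mathcal{N}(\psi_\varepsilon)^o$ inherits the reflection symmetry $h^o(\mathcal{R}_j z) = \overline{h^o(z)}$ in $B_{R_\varepsilon}(\tilde d_j)$ required by Proposition~\ref{prop:sharp2b}; this uses the $\mathbb{Z}_2$ symmetries \eqref{eq:eqsymmetriesofpsi} imposed on $\psi_\varepsilon$ and their preservation by the nonlinear operator.
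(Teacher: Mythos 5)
The overall fixed-point framework (rewriting \eqref{eq:nonlinear} as $\psi = T_\varepsilon(R + \mathcal{N}(\psi))$ and running a contraction argument with $\|R\|_{**} \lesssim 1/|\log\varepsilon|$ and $\|\mathcal{N}(\psi)\|_{**} \lesssim \|\psi\|_*^2$) is correct and matches the paper. However, your derivation of the refined bound \eqref{estimatesPhi} contains a fatal arithmetic error, and as a result misses the central mechanism of the proof.

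You claim $\|\mathcal{N}(\psi_\varepsilon)\|_{**} \leq C/|\log\varepsilon|^2 \ll \varepsilon\sqrt{|\log\varepsilon|}$. This inequality is backwards: for small $\varepsilon$ one has $\varepsilon\sqrt{|\log\varepsilon|} \ll 1/|\log\varepsilon|^2$, since $\varepsilon$ tends to $0$ far faster than any negative power of $|\log\varepsilon|$. Consequently your choice $h^o_\beta := R^o_\beta + (\mathcal{N}(\psi_\varepsilon))^o$ yields only $\|h^o_\beta\|_{**} \lesssim 1/|\log\varepsilon|^2$, and then Proposition~\ref{prop:sharp2b} gives $\|\psi^o_\beta\|_* \lesssim 1/|\log\varepsilon|^2$. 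Even granting your (only partially justified) claim that $|\cdot|_\sharp \lesssim \|\cdot\|_*$ on compactly supported functions, this produces $|\psi^o_\beta|_\sharp \lesssim 1/|\log\varepsilon|^2$, which is \emph{much larger} than the required $\varepsilon\sqrt{|\log\varepsilon|}$. Since the whole point of \eqref{estimatesPhi} is to show that the odd part of $\psi$ is of the tiny size $\varepsilon\sqrt{|\log\varepsilon|}$ (so that it does not swamp the $O(\varepsilon\sqrt{|\log\varepsilon|})$ terms in the Lyapunov--Schmidt reduction of Section~7), the estimate you obtain is useless.

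The missing idea is the parity estimate on the nonlinearity, namely
\[
|(\mathcal{N}(\psi))^{o}|_{\sharp\sharp}\leq C\bigl( (|\psi^o|_{\sharp}+ \varepsilon |\log \varepsilon|^{1/2})\|\psi^e\|_{*} +|\psi^o|_{\sharp}^2 \bigr),
\]
which exploits the identity $(fg)^o=[f^e g^o+f^o g^e+f^o g^o]^o$ for functions decomposed into $\pi$-periodic (``even'') and $2\pi$-periodic (``odd'') parts in $\theta_j$: the odd part of any quadratic expression in $\psi$ must contain at least one odd factor, and odd factors are small. Crucially, this estimate depends on $|\psi^o|_\sharp$ itself, so the bound $|\psi^o|_\sharp \leq C\varepsilon\sqrt{|\log\varepsilon|}$ cannot be proved \emph{a posteriori} after solving the fixed point in the $\|\cdot\|_*$-ball; it must be built into the fixed-point space $\mathcal{F}$ from the start, with $\mathcal{N}(\psi)^o$ (or at least its small, $|\cdot|_{\sharp\sharp}$-controlled part) placed in $h^o_\alpha$ rather than $h^o_\beta$. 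Your proposal treats $|\psi^o|_\sharp$ as a consequence rather than as part of the contraction structure, and without the parity estimate the argument does not close.

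A secondary point: your claimed implication $|\cdot|_\sharp \lesssim \|\cdot\|_*$ for functions supported in $B_{R_\varepsilon}(\tilde d_1)\cup B_{R_\varepsilon}(\tilde d_2)$ is justified only for the $\RE(\psi)$ part. For $\IM(\psi)$ the first term in $\|\cdot\|_{2,*}$ involves a weight $\rho_1^{-2+\sigma}+\rho_2^{-2+\sigma}+\varepsilon^{\sigma-2}$ whose $\varepsilon^{\sigma-2}$ summand dominates in the near-vortex region, so a bound on $\|\psi_2\|_{2,*}$ alone does not directly control $|\psi_2|_{\sharp,2}$; one must recover the pointwise decay of $\psi_2$ by integrating the gradient bound from $\rho_j = R_\varepsilon$ (where $\psi^o_\beta$ vanishes) inward. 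This is fixable, but it is not ``bounded below by a positive constant'' as stated.
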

\begin{proof}
We let
\begin{equation*}\begin{split}
\mathcal{F}:=\Bigl\{& \psi:\; \psi \text{ satisfies } \eqref{eq:eqsymmetriesofpsi}, \ \RE \int_{\R^2} \chi_j\overline{\phi_j}W_{x_1}=0, \;\;j=1,2, \;\;\|\psi\|_*\leq\frac{C}{|\log \e|}, \\
& \quad |\psi^o|_{\sharp}\leq C \e \sqrt{|\log \e|} \Bigr\}.
\end{split}\end{equation*}
Endowed with the norm $\| \cdot \|_*$, $\mathcal{F}$ is a Banach space as a closed subset of the Banach space $\{\psi:\;\;\|\psi\|_*<+\infty\}$. We will show that, for $\e$ small enough, $G_\e$ maps $\mathcal{F}$ into itself. Indeed, we need to check that if $\|\psi\|_{*}\leq \frac{C}{|\log \e|}$ then $\|T_\e(E+\N(\psi))\|_*\leq C/|\log \e|$.

Note first that, from Proposition \ref{prop:sizeoferror},
\begin{equation*}
\|R\|_{**}\leq \frac{C}{|\log \e|}.
\end{equation*}
Let us now estimate the size of the nonlinear term. For $\rho_1>3$ and $\rho_2>3$ the nonlinear terms
are
\begin{align*}
i (\nabla \psi)^2  + i |V_d|^2(e^{-2\psi_2}-1+2\psi_2)+i\varepsilon^2 (\partial_s \psi)^2.
\end{align*}

Let us work in the right half plane (so $\rho_1\leq\rho_2$). 
We start with $(\nabla \psi)^2$. 
For $3<\rho_1<\frac{2}{\varepsilon}$  we have
\begin{align*}
|(\nabla \psi)^2| & \leq |\nabla \psi|^2
\leq \frac{\|\psi\|_*^2}{\rho_1^2} .
\end{align*}
For $r>\frac{1}{\varepsilon}$ we use
\begin{align*}
(\nabla \psi)^2 &= (\partial_r \psi)^2 + \frac{1}{r^2} (\partial_s \psi)^2
\end{align*}
and estimate
\begin{align*}
|  (\partial_r \psi)^2 |
= (\partial_r \psi_1)^2 + (\partial_r \psi_2)^2  
\leq \varepsilon^2 \| \psi \|_*^2
\end{align*}
and
\begin{align*}
\frac{1}{r^2} |(\partial_s \psi)^2|
& \leq \frac{1}{r^2}
\Bigl( (\partial_s \psi_1)^2 + (\partial_s \psi_2)^2  \Bigr)
\\
& \leq \frac{1}{r^2} \| \psi \|_*^2 .
\end{align*}
It follows that 
\begin{align*}
\| i (\nabla \psi)^2 \|_{**}  \leq  C \| \psi \|_*^2.
\end{align*}

Next we consider  $i |V_d|^2(e^{-2\psi_2}-1+2\psi_2)$. We note that the real part of this function is zero.
Again we work in the right half plane.
We have, for $\rho_1>3$,
\begin{align*}
| \, |V_d|^2(e^{-2\psi_2}-1+2\psi_2) \, |
& \leq
C |\psi_2|^2 \\
& \leq C(\rho_1^{-2+\sigma}+\varepsilon^{2-\sigma})^2 \| \psi_2 \|_{2,*}^2 ,
\end{align*}
and hence
\begin{align*}
\| i |V_d|^2(e^{-2\psi_2}-1+2\psi_2) \|_{**}
\leq C  \| \psi \|_*^2 ,
\end{align*}

Finally we consider $i \varepsilon^2 (\partial_s \psi)^2$.
We have for $3<\rho_1<\frac{2}{\varepsilon}$
\begin{align*}
| \varepsilon^2 (\partial_s \psi)^2 |
&\leq \varepsilon^2 \tilde d^2 |\partial_{\rho_1} \psi|^2
+ \Bigl( \varepsilon + \frac{\varepsilon\tilde d}{\rho_1}\Bigr)^2
|\partial_{\theta_1} \psi|^2
\\
& \leq 
C (\varepsilon^2 + \rho_1^{-2} ) \|\psi\|_*^2 .
\end{align*}
For $r>\frac{1}{\varepsilon}$
\begin{align*}
| \varepsilon^2 (\partial_s \psi)^2 |
& \leq \varepsilon^2 \|\psi\|_*^2 .
\end{align*}
It follows that 
\begin{align*}
\| i \varepsilon^2 (\partial_s \psi)^2 \|_{**}
\leq  \|\psi\|_*^2 .
\end{align*}

In $\{\l_1 \leq 3 \} \cup \{\l_2\leq 3 \}$ it can be checked that
\begin{equation*}
|iV_d\mathcal{N}(\psi)|\leq C( |D^2 \gamma| +|D \gamma| +|\gamma|
+|\gamma+\phi||\phi|+|\gamma+\phi|^2(1+|\gamma+\phi|+|\gamma|)+|E_d||\phi|+|\nabla \phi|^2)
\end{equation*}
with $\gamma=(1-\eta)V_d \left(e^{i\psi}-1-i\psi \right)$. Thus we obtain that for any $j=1,2$
\begin{equation*}
\|iV_d\N(\psi)\|_{C^\alpha(\{\l_j <3 \}  }\leq C\|\psi\|_*^2+|E||\phi| \leq \frac{C}{|\log \e|^2}.
\end{equation*}
Thus for an appropriate constant $C$ we have that $G_\e:\psi \mapsto T_\e(E+\mathcal{N}(\psi))$ maps the ball $\{ \psi; \|\psi\|_*\leq\frac{C}{|\log\e|}\}$ into itself.

Let us now see  the precise estimates on the ``odd parts" and ``even parts". From Proposition \ref{errorProp2} we know that $R^o$ defined as in \eqref{def-ho},  can be decomposed into $R^o=R_\alpha^o+R^o_\beta$ with
\begin{equation*}
|R^o_\alpha|_{\sharp \sharp}\leq \frac{C\e}{\sqrt{|\log \e|}} \quad \|R^o_\beta\|_{**} \leq C \e \sqrt{|\log \e|}.
\end{equation*}
It remains to prove that 
\begin{equation}\label{eq:estimate_Npsi_sharp}
| \mathcal{N}(\psi)^{o}|_{\sharp\sharp}\leq C\left( (|\psi^o|_{\sharp}+ \e |\log \e|^{1/2})\|\psi^e\|_{*} +|\psi^o|_{\sharp}^2 \right).
\end{equation}
In order to do that we recall that in the decomposition of a function \(f\) in odd and even modes we have that, near \(+\tilde{d}\) the function
\(f^e\) is exactly \(\pi\)-periodic in \(\theta_1\) whereas \(f^o\) is exactly \(2\pi\)-periodic in \(\theta_1\). An analogous statement is true  near \(-\tilde{d}\). Now we can express the product of two functions as
\begin{equation*}
f g =(f^e+f^o)(g^e+g^o)=f^eg^e+f^eg^o+g^ef^o+g^of^o.
\end{equation*}
We see that \(f^eg^e\) is exactly \(\pi\)-periodic and hence \( (fg)^o=[f^eg^o+g^ef^o+f^og^o ]^o\). Thus we arrive at 
\begin{equation}\label{eq:key_decomposition}
\left| (fg)^o \right| \leq \left( |f^o||g^e|+|f^e||g^o|+|f^o||g^o|\right).
\end{equation}
To estimate \(\mathcal{N}(\psi)\) we use a change of variables \((r,s) \rightarrow (\rho_j,\theta_j)=(\rho,\theta)\) and we observe that 
\begin{eqnarray*}
(\nabla \psi)^2= (\p_r \psi)^2+\frac{1}{r^2}(\p_s \psi)^2 =(\p_\l \psi)^2+\frac{1}{\l^2}(\p_\theta\psi)^2,
\end{eqnarray*}
and
\begin{equation*}\begin{split}
\e^2(\p_s \psi)^2 =&\,\e^2 (\p_\theta \psi)^2 +\e^2\dd \left( \sin \theta \p_\l \psi \p_\t \psi+\frac{\cos \t}{\l} (\p_\t \psi)^2 \right) \\
&\,+\e^2 \dd^2 \left(\sin^2 \t (\p_\l \psi)^2+\frac{4\cos \t \sin \t}{\l}\p_\l \psi \p_\t \psi +\frac{\cos^2 \t}{\l^2}(\p_\t \psi)^2\right).
\end{split}\end{equation*}
Thus component-wise we obtain
\begin{equation*}\begin{split}
\left(\tilde{\mathcal{N}}(\psi) \right)_1=&\,2(\p_\l \psi_1)(\p_\l \psi_2)+2(\p_\t \psi_1)(\p_\t \psi_2)\left(\e^2+\frac{1}{\l^2} \right) \\
&\,+\e^2\dd \left( \sin \t[\p_\l \psi_1 \p_\t \psi_2+\p_\t \psi_1 \p_\l \psi_2 ]+\frac{2\cos \t}{\l}\p_\t \psi_1 \p_\t \psi_2 \right) \nonumber \\
&\,+\e^2\dd^2\left( 2\sin^2\t \p_\l \psi_1 \p_\l\psi_2+\frac{4\sin \t \cos \t}{\l}[\p_\l \psi_1\p_\t \psi_2+\p_\t \psi_1\p_\l \psi_2]\right. \\
&\,\left.+\frac{2\cos^2\t}{\l^2}\p_\t\psi_1 \p_\t \psi_2\right),
\end{split}\end{equation*}
\begin{equation*}\begin{split}
\left(\tilde{\mathcal{N}}(\psi)  \right)_2=&\,-(\p_\l \psi_1)^2+(\p_\l \psi_2)^2-\left(\e^2+\frac{1}{\l^2} \right)((\p_\t \psi_2)^2-(\p_\t \psi_1)^2) \\
&\,+\e^2\dd\left(\sin \t (\p_\l \psi_1\p_\t \psi_1+\p_\l\psi_2\p_\t \psi_2)+\frac{\cos \t}{\l}[(\p_\t \psi_2)^2-(\p_\t \psi_1)^2] \right) \\
&\,+\e^2\dd^2 \left(\sin^2\t ((\p_\l \psi_2)^2-(\p_\l \psi_1)^2)+\frac{4\cos \t \sin \t}{\l}(\p_\l \psi_1\p_\t \psi_1+\p_\l \psi_2\p_\t \psi_2)\right. \\
&\,\left.+\frac{\cos^2\t}{\l^2}[(\p_\t \psi_2)^2-(\p_\t \psi_1)^2 ] \right) +|V_d|^2(1-e^{2\psi_2}-2\psi_2). \nonumber
\end{split}\end{equation*}

We define
\begin{equation*}
\mathcal{A}_1(\psi):= 2(\p_\l \psi_1)(\p_\l \psi_2)+2(\p_\t \psi_1)(\p_\t \psi_2)\left(\e^2+\frac{1}{\l^2} \right),
\end{equation*}
\begin{equation*}
\mathcal{B}_1(\psi):= \e^2\dd \left( \sin \t[\p_\l \psi_1 \p_\t \psi_2+\p_\t \psi_1 \p_\l \psi_2 ]+\frac{2\cos \t}{\l}\p_\t \psi_1 \p_\t \psi_2 \right),
\end{equation*}
$$
\mathcal{C}_1(\psi):=\e^2\dd^2\left( 2\sin^2\t \p_\l \psi_1 \p_\l\psi_2+\frac{4\sin \t \cos \t}{\l}[\p_\l \psi_1\p_\t \psi_2+\p_\t \psi_1\p_\l \psi_2]+\frac{2\cos^2\t}{\l^2}\p_\t\psi_1 \p_\t \psi_2\right).
$$
We have $\left(\mathcal{N}(\psi)\right)_1= \mathcal{A}_1(\psi)+\mathcal{B}_1(\psi)+\mathcal{C}_1(\psi)$. Besides we can see that
\begin{equation*}
\frac{\left|\mathcal{B}_1(\psi)\right|}{\rho_1^{-1}+\rho_2^{-1}} \leq  \frac{C\e}{\sqrt{|\log \e|}}\left( \frac{|\nabla\psi_1|}{\rho_1^{-1}+\rho_2^{-1}} \times \frac{|\nabla \psi_2|}{\rho_1^{-1}+\rho_2^{-1}} \right) \leq  \frac{C \e}{\sqrt{|\log \e}|} \|\psi\|_*^2
\end{equation*}
 for \(3<\rho_1<R_\e\). Now by using the argument that a product of two $\pi$-periodic functions is $\pi$-periodic and the products of one $\pi$-periodic function and one $2\pi$-periodic function is $2\pi$-periodic and \eqref{eq:key_decomposition} we find that
\begin{equation}
\nonumber
\frac{\left|[\mathcal{A}_1(\psi)+\mathcal{C}_1(\psi)]^o\right|}{\rho_1^{-1}+\rho_2^{-1}}\leq C \left(\|\psi\|_{*}|\psi^{o}|_{\sharp}+|\psi^{o}|_{\sharp}^2\right).
\end{equation}
Thus we obtained that
\begin{equation*}
\frac{\left|\left(\mathcal{N}(\psi)\right)_1^o \right|}{\rho_1^{-1}+\rho_2^{-1}}\leq C\left(\|\psi\|_{*}|\psi^{o}|_{\sharp}+|\psi^{o}|_{\sharp}^2+\frac{C \e}{\sqrt{|\log \e}|} \|\psi\|_*^2 \right)
\end{equation*}
for \(3<\rho_1<R_\e\). 
We also have that
\begin{equation*}
1-e^{2\psi_2}-2\psi=\left( 1-e^{2\psi_2^e}-2\psi_2^e \right)+\left(1-e^{2\psi_2^o}-2\psi_2^o \right)e^{2\psi_2^e}+2\psi_2^o\left( e^{2\psi_2^e}-1\right).
\end{equation*}
We notice that \(1-e^{2\psi_2^e}-2\psi_2^e\) is a \(\pi\)-periodic function. Thus we find that
\begin{equation*}
\left|(1-e^{2\psi_2}-2\psi_2)^o \right|\leq C\left( |\psi_2^o||\psi^e|+|\psi^o|^2 \right).
\end{equation*}
By using again that a product of two $\pi$-periodic functions is $\pi$-periodic and the products of one $\pi$-periodic function and one $2\pi$-periodic function is $2\pi$-periodic and \eqref{eq:key_decomposition} we can obtain that
\begin{equation*}
\left|\left(\mathcal{N}(\psi)\right)_2^o\right| \leq C\left(\|\psi\|_{*}|\psi^{o}|_{\sharp}+|\psi^{o}|_{\sharp}^2+\frac{C \e}{\sqrt{|\log \e}|} \|\psi\|_*^2 \right)
\end{equation*}
We proceed in the same way to estimate the other terms in \(\mathcal{N}(\psi)\) when \(\rho_1<3\) or \(\rho_2<3\) and we use repeatedly \eqref{eq:key_decomposition} to arrive at \eqref{eq:estimate_Npsi_sharp}.

We now show that $G_\e$ is a contraction for $\e$ small enough. Indeed, if $\|\psi^j\|_{*}\leq \frac{C}{|\log \e|}$ for $j=1,2$ then
\begin{equation*}
\|\mathcal{N}(\psi^1)-\mathcal{N}(\psi^2)\|_{**}\leq \frac{C}{|\log \e|} \|\psi^1-\psi^2 \|_{*}.
\end{equation*}
 This is mainly due to the fact that $N(\psi)$ is quadratic and cubic in $\psi$, and in the first and second derivatives of $\psi$. Then we can use $a^2-b^2=(a-b)(a+b)$ and $a^3-b^3=(a-b)(a^2+ab+b^2)$. We finally apply the Banach fixed point theorem and we find the desired solution.
\end{proof}

\section{Solving the reduced problem}
The solution $\psi_\e$ of \eqref{eq:nonlinear} previously found depends continuously on $\d:=\sqrt{|\log\e|}d$. We want to find $\d$ such that the Lyapunov-Schmidt coefficient in \eqref{eq:nonlinear} satisfies $c=c(\d)=0$. We let
$$\varphi_\e:= \eta iV_d \psi_{\e}+(1-\eta)V_d e^{i\psi_{\e}}\qquad\mbox{ and }\qquad\phi_\e:=iV_d\psi_\e,$$
where $\eta$ was defined in \eqref{eta1}.
By symmetry we work only in $\R^+\times \R$. From the previous section we have found \(\psi_\e\) such that
\begin{equation*}\begin{split}
 iW(z)\left[\mathcal{L}^\e(\psi_\e)+R+\mathcal{N}(\psi_\e) \right](z+\dd) = c\chi W_{x_1}.
\end{split}\end{equation*}
For \(R_\e\) defined in \eqref{Reps} we set
\begin{equation*}
c_*:= \RE \int_{B(0,R_\e)} \chi |W_{x_1}|^2=\RE \int_{B(0,4)}\chi |W_{x_1}|^2,
\end{equation*}
and we remark that this quantity is of order $1$.
We find that
\begin{multline*}
c c_*=\RE \int_{B(0,R_\e)}iW(z) R(z+\dd) \overline{W}_{x_1}(z) + \RE \int_{B(0,R_\e)}iW(z)\L^\e(\psi_\e)(z+\dd) \overline{W}_{x_1}(z) \\ +\RE \int_{B(0,R_\e)} iW \mathcal{N}(\psi_\e)(z+\dd)\overline{W}_{x_1}.
\end{multline*}
We recall that \(iW(z)\mathcal{L}^\e(\psi)(z+\dd)=L^\e_j(\phi_j)\) for \(j=1\) and \(L_j\) defined in \eqref{defL_j}. Integrating by parts we find
\begin{equation*}
\RE \int_{B(0,R_\e)} L_j^\e(\phi_j)  \overline{W}_{x_1} =\RE \int_{B(0,R_\e)}\overline{\phi_{j}}(L_j^\e-L^0)(W_{x_1})+\RE \Bigl\{ \int_{\p B(0,R_\e)} \left(\frac{\p \phi_j}{\p \nu}\overline{W}_{x_1}-\phi_j\frac{\p \overline{W}_{x_1}}{\p\nu}\right) \Bigr\} .
\end{equation*}
Proceeding like in \eqref{eq:forfutureref} we conclude
\begin{equation*}
\left|\RE \int_{B(0,R_\e)} L_j^\e(\phi_j)\overline{W}_{x_1} \right| \leq C \e \sqrt{|\log \e|}\|\psi\|_* \leq \frac{C \e}{\sqrt{|\log \e|}}. \nonumber
\end{equation*}

Now we estimate the inner product of \(W_{x_1}\) and \(iW(z) \mathcal{N}(\psi)(z+\dd)\). We use the orthogonality of the Fourier modes to write
\begin{eqnarray*}
\RE \int_{B(0,R_\e)} iW(z) \mathcal{N}(\psi)(z+\d) \overline{W}_{x_1} & =& \RE \int_{B(0,R_\e)}iW \overline{W}_{x_1} \left(\mathcal{N}(\psi) \right)^o \nonumber \\
&=& \RE \int_{B(0,R_\e)} iw \left( w' \cos \theta-\frac{i w}{\rho}\sin \theta\right)\left[ \mathcal{N}(\psi)^o_1+i\mathcal{N}_2^o(\psi) \right] \nonumber \\
&=-& \int_{B(0,R_\e)} \left( ww' \cos \theta \mathcal{N}(\psi)^o_2-\frac{w^2}{r}\mathcal{N}(\psi)^o_1 \sin \theta \right).
\end{eqnarray*}
We use that
\begin{align*}
|\left(\mathcal{N}(\psi)\right)^o_2| &\leq |\left(\mathcal{N}(\psi)\right)^o_2|_{\sharp \sharp} \leq C \| \psi^e\|_{*}|\psi^o|_{\sharp}+|\psi^o|_\sharp^2 \leq C\e |\log \e|^{-1/2} \\
|\left(\mathcal{N}(\psi)\right)^o_1| & \leq C \left( \frac{|\psi_2^o|_\sharp \|\psi_1^e\|_*}{1+\rho^2} +\frac{|\psi_1^o|_\sharp \|\psi_2\|_*}{1+\rho^{2-\sigma}}+\frac{|\psi_1^o|_\sharp |\psi_2^o|_\sharp}{1+\rho^{2-\sigma}} \right)\leq C \frac{\e |\log\e|^{-1/2}}{1+\rho^{2-\sigma}}
\end{align*}
to obtain
\begin{equation*}
\left|\RE \int_{B(0,R_\e)} iW(z) \mathcal{N}(\psi)(z+\dd) \overline{W}_{x_1} \right| \leq C \frac{\e}{\sqrt{|\log \e|}}.
\end{equation*}

Now we claim 
\begin{equation*}
\RE \int_{B(0,R_\e)} iW(z) R(z+\dd)  \overline{W}_{x_1}=-\e \sqrt{|\log \e|}\left( \frac{a_0}{\d}-a_1\d\right)+o_\e(\e \sqrt{|\log \e|})
\end{equation*}
We set
\begin{eqnarray*}
B_0:= \RE \int_{B(0,R_\e)} iW(z) R(z+\dd)^0\overline{W}_{x_1}, \qquad B_1:= \RE \int_{B(0,R_\e)} iW(z) R(z+\dd)^1 \overline{W}_{x_1},
\end{eqnarray*}
where we recall that $S_0(V_d)=iV_d R^0, \ \ \ S_1(V_d)=iV_dR^1$ and $S_0,S_1$ are given by \eqref{def:S_0S_1}. 

From Lemma \ref{lem:propertiesofrho} and Lemma \ref{lem:ortho_2} we find that
\begin{equation*}\begin{split}
B_1= &\,\frac{\d \e}{\sqrt{|\log \e|}}\RE \int_{\{\l_1<R_\e \}}|W_{x_1}|^2 +O_\e\left(\frac{\e}{{\sqrt{|\log \e|}}}\right)=\, \d \e \sqrt{|\log \e|} a_1+o_\e(\e \sqrt{| \log \e|} )\nonumber
\end{split}
\end{equation*}
where we set
\begin{equation*}
a_1:=\frac{1}{|\log \e|}\int_0^{2\pi}\int_0^{\tilde{R}_\e} \frac{\r_1^2\sin^2\theta_1}{\l_1} \dif \l_1\dif \theta_1,
\end{equation*}
with \(\tilde{R}_\e\) which is of order \(\e^{-1}|\log \e|^{-1/2}\) and which does not depend on \(\d\). From the fact that $\lim_{\l\rightarrow +\infty}\r(\l)=1$ we can see that $0<c<a_1<C$ for some constants $c,C>0$, and \(a_1\) is independent of \(\d\).

On the other hand, by \eqref{S0Vd} we have
\begin{equation*}\begin{split}
B_0=&\, \RE \int_{\{\l_1<R_\e\}} 2\frac{(W^a_{x_1}W^b_{x_1}+W^a_{x_2}W^b_{x_2})}{W^b}\overline{W}^a_{x_1} \\
&\,+\RE \int_{\{\l_1<R_\e \}}(1-|W^aW^b|^2+|W^a|^2-1+|W^b|^2-1) W^a\overline{W}^a_{x_1}.
\end{split}\end{equation*}
The second integral is equal to
\begin{equation*}\begin{split}
\RE\int_{\{\l_1<R_\e \}}(1-(\r_1\r_2)^2&+\r_1^2-1+\r_2^2-1)\left(\r_1'\cos \t_1+\frac{i\r_1}{\l_1}\sin \t_1\right)\r_1 = O_\e(\e^2 |\log \e|),\nonumber
\end{split}\end{equation*}
where we used that $(1-(\r_1\r_2)^2+\r_1^2-1+\r_2^2-1)=O(\e^2|\log \e|)$ and $\r'(\l)=1/\l^3+O_\l(1/\l^4)$.
We can also see that
\begin{equation*}\begin{split}
\RE \int_{\{\l_1<R_\e \}} &\frac{W_{x_1}^aW_{x_1}^b}{ W^b}\overline{W}^a_{x_1}\\
=&\,\RE \int_{\{\l_1<R_\e \}}\left(\r_1'\cos \t_1+i\frac{\r_1}{\l_1}\sin \t_1\right) \Bigl[\r_1' \frac{\r_2'}{\r_2}\cos \t_1\cos \t_2-\frac{\r_1 }{\l_1 \l_2}\sin \t_1 \sin \t_2 \nonumber \\
&  \phantom{aaaaa}-i\left( \frac{\r_1'}{\l_2}\cos \t_1 \sin \t_2+\frac{\r_2' \r_1}{\r_2\l_1}\cos \t_2 \sin \t_1 \right) \Bigr] \nonumber \\
=&\, -\int_{\{\l_1<R_\e \}}  \r_1\r_1' \cos \t_1 \sin \t_1 \sin \t_2\frac{\mathrm{d} \l_1}{\l_2}\mathrm{d}\t_1  \nonumber \\
& \phantom{aaaaa} +\int_{\{\l_1<R_\e\}} \r_1'\r_1\cos\t_1 \sin\t_1\sin \t_2 \frac{\mathrm{d} \l_1}{\l_2}\mathrm{d}\t_1+O(\e^2|\log \e|).
\end{split}\end{equation*}
In the previous equality we used
$ \r_2' \leq C \e^3|\log \e|^{3/2}. $
Hence we get
\begin{equation*}
\RE \int_{\{\l_1<R_\e \}} \frac{W_{x_1}^aW_{x_1}^b}{W^b} \overline{W}^a_{x_1}=O_\e(\e^2|\log \e|).
\end{equation*}
Finally we have
\begin{equation*}\begin{split}
\RE \int_{\{\l_1<R_\e \}} & \frac{W_{x_2}^aW_{x_2}^b}{W^b} \overline{W}^a_{x_1}\\
=&\,\RE \int_{\{\l_1<R_\e\}} \left(\r_1'\cos \t_1+i\frac{\r_1}{\l_1}\sin \t_1\right)\Bigl[\r_1' \frac{\r_2'}{\r_2}\sin \t_1 \sin \t_2-\frac{\r_1 }{\l_1\l_2}\cos \t_1 \cos \t_2 \nonumber \\
& \,\phantom{aaaa} +i\left(\frac{\r_1' }{\l_2}\sin \t_1 \cos \t_2+\frac{\r_2' \r_1}{\r_2\l_1}\cos \t_1 \sin \t_1 \right) \Bigr] \nonumber \\
=&\, -\int_{\{\l_1<R_\e\}}  \frac{\r_1 \r_1'}{\l_1 \l_2}\cos^2\t_1 \cos \t_2 \l_1  \dif\l_1\dif \t_1 \nonumber \\
&\,\phantom{aaaa}- \int_{\{\l_1<\dd\}} \frac{\r_1 \r_1'}{\l_1 \l_2}\sin^2\t_1 \cos \t_2 \l_1 \mathrm{d}\l_1 \mathrm{d}\t_1+O(\e^2|\log \e|) \nonumber \\
=&\, \int_{\{\l_1<R_\e\}} \frac{\r_1 \r_1'}{ \l_2} \cos \t_2 \mathrm{d}\l_1 \mathrm{d} \theta_1+O(\e^2|\log \e|). \nonumber
\end{split}\end{equation*}
Using the properties of $\r_1,\r_1'$ and that in this region $\cos \t_2 >0$ and $0<c<\frac{\l_2 \e \sqrt{|\log \e|}}{\d}<C$ for some constants $c,C>0$ and \(\frac{\l_2 \e \sqrt{|\log \e|}}{\d}\) is independent of \(\d\) in the region \(0<\rho_1<R_\e\), we find
\begin{equation*}
\RE \int_{\{\l_1<R_\e \}} \frac{W_{x_2}^aW_{x_2}^b}{W^b} \overline{W}^a_{x_1}=-\frac{a_0}{\d} \e \sqrt{|\log \e|}+o_\e(\e\sqrt{|\log \e| }),
\end{equation*}
with $c<a_0<C$ for some constants $c,C>0$ and independent of \(\d\).

Therefore, we conclude that
\begin{equation*}
 c c_*=\e \sqrt{|\log \e|}\left( \frac{a_0}{\d}-a_1\d\right)+o_\e(\e \sqrt{|\log \e|}).
\end{equation*}
Let us point out that in this expression $o_\e(\e \sqrt{|\log \e|})$ is a continuous function of the parameter $\d$. By applying the intermediate value theorem we can find $\d_0$ near $\sqrt{\frac{a_0}{a_1}}$ such that $c=c(\d_0)=0$.
For such $\d_0$ we obtain that $V_d+\varphi_\e$ is a solution of \eqref{eq:GLequation}.   To conclude the proof of Theorem \ref{th:main1}, thanks to the helical symmetry, it suffices to show that the solutions of the 2-dimensional problem we found satisfy \(\lim_{|z|\rightarrow +\infty} |V_\e(z)|=1\). But this is because far away from the vortices our solution takes the form \( V_\e(z)=W(z-\dd)W(z+\dd)e^{i{\psi_\e}}\). Thus \(|V_\e|=|W(z-\dd)W(z+\dd)|e^{-\IM \psi_\e} \). Thanks to the decay estimates obtained on \(\psi_\e\) we have that \(|\IM \psi |\leq \frac{C}{|\log \e|}\left(\frac{1}{1+|z-\dd|^{2-\sigma}}+\frac{1}{1+|z+\dd|^{2-\sigma}} \right) \). This proves that \(\lim_{|z|\rightarrow +\infty} |V_\e(z)|=1\) and thus that the solution of the 3-dimensional problem satisfies \eqref{eq:prop_Gibbons}.

\section*{Acknowledgments}

The third author is supported by the European Union's Horizon $2020$ research and innovation programme under the Marie Sklodowska-Curie grant agreement N.$754446$ and UGR Research and Knowledge Transfer - Found Athenea3i. R.R is supported by the F.R.S.–FNRS under the Mandat d'Impulsion
scientifique F.4523.17,``Topological singularities of Sobolev maps". Part of this work was realized during a visit of the fourth author at the Universidad de Chile and Pontificia Universidad Cat\'olica de Chile supported by the project REC05 Wallonia-Brussels/Chile.

\section*{Appendix}
\subsection{The standard vortex and its linearized operator}
As stated in the introduction, the building block used to construct our solutions to equation \eqref{GL2-dimensionalrescaled} is the  standard vortex of degree one, $W$, in $\R^2$. It satisfies
$$\Delta W+(1-|W|^2)W=0\qquad \mbox{ in }\R^2,$$
and can be written as
$$W(x_1,x_2)= w(r)e^{i\v}\mbox{ where }x_1=r \cos \v,\; x_2=r \sin \v.$$
Here $w$ is the unique solution of \eqref{eq:equation_modulus_w}.
In this section we collect useful properties of $\r$ and of the linearized Ginzburg-Landau operator around $W$.
\begin{lemma}\label{lem:propertiesofrho}
The following properties hold,
\begin{itemize}
\item[1)] $\r(0)=0$, $\r'(0)>0$, $0<\r(r)<1$ and $\r'(r)>0$ for all $r>0$,
\item[2)] $\r(r)=1-\frac{1}{2r^2}+O(\frac{1}{r^4})$ for large $r$,
\item[3)] $\r(r)=\alpha r-\frac{\alpha r^3}{8}+O(r^5)$ for $r$ close to $0$ for some $\alpha>0$,
\item[4)] if we define $T(r)=\r'(r)-\frac{\r}{r}$ then $T(0)=0$ and $T(r)<0$ in $(0,+\infty)$,
\item[5)] $\r'(r)=\frac{1}{r^3}+O(\frac{1}{r^4})$, $\r''(r)=O(\frac{1}{r^4})$.
\end{itemize}
\end{lemma}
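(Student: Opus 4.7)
The plan is to establish each of the five items by a combination of classical shooting/phase-plane arguments for the profile equation \eqref{eq:equation_modulus_w} and a few exact ODE manipulations. Items (1) and the existence of $\alpha>0$ in (3) are part of the Hervé–Hervé / Chen–Elliott–Qi theory of the degree-one vortex: the shooting parameter $\alpha=\rho'(0)$ selects the unique orbit satisfying $\rho(+\infty)=1$, the bound $0<\rho<1$ is preserved along the orbit by a maximum principle on the equation for $1-\rho^2$, and $\rho'>0$ on $(0,\infty)$ can be obtained either from the same phase-plane analysis or as a consequence of (4). I would simply cite those references for (1).

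For (3), since the shooting is regular at $r=0$ the solution is analytic there with $\rho(r)=\alpha r+a_3 r^3+a_5r^5+\cdots$; plugging this series into \eqref{eq:equation_modulus_w} and collecting the coefficient of $r$ (using $\rho''+\rho'/r-\rho/r^2=8 a_3 r+O(r^3)$ and $(1-\rho^2)\rho=\alpha r+O(r^3)$) yields $8a_3+\alpha=0$, i.e.\ $a_3=-\alpha/8$.

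Item (4) admits a clean one-line proof, and is the step I would highlight. Set $T(r)=\rho'(r)-\rho(r)/r$; differentiating and substituting $\rho''=-\rho'/r+\rho/r^2-(1-\rho^2)\rho$ from \eqref{eq:equation_modulus_w} gives
\begin{equation*}
T'(r)+\frac{2}{r}T(r)=-(1-\rho^2(r))\rho(r).
\end{equation*}
Multiplying by $r^2$ this is $(r^2T)'=-r^2(1-\rho^2)\rho$, and integrating from $0$ with $T(0)=0$ (a consequence of (3)) we obtain
\begin{equation*}
r^2T(r)=-\int_0^r s^2\bigl(1-\rho^2(s)\bigr)\rho(s)\,\mathrm{d}s.
\end{equation*}
By (1) the integrand is strictly positive, so $T(r)<0$ for all $r>0$. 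As a by-product, this also gives $\rho'(r)<\rho(r)/r<1/r$, which will feed the asymptotic analysis below.

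The main obstacle is (2), from which (5) then follows by differentiation; this is the classical ``$1-\rho\sim 1/(2r^2)$'' result. I would substitute $\rho(r)=1-u(r)/r^2$ into \eqref{eq:equation_modulus_w} and obtain, after multiplying by $-r^2$,
\begin{equation*}
u''-\frac{3u'}{r}+\frac{3u}{r^2}+1-2u+\frac{3u^2}{r^2}+O\!\left(\frac{1}{r^4}\right)=0,
\end{equation*}
whose formal leading balance as $r\to\infty$ is $1-2u=0$, giving $u\to 1/2$. Rigorous justification proceeds by a super/sub-solution argument: using (4) to control monotonicity and using $1\pm\varepsilon\cdot r^{-2}$ as barriers for $\rho$ on $[r_0,\infty)$ (with $r_0$ large) one shows $u=\tfrac12+O(1/r^2)$, which is exactly (2). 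Differentiating this expansion once and twice using standard ODE bootstrap (the equation provides an algebraic expression for $\rho''$ in terms of $\rho,\rho'$ and for higher derivatives by further differentiation) yields (5). These are the arguments carried out in \cite{HerveHerve1994,ChenElliottQi1994}, and I would either reproduce the barrier computation or quote those works.
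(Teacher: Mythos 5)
The paper does not actually prove Lemma~\ref{lem:propertiesofrho}: immediately after the statement it says only ``For the proof of this lemma we refer to \cite{HerveHerve1994, ChenElliottQi1994}.'' So there is no paper proof to compare against in detail. Your proposal is substantively correct and provides genuinely more than the paper does: the power-series computation for (3) giving $8a_3+\alpha=0$, the integrating-factor identity $(r^2T)'=-r^2(1-\rho^2)\rho$ for (4), and the substitution $\rho=1-u/r^2$ with barriers for (2)--(5) are all the right arguments, and they are indeed the ones developed in the references the paper cites.

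One small overstatement worth correcting: in your discussion of item (1) you say that $\rho'>0$ on $(0,\infty)$ ``can be obtained \ldots as a consequence of (4).'' This is not right as stated: (4) gives $T(r)=\rho'-\rho/r<0$, i.e.\ the \emph{upper} bound $\rho'<\rho/r$, which says nothing about the sign of $\rho'$. Monotonicity of $\rho$ is a separate fact that comes from the shooting/phase-plane analysis (or from a maximum-principle argument applied to the equation directly), and you should rely on the cited references for it rather than on (4). The rest of the proposal stands.
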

For the proof of this lemma we refer to \cite{HerveHerve1994, ChenElliottQi1994}.

An object of special importance to construct our solution is the linearized Ginzburg-Landau operator around $W$, defined by
\begin{equation}\nonumber
L^0(\phi):= \Delta \phi+(1-|W|^2)\phi-2\RE(\overline{W}\phi)W.
\end{equation}
This operator does have a kernel, as the following result states.

\begin{lemma}\label{lem:ellipticestimatesL0}
Suppose that $\phi \in L^\infty(\R^2)$ satisfies $L^0(\phi)=0$ in $\R^2$ and the symmetry  \(\phi(\bar{z})=\bar{\phi}(z)\) .
Assume furthermore that when we write $\phi=iW\psi$ and $\psi=\psi_1+i\psi_2$ with $\psi_1,\psi_2\in \R$ we have
\begin{eqnarray*}
|\psi_1|+(1+|z|)|\nabla \psi_1| \leq  C, \qquad |\psi_2|+|\nabla \psi_2| \leq  \frac{C}{1+|z|} ,\quad |z|>1.
\end{eqnarray*}
Then
\begin{equation*}
\phi=c_1 W_{x_1}
\end{equation*}
for some real constant \(c_1\).
\end{lemma}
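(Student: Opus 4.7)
I will prove the claim via Fourier decomposition in the angular variable, reducing the problem to a countable family of ODE systems on $(0,\infty)$ and eliminating each mode one by one using the growth hypotheses on $\psi_1$ and $\psi_2$.

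First, writing $W = w(r)e^{i\theta}$ and $\phi = iW\psi$ with $\psi = \psi_1 + i \psi_2$, a direct computation (dividing $L^0(iW\psi)$ by $iW$) shows that $L^0(\phi)=0$ is equivalent to
\begin{align*}
\Delta \psi_1 + 2\frac{w'}{w}\partial_r \psi_1 - \frac{2}{r^2}\partial_\theta \psi_2 &= 0,\\
\Delta \psi_2 + 2\frac{w'}{w}\partial_r \psi_2 + \frac{2}{r^2}\partial_\theta \psi_1 - 2w^2 \psi_2 &= 0.
\end{align*}
The hypothesis $\phi(\bar z)=\overline{\phi(z)}$ translates (using $W(\bar z)=\overline{W(z)}$) into $\psi(\bar z)=-\overline{\psi(z)}$, i.e. $\psi_1(r,-\theta)=-\psi_1(r,\theta)$ and $\psi_2(r,-\theta)=\psi_2(r,\theta)$. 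Consequently I expand
$$\psi_1(r,\theta)=\sum_{n\ge 1} a_n(r)\sin(n\theta),\qquad \psi_2(r,\theta)=\sum_{n\ge 0} b_n(r)\cos(n\theta),$$
and the PDE decouples mode by mode into the ODE systems
$$ a_n'' +\Bigl(\tfrac{1}{r}+2\tfrac{w'}{w}\Bigr)a_n' - \tfrac{n^2}{r^2}a_n + \tfrac{2n}{r^2}b_n=0,\qquad b_n'' +\Bigl(\tfrac{1}{r}+2\tfrac{w'}{w}\Bigr)b_n' - \tfrac{n^2}{r^2}b_n + \tfrac{2n}{r^2}a_n - 2w^2 b_n =0.$$

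Next I analyse each mode. \emph{Mode $n=0$:} then $a_0\equiv 0$ and $b_0$ solves a single second-order ODE. From Lemma~\ref{lem:propertiesofrho}, near $r=0$ we have $w\sim \alpha r$, so $w'/w\sim 1/r$; a Frobenius analysis gives two behaviours $b_0\sim\mathrm{const.}$ and $b_0\sim r^{-2}$. At infinity, $w\to 1$ and the equation is asymptotic to $b_0''+b_0'/r-2b_0=0$, whose solutions behave like $e^{\pm\sqrt{2}r}/\sqrt{r}$. The hypothesis $|\psi_2|\le C/|z|$ at infinity kills the growing exponential, while $\phi\in L^\infty$ near the origin forces $|b_0|=O(1/r)$, ruling out the $r^{-2}$ solution. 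A standard shooting/Wronskian argument then yields $b_0\equiv 0$. \emph{Mode $n=1$:} the coupled system $(L_1)$ has the explicit bounded solution
$$a_1^\star(r)=-\tfrac{1}{r},\qquad b_1^\star(r)=-\tfrac{w'}{w},$$
which precisely reconstructs $\phi=W_{x_1}$ (using $W_{x_1}=e^{i\theta}(w'\cos\theta - i(w/r)\sin\theta)$). The other three linearly independent solutions of $(L_1)$ correspond respectively to $W_{x_2}$, $iW$, and to solutions that either blow up at the origin or grow at infinity. The first two are eliminated by the symmetry constraint, and the last by the stated decay hypotheses on $\psi_1,\psi_2$. \emph{Modes $n\ge 2$:} I multiply the first equation by $w^2 a_n$, the second by $w^2 b_n$, integrate over $(0,\infty)$ with the radial measure $r\,dr$, and add. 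The cross terms produce $\tfrac{2n}{r^2}(a_nb_n - a_n b_n)=0$ modulo the curvature term, while the drift $2w'/w$ is absorbed via $(w^2 f')' = w^2 f'' + (w^2)' f'$. The resulting identity takes the form
$$\int_0^\infty \Bigl[(a_n')^2+(b_n')^2 + \tfrac{n^2}{r^2}(a_n^2+b_n^2) + 2 w^2 b_n^2 - \tfrac{4n}{r^2} a_n b_n\Bigr] w^2\, r\, dr = \text{boundary terms}.$$
For $n\ge 2$, $n^2\ge 2n$, so the integrand dominates the positive quadratic form $((n-1)^2/r^2)(a_n^2+b_n^2)+2w^2b_n^2\ge 0$. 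The assumed growth rates together with $\phi\in L^\infty$ ensure that the boundary contributions at $0$ and $\infty$ vanish, forcing $a_n\equiv b_n\equiv 0$.

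Combining the three mode analyses gives $\phi = c_1 W_{x_1}$. \emph{The main obstacle} will be step three for the modes $n\ge 2$: choosing the correct integrating weight so that the drift $2w'/w$ is canceled exactly and the resulting quadratic form is manifestly positive, while simultaneously justifying that the growth bounds on $\psi_1,\psi_2$ translate into vanishing boundary terms at $r=0$ and $r=\infty$. Once this is set up, the contradiction is straightforward; the delicate point is the Frobenius matching at the origin, since $w'/w\sim 1/r$ makes the indicial roots of the ODE shift, and one must verify carefully that no admissible Frobenius branch survives the $L^\infty$ constraint on $\phi$.
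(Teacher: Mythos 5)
Your route is genuinely different from the paper's: the paper bootstraps the a priori decay of $\psi_1,\psi_2$ with barriers, deduces that $\phi$ has finite Ginzburg--Landau linearized energy, and then invokes the classification of bounded finite-energy kernel elements from \cite{delPinoFelmerKowalczyk2004} (Theorem 1), using the symmetry only at the very end to drop $W_{x_2}$. Your Fourier-mode-by-mode ODE analysis avoids that external input entirely and is in spirit much closer to what the paper does in the companion Lemma~\ref{lem:ellipticestimatesL0-b}; a self-contained proof along these lines would be a legitimate alternative. However, as written there are real gaps.

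The mode-$1$ step is not actually carried out. Your assertion that ``the other three linearly independent solutions of $(L_1)$ correspond respectively to $W_{x_2}$, $iW$, \ldots'' is incorrect: in the symmetry-reduced expansion ($\psi_1 = \sum a_n\sin n\theta$, $\psi_2 = \sum b_n\cos n\theta$) the element $W_{x_2}$ lives in the opposite parity class ($\psi_1\propto\cos\theta$, $\psi_2\propto\sin\theta$) and $iW$ corresponds to the $n=0$ constant mode of $\psi_1$, which is already excluded by the oddness of $\psi_1$; neither appears in the $n=1$ system at all. So you still have to classify the full four-dimensional solution space of the $n=1$ ODE system. That is the delicate core (it is precisely the case the energy quadratic form cannot handle, since $n^2-2n=-1<0$), and it requires something like the Wronskian/variation-of-parameters argument used in Lemma~\ref{lem:ellipticestimatesL0-b} plus a Frobenius analysis at $r=0$.

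For $n\ge 2$ the energy identity is the right idea but the boundary term does not vanish for the reason you give. The boundary contribution from the $a_n$-equation is $r w^2 a_n a_n'$, and the stated hypotheses only give $|a_n|\le C$ and $|a_n'|\le C/r$, so this term is $O(1)$ at $r\to\infty$, not $o(1)$. You would need either to first upgrade the decay of $\psi_1$ (which is exactly what the paper's barrier bootstrap accomplishes, yielding $\psi_1=O(|z|^{-\alpha})$), or to extract a sequence $R_k\to\infty$ along which $R_k\,a_n(R_k)a_n'(R_k)\to 0$ and pass to the limit along it --- possible, since the finiteness of $\int r(a_n')^2$ together with $|a_n|\le C$ forces $\liminf r|a_n a_n'|=0$, but this needs to be said. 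Also note the arithmetic: $n^2-2n=(n-1)^2-1$, not $(n-1)^2$; for $n=2$ the coefficient is exactly $0$, so the quadratic form is only degenerately nonnegative and you must use the remaining terms $r w^2 (a_2')^2$, $r w^2 (b_2')^2$ and $2r w^4 b_2^2$ to conclude $a_2\equiv b_2\equiv 0$. These points are all fixable, but as the proposal stands the argument for modes $n=1$ and $n=2$ is incomplete.
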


\begin{proof}
The equation $L_0(\phi)=0$ in $B(0,1)^c$ translates into
\begin{equation*}
\Delta \psi+2\frac{\nabla W}{W} \nabla \psi-2i|W|^2\IM \psi=0 \text{ in } B(0,1)^c.
\end{equation*}
This reads
\begin{align*}
0 & = \Delta \psi_1 +\frac{2\r'}{\r}\p_r \psi_1+\frac{2}{r^2}\p_\t \psi_2 \quad \text{ in } B(0,1)^c
\\
0 & = \Delta \psi_2+\frac{2\r'}{\r}\p_r \psi_2-\frac{2}{r^2}\p_\t \psi_1-2|W|^2\psi_2 \quad \text{ in } B(0,1)^c.
\end{align*}
We thus have, by using the decay assumption on $\psi_1,\psi_2$, that
\[ \left| \Delta \psi_2-2|W|^2\psi_2 \right|\leq \frac{C}{1+r^2} \text{ in } B(0,1)^c. 
\]
Since \(|W|^2\geq C>0\) in \(B(0,1)^c\) we can use a barrier argument and elliptic estimates to obtain that 
\begin{equation}\label{eq:est_psi_2_decay}
(1+|z|^2)\left(|\psi_2|+|\nabla \psi_2|\right) \leq C.
\end{equation}
We can then use the previous estimate to obtain that 
\[ |\Delta \psi_1|\leq \frac{C}{1+r^3} \text{ in } B(0,1)^c.
\]
We use that \(\psi_1(z = x_1 + ix_2)=0\) for $x_2=0$, a barrier argument in the half plane and elliptic estimates to obtain
\begin{equation}\label{eq:est_psi_1_decay}
|\psi_1|+(1+|z|)|\nabla \psi_1| \leq \frac{C}{(1+|z|)^\alpha}. 
\end{equation}
for any $\alpha \in (0,1)$.
From \eqref{eq:est_psi_2_decay} and \eqref{eq:est_psi_1_decay} we get
\begin{align}
\label{decay-phi}
|\phi (z)| + (1+|z|) |\nabla \phi|) \leq \frac{C}{(1+|z|)^\alpha}, \quad |z|>1.
\end{align}
From the fact $L^0(\phi)=0$ in $\R^2$ we know that
$$\RE \int_{B_R(0)}\overline{\phi}\Delta\phi+\int_{B_R(0)}(1-|W|^2)|{\phi}|^2-2\int_{B_R(0)}|\RE (\overline{W}{\phi})|^2=0,$$
for any $R>0$. Then integrating by parts we get
$$\int_{B_R(0)}|\nabla\phi|^2-\RE \int_{\p B_R(0)}\overline{\phi}  \p_\nu\phi-\int_{B_R(0)}(1-|W|^2)|{\phi}|^2+2\int_{B_R(0)}|\RE (\overline{W}{\phi})|^2=0.$$
Using \eqref{decay-phi}
we find $|\RE \left(\overline{\phi}  \p_\nu\phi\right)| \leq C/(1+|z|^{2\alpha+1})$. Thus
$$ \Bigl| \RE \int_{\p B_R(0)}\overline{\phi}  \p_\nu\phi \Bigr| \leq \frac{C}{R^{2\alpha}}.$$
Making $R\rightarrow\infty$ we conclude
$$\int_{\R^2}|\nabla\phi|^2-\int_{\R^2}(1-|W|^2)|{\phi}|^2+2\int_{\R^2}|\RE (\overline{W}{\phi})|^2=0.$$
Thanks to the decay estimates \eqref{decay-phi} we also have
\begin{equation*}
\int_{\R^2}\left[|\nabla \phi|^2+(1-|W|^2)|\phi|^2+|\RE(\bar{W}\phi)|^2 \right]<+\infty.
\end{equation*}
We can then apply Theorem 1 in \cite{delPinoFelmerKowalczyk2004} to obtain that there exists \(c_1,c_2\in \R\) such that \(\phi=c_1W_{x_1}+c_2W_{x_2}\). Using the symmetry assumption \(\phi(\bar{z})=\bar{\phi}(z)\) we can conclude that actually \(\phi=c_1W_{x_1}\) for some \(c_1\in \R\).
\end{proof}

\begin{lemma}
\label{lem:ellipticestimatesL0-b}
Suppose that $\phi \in L_{\text{loc}}^\infty(\R^2)$ satisfies $L^0(\phi)=0$ in $\R^2$ and the symmetry  \(\phi(\bar{z})=\overline{\phi(z)}\) .
Assume furthermore that when we write $\phi=iW\psi$ and $\psi=\psi_1+i\psi_2$ with $\psi_1,\psi_2\in \R$ we have
\begin{align*}
|\psi_1|+(1+|z|)|\nabla \psi_1| \leq  C(1+|z|)^\alpha, \qquad |\psi_2|+|\nabla \psi_2| \leq  \frac{C}{1+|z|} ,
\quad |z|>1,
\end{align*}
for some $\alpha<3$.
Then
\begin{equation*}
\phi=c_1 W_{x_1}
\end{equation*}
for some real constant \(c_1\).
\end{lemma}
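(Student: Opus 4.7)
The plan is to reduce Lemma~\ref{lem:ellipticestimatesL0-b} to Lemma~\ref{lem:ellipticestimatesL0} by showing that, despite the growth allowance $|\psi_1|\leq C(1+|z|)^\alpha$ with $\alpha<3$, the real part $\psi_1$ is in fact bounded. Once this is established, the hypotheses of Lemma~\ref{lem:ellipticestimatesL0} are met, and the conclusion $\phi=c_1W_{x_1}$ follows directly. First I would use the symmetry $\phi(\bar z)=\overline{\phi(z)}$, which translates to $\psi_1(r,-\theta)=-\psi_1(r,\theta)$ and $\psi_2(r,-\theta)=\psi_2(r,\theta)$, to Fourier expand
\[
\psi_1(r,\theta)=\sum_{n\geq 1}a_n(r)\sin(n\theta),\qquad \psi_2(r,\theta)=\sum_{n\geq 0}b_n(r)\cos(n\theta).
\]
The real system derived from $L^0(\phi)=0$ gives, for every $n$, a coupled ODE pair for $(a_n,b_n)$ in which $b_n$ carries the screened coefficient $-2|W|^2b_n$ and $a_n$ is driven by $b_n/r^2$ and vice versa.

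Next I would analyze the asymptotics at $r\to\infty$. Since $w\to 1$, the $b_n$ equation becomes $b_n''+b_n'/r-2b_n\sim 2na_n/r^2$, whose homogeneous solutions decay (or grow) exponentially, so the decay assumption $|b_n|\leq C/r$ forces $b_n$ to be essentially the particular solution driven by $a_n$. The $a_n$ equation is asymptotically $a_n''+a_n'/r-n^2a_n/r^2\sim 0$, giving $a_n\sim A_nr^n+B_nr^{-n}$. The growth bound $\alpha<3$ kills $A_n$ for $n\geq 3$; for $n=2$ a term $a_2\sim A_2r^2$ would force $b_2\to -2A_2/w^2\to -2A_2$, contradicting $|b_2|\leq C/r$, so $A_2=0$. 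Thus only the $n=1$ mode $a_1\sim A_1r+B_1/r$ (compatible with $b_1\sim -A_1/r$) remains to be excluded.

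The decisive step is ruling out $A_1\neq 0$, which cannot be done from local ODE analysis alone. Here I would use a Green-type identity: direct computation shows that the non-self-adjoint term $-2\RE(\overline{W}\phi)W$ drops out of $\RE[\overline{g}\,L^0(\phi)-\phi\,\overline{L^0(g)}]$, so for any $g,\phi$ with $L^0(g)=L^0(\phi)=0$,
\[
\RE\int_{\partial B_R}\bigl[\overline{g}\,\partial_r\phi-\phi\,\partial_r\overline{g}\bigr]\,ds=0\quad\text{for every }R>0.
\]
Choosing $g=W_{x_1}$, which has leading behavior $\overline{W_{x_1}}\sim i\sin\theta\,e^{-i\theta}/r$, and using the asymptotic expansion $\phi\sim iA_1x_2e^{i\theta}+(\text{bounded, decaying})$, a direct calculation shows that the integrand equals $-2A_1\sin^2\theta/r+o(1/r)$, so the boundary integral tends to $-2\pi A_1$ as $R\to\infty$. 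Since the identity forces it to vanish for every $R$, one obtains $A_1=0$.

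With $A_n=0$ for all $n\geq 1$, the Fourier analysis yields $|\psi_1|+(1+|z|)|\nabla\psi_1|\leq C$, and Lemma~\ref{lem:ellipticestimatesL0} applies to give $\phi=c_1W_{x_1}$. The main obstacle is the last step: rigorously controlling the $o(1)$ corrections in the boundary integral. This demands careful bookkeeping of the subleading terms in the expansions of both $\phi$ and $W_{x_1}$ (using Lemma~\ref{lem:propertiesofrho}) together with Schauder-type gradient bounds on $\phi$ at infinity, to ensure that contributions from $\phi-\phi_{\text{grow}}$ (which are $O(1)$ but decaying) and from the subleading parts of $W_{x_1}$ yield boundary integrals of order $O(R^{-1})$, so that only the limiting coefficient $-2\pi A_1$ survives.
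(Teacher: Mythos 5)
Your proposal is correct in outline and takes a genuinely different route from the paper on the decisive step. Both strategies decompose $\psi$ in Fourier modes about the origin and reduce the problem to showing that the linearly-growing part of the $k=1$ mode of $\psi_1$ vanishes, after which the hypotheses of Lemma~\ref{lem:ellipticestimatesL0} are met. To kill this growing coefficient (your $A_1$, the paper's $\alpha_1$), the paper stays at the ODE level: it identifies an explicit solution $\bar\psi$ of the $k=1$ system (the Fourier profile of $W_{x_1}$), forms the Wronskian $W(r)=\psi\cdot\bar\psi_r-\psi_r\cdot\bar\psi$, shows $W_r=-W/r$ because the $2\times2$ coefficient matrix is symmetric, hence $W(r)=c/r$, reads off $c=-2\alpha_1$ from the asymptotics at infinity, and concludes $c=0$ from boundedness of $W$ as $r\to0$ (smoothness of $\phi$ at the origin). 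Your Green/Pohozaev identity $\RE\int_{\partial B_R}\bigl[\overline{g}\,\partial_\nu\phi-\phi\,\partial_\nu\overline{g}\bigr]=0$ with $g=W_{x_1}$ is the PDE-level incarnation of the same conserved quantity (restricted to the $k=1$ mode, the integrand on a circle is exactly the Wronskian up to a constant), and your verification that the cubic term $-2\RE(\overline W\phi)W$ drops out of the bilinear identity is correct, as is the leading-order evaluation $-2\pi A_1$. The trade-off is exactly the one you flag: the ODE-Wronskian version is rigid — it never has to estimate error terms, because it works directly with a scalar conserved quantity of an ODE — whereas the PDE-Green version requires you to separately establish the asymptotic expansion $\psi_1=A_1 r\sin\theta+O(1)$ with gradient control, and then show the subleading parts of $\phi$ and of $W_{x_1}$ contribute $O(R^{-\delta})$ to the boundary integral; this is doable with the ODE machinery (variation of parameters plus Schauder), but at that point you have essentially reproduced the paper's argument. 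One further point worth noting in your favor: your treatment of the $n=2$ mode via the coupling (if $a_2\sim A_2r^2$, the forcing $\frac{4}{r^2}a_2$ in the $b_2$ equation would force $b_2\to 2A_2\neq 0$, contradicting the decay of $\psi_2$) is correct and actually needed when $\alpha\in(2,3)$, since the growth bound alone does not kill $r^2$; the paper's statement ``using barriers for $k\ge2$'' elides this point, and your argument supplies the missing reason the barrier comparison is justified for $k=2$.
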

\begin{proof}
Here we work with the change of variables 
$\phi = e^{i\theta} \psi$.
Then the equation $L^0(\phi)=0$ becomes
\begin{align}
\nonumber
0= \Delta \psi 
-\frac{1}{r^2}\psi
+ \frac{2i}{r^2}\partial_\theta\psi
+ (1-w^2) \psi 
-  2 i w^2 \IM(   \psi) .
\end{align}
Writing $\psi = \psi_1 + i \psi_2$ with $\psi_1,\psi_2\in\R$ we get the system
\begin{align}
\nonumber
\left\{
\begin{aligned}
0 &=
\Delta \psi_1  - \frac{1}{r^2} \psi_1 -\frac{2}{r^2} \partial_\theta \psi_2 + (1-w^2)\psi_1
\\
0 &=
\Delta \psi_2 -\frac{1}{r^2} \psi_2 + \frac{2}{r^2}\partial_\theta \psi_1 
+ (1-3 w^2)\psi_2  ,
\end{aligned}
\right.
\end{align}
which holds in $\R^2 \setminus \{0\}$ with the symmetry condition $\psi_1(\bar z ) = - \psi_1(z)$, $\psi_2(\bar z) = \psi_2(z)$.

We decompose in Fourier modes
\begin{align}
\nonumber
\psi_1 = \sum_{k=1}^\infty \psi_{1,k}^2(r) \sin(k\theta) ,
\quad 
\psi_2 = \sum_{k=0}^\infty  \psi_{2,k}^1(r) \cos(k\theta),
\end{align}
and obtain
\begin{align}
\label{sys1b1}
0
 &=
\partial_{rr} \psi_{1,k}^2
+\frac{1}{r}\partial_r \psi_{1,k}^2
-\frac{k^2+1}{r^2} \psi_{1,k}^2
+ 2\frac{k}{r^2} \psi_{2,k}^1
+ (1-w^2) \psi_{1,k}^2
\\
\label{sys1b2}
0
 &=
\partial_{rr} \psi_{2,k}^1
+\frac{1}{r}\partial_r \psi_{2,k}^1
-\frac{k^2+1}{r^2} \psi_{2,k}^1
+ 2\frac{k}{r^2} \psi_{1,k}^2
+ (1-3w^2) \psi_{2,k}^1 .
\end{align}

In particular equation \eqref{sys1b1} for $k=1$ can be written as 
\begin{align*}
\partial_{rr} \psi_{1,1}^2
+\frac{1}{r}\partial_r \psi_{1,1}^2
-\frac{1}{r^2} \psi_{1,1}^2
=
g_0
\end{align*}
where
\[
g_0(r) = -2\frac{1}{r^2} \psi_{2,1}^1
+ \Bigl(w^2-1+\frac{1}{r^2} \Bigr)\psi_{1,1}^2
=
O(r^{\alpha-4})
\]
as $r\to\infty$. The variation of parameters formula yields a function
\begin{align*}
\psi_0(r) = -\frac{1}{r}
\int_0^r \rho \int_\rho^\infty g_0(s) \,\dif s \dif \rho ,
\end{align*}
which satisfies
\begin{align}
\nonumber
\partial_{rr} \psi_0
+\frac{1}{r}\partial_r \psi_0
-\frac{1}{r^2} \psi_0
& =
g_0
\quad\text{for }r>1,
\\
\label{decay}
|\psi_0(r) | &\leq C r^{\alpha-2},
\quad 
|\partial_r\psi_0(r) | \leq C r^{\alpha-3},
\quad\text{for }r>1 .
\end{align}
Hence 
\begin{align}
\label{decompPsi1}
\psi_{1,1}^2(r) = \psi_0(r) +  \alpha_1 r + \alpha_2 r^{-1} , \quad r>1,
\end{align}
for some $\alpha_1,\alpha_2 \in \R$.
We claim that $\alpha_1 = 0$. To prove this, we note that for $k=1$ the system  \eqref{sys1b1}-\eqref{sys1b2} has the explicit solution 
\begin{align*}
\bar\psi = 
\left[
\begin{matrix}
\bar\psi_1
\\
\bar\psi_2
\end{matrix}
\right]
, \quad
\bar\psi_1 = \frac{w(r)}{r} , \quad 
\bar\psi_2 = -w'(r)
\end{align*}
Let $\psi = \left[
\begin{matrix}
\psi_{1,1}^2 
\\ 
\psi_{2,1}^1
\end{matrix}
\right]  $ and define the Wronskian
\begin{align*}
W(r) = \psi \cdot \bar\psi_r - \psi_r \cdot \bar\psi.
\end{align*}
We claim that 
\begin{align}
\label{wronsk}
W(r) = \frac{c}{r}
\end{align}
for some $c\in \R$.
To prove this note that the system \eqref{sys1b1}-\eqref{sys1b2} for $\psi$ can be written as 
\begin{align*}
0 = \psi_{rr} + \frac{1}{r}\psi_r + B \psi
\end{align*}
where $B$ is the $2\times 2$ matrix
\begin{align*}
B = 
\frac{2}{r^2}
\left[
\begin{matrix}
-1 & 1 \\ 1 & -1
\end{matrix}
\right]
+\frac{2}{r^2}
\left[
\begin{matrix}
1-w^2 & 0  \\ 0 & 1 - 3 w^2
\end{matrix}
\right].
\end{align*}
Then
\begin{align*}
W_r & = -\frac{1}{r}W + \psi^T ( B-B^T) \bar \psi =  -\frac{1}{r}W ,
\end{align*}
because the matrix $B $ is symmetric, and we get  \eqref{wronsk}.
Using the decomposition \eqref{decompPsi1}, the decay \eqref{decay} and the explicit form of $\bar\psi$ we see that
\begin{align*}
W(r) = -\frac{2\alpha_1}{r} .
\end{align*}

On the other hand, from the smoothness of $\phi$ near the origin we get that
 $\psi_{1,1}^2(r) $, $ \psi_{2,1}^1(r) $ and their derivatives remain bounded as $r\to 0$. Since the same is true for $\bar \psi$ we see that $W(r)$ is bounded as $r\to 0$, which implies that $\alpha_1=0$ as claimed.
This in turn implies that 
\begin{align*}
|\psi_{1,1}^2 | &\leq C r^{\alpha'},
\quad 
|\partial_r\psi_{1,1}^2 | \leq C r^{\alpha'-1},
\quad\text{for }r>1,
\end{align*}
where $\alpha'=\max(-1,\alpha-2)<1$.
Using barriers for the ODE \eqref{sys1b2} we get for $k=1$
\begin{align*}
|\psi_{1,1}^2 |\leq C ,\quad \text{for } r>1.
\end{align*}
and  for $k\geq 2$
\begin{align*}
|\psi_{1,k}^2(r)|\leq \frac{C}{k^2 r} ,\quad \text{for } r>1.
\end{align*}
Adding these inequalities we see that 
\begin{align*}
| \psi_1(z) | \leq C , \quad |z|>1,
\end{align*}
and then a standard scaling and elliptic estimates show that 
\begin{align*}
| \nabla \psi_1(z) | \leq \frac{C}{|z|} , \quad|z|>1.
\end{align*}
Now we can apply Lemma~\ref{lem:ellipticestimatesL0} and conclude that $\phi = c_1 W_{x_1}$ for some constant $c_1\in \R$.
\end{proof}

\subsection{Elliptic estimates used in the linear theory}\label{VIII}

In this subsection we prove elliptic estimates that we needed in Section \ref{IV} to develop the linear theory. More specifically we prove estimates of solutions to some model equations.

 We use the notation $z=(x_1,x_2)=re^{is}$ and throughout this section $\e>0$ is a parameter. We also use
$$\Delta = \p^2_{x_1 x_1}+\p^2_{x_2 x_2}=\p^2_{rr}+\frac{1}{r}\p_r+\frac{1}{r^2}\p^2_{ss}.$$
Furthermore in the equations the following term will appear: \begin{equation*}
\p^2_{ss} u=  x_2^2\p^2_{x_1x_1}u+x_1^2\p^2_{x_2 x_2}u-2x_1x_2 \p^2_{x_1x_2}u-x_1\p_{x_1}u-x_2\p_{x_2}u.
\end{equation*}
We start with recalling the statement and the proof of the comparison principle in the half-plane for the operator $\Delta +\e^2\p^2_{ss}$ with Dirichlet boundary condition.
\begin{lemma}\label{lem:comparison_principle}
Let $u:\R \times \R^*_+ \rightarrow \R$ be a bounded function which is in $C^2(\R \times \R^*_+)\cap C^0(\overline{\R \times \R^*_+})$ and which satisfies
\begin{equation}
\left\{
\begin{array}{rcll}
\Delta u+\e^2 \p_{ss}^2 u & \geq& 0 \text{ in } \R \times \R^*_+, \\
u&\leq & 0 \text{ on } \R \times \{0 \} .
\end{array}
\right.
\end{equation}
Then $u\leq 0$ in $\R \times \R^*_+$.
\end{lemma}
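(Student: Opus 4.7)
The plan is to prove this by a Phragm\'en--Lindel\"of argument based on an explicit barrier. Write $L:=\Delta+\e^2\p_{ss}^2$. Since $u$ is bounded and $u\le 0$ on $\R\times\{0\}$, it suffices to produce a nonnegative function $\varphi$ on $\overline{\R\times\R^*_+}$, smooth in $\R\times\R^*_+$, satisfying $L\varphi\le 0$ there, and which blows up uniformly on the semicircles $\{|x|=R,\,x_2\ge 0\}$ as $R\to\infty$. Given such a $\varphi$, one then carries out a standard comparison on the bounded truncation $\Omega_R:=B_R(0)\cap\{x_2>0\}$.

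For the construction of $\varphi$, I would fix any $\alpha\in(0,1)$ and, using polar coordinates $(r,s)$ with $x_1=r\cos s$, $x_2=r\sin s$, set
\[
\varphi(r,s)\ :=\ r^{\alpha}\bigl[\sin(\alpha s)+\sin(\alpha(\pi-s))\bigr].
\]
A direct calculation, using $\Delta=\p_{rr}+r^{-1}\p_r+r^{-2}\p_{ss}$, shows that each of $r^{\alpha}\sin(\alpha s)$ and $r^{\alpha}\sin(\alpha(\pi-s))$ is harmonic on $\{r>0\}$, so $\Delta\varphi=0$ there. Since the angular factor $g(s):=\sin(\alpha s)+\sin(\alpha(\pi-s))$ satisfies $g''=-\alpha^2 g$, we have $\p_{ss}^2\varphi=-\alpha^2\varphi$, and therefore
\[ L\varphi\ =\ -\alpha^2\e^2\,\varphi\ \le\ 0.\]
The sum-to-product identity $g(s)=2\sin(\alpha\pi/2)\cos(\alpha(s-\pi/2))$ shows that $g(s)\ge \sin(\alpha\pi)>0$ on $[0,\pi]$, so that $\varphi\ge 0$ on $\overline{\R\times\R^*_+}$ and
\[ \min_{\{|x|=R,\,x_2\ge 0\}}\varphi\ \ge\ \sin(\alpha\pi)\,R^{\alpha}\ \longrightarrow\ +\infty\quad\text{as } R\to\infty.\]

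With this barrier in hand, for each $\eta>0$ I would set $w:=u-\eta\varphi$. Then $Lw\ge 0$ in $\Omega_R$; on the base $[-R,R]\times\{0\}$ one has $w\le 0$ because $u\le 0$ there and $\eta\varphi\ge 0$; and, provided $R$ is chosen large enough that $\eta\sin(\alpha\pi)R^{\alpha}\ge\|u\|_{L^\infty}$, also $w\le 0$ on the upper semicircle. On the bounded domain $\Omega_R$ the operator $L$ is uniformly elliptic with bounded drift and no zeroth-order term, so the classical weak maximum principle gives $w\le 0$ in $\Omega_R$. Fixing $x$ with $x_2>0$, letting $R\to\infty$ yields $u(x)\le \eta\varphi(x)$, and then letting $\eta\to 0^+$ yields $u(x)\le 0$, which is the claim.

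The only point requiring real care is the choice of barrier. Indeed $x_2=r\sin s$ is itself a nonnegative supersolution (one checks $L x_2=-\e^2 x_2\le 0$), but $x_2$ vanishes at the corner points $(\pm R,0)$ of $\Omega_R$ and therefore cannot dominate the constant $\|u\|_{L^\infty}$ uniformly on the arc as $R\to\infty$. Taking $\alpha$ strictly smaller than $1$ and symmetrizing the angular factor under $s\mapsto\pi-s$ produces an angular profile uniformly bounded below on $[0,\pi]$ by $\sin(\alpha\pi)>0$, and the associated sublinear radial growth $r^{\alpha}$ is automatically compatible with $L\varphi\le 0$ thanks to the harmonicity of $r^{\alpha}\sin(\alpha s)$, so that the supersolution inequality collapses to the trivial $-\alpha^2\e^2\varphi\le 0$.
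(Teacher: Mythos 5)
Your proof is correct and takes essentially the same Phragm\'en--Lindel\"of approach as the paper: you build a positive supersolution of the form $r^{\beta}g(s)$ with $g$ bounded away from zero on $[0,\pi]$ and sublinear radial growth, exactly as the paper does with its first eigenfunction on $(-\pi/4,5\pi/4)$ (which for $\alpha=2/3$ coincides with your barrier up to a constant). The only cosmetic difference is that you finish by comparing $u-\eta\varphi$ on truncated half-balls and then sending $R\to\infty$, $\eta\to 0^+$, whereas the paper divides $u$ by the barrier and applies the maximum principle to the quotient.
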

\begin{proof}
We adapt the proof of Lemma 2.1 in \cite{berestycki1997monotonicity}.

Let us use polar coordinates $(r,s)\in (0,+\infty) \times (0,\pi)$, and let $\varphi>0$ be the first eigenfunction of $\p^2_{ss}$ in $(-\frac{\pi}{4},\frac{5\pi}{4})$ associated to the eigenvalue $\mu>0$, i.e.,
\begin{equation*}
\left\{
\begin{array}{rcll}
\p^2_{ss} \varphi+\mu \varphi =0 \text{ on } (-\frac{\pi}{4},\frac{5\pi}{4}), \\
\varphi(-\frac{\pi}{4})=\varphi(\frac{5\pi}{4})=0.
\end{array}
\right.
\end{equation*}
We define $\beta:=\sqrt{\mu}$ and we set $g(r,s):=r^\beta \varphi(s)$ in $(0,+\infty)\times (-\frac{\pi}{4},\frac{5\pi}{4})$ and hence
\begin{equation*}
\p^2_{rr}g+\frac{1}{r}\p_rg+\left(\frac{1}{r^2}+\e^2\right)\p^2_{ss}g=-\mu\e^2 g \leq 0 \text{ in } (0,+\infty)\times (-\frac{\pi}{4},\frac{5\pi}{4}).
\end{equation*}
Consider $\sigma:=u/g$ in $(0,+\infty) \times (0,\pi)$ (note that $g>0$ in this domain). Since $\Delta u+\e^2\p^2_{ss}u\geq 0$ we find:
\begin{equation*}
\Delta \sigma+\e^2\p^2_{ss}\sigma +\frac{2}{g}\left[\p_rg\p_r \sigma+\left(\frac{1}{r^2}+\e^2\right)\p_sg\p_g\sigma\right]+\frac{\Delta g+\e^2\p^2_{ss}g}{g}\sigma\geq 0.
\end{equation*}
We note that $\frac{\Delta g+\e^2\p^2_{ss}g}{g}\sigma\leq 0$ and since $u$ is bounded $\limsup_{r\rightarrow +\infty} \sigma =0$. We can thus apply the maximum principle to deduce that $\sigma \leq 0$ in $(0,+\infty)\times(0,\pi)$. Hence $u \leq 0$ as well in $(0,+\infty)\times(0,\pi)$.
\end{proof}
In the same spirit we have the following comparison principle for Neumann boundary condition.

\begin{lemma}\label{lem:comparison_principle_Neumann}
Let \(u:\R\times \R^*_+ \rightarrow \R\) be a bounded function which is in \(C^2(\R\times\R^*_+)\cap C^1(\overline{\R\times\R^*_+})\). Let \(c\geq 0 \). We assume that \(u\) satisfies
\begin{equation*}
\left\{ 
\begin{array}{rcll}
\Delta u+\e^2 \p^2_{ss}u-c u & \geq & 0 \text{ in } & \R \times \R^*_+, \\
\p_\nu u &\leq & 0 \text{ on } & \R\times \{0 \},
\end{array}
\right.
\end{equation*}
then \(u\leq 0\) in \(\R\times \R^*_+\). 
\end{lemma}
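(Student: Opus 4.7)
The plan is to mirror the barrier strategy of Lemma~\ref{lem:comparison_principle}, but to accommodate the Neumann condition the product barrier $r^\beta\varphi(s)$ used there, which is tailored to Dirichlet data, must be replaced by a purely radial one. The substantive case is $c>0$: this is what is actually invoked later in the paper, and the statement is in fact false at $c=0$ since constants $u\equiv K>0$ satisfy the hypotheses but violate the conclusion. As a barrier I take
\[
g(x)\;:=\;I_0\!\bigl(\sqrt{c}\,|x|\bigr),
\]
where $I_0$ is the modified Bessel function of the first kind of order zero, i.e.\ the unique smooth, strictly positive, radial solution on $\R^2$ of $\Delta g-cg=0$ with $g(0)=1$. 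Then $g\ge 1>0$ on $\R^2$, $g(x)\to+\infty$ as $|x|\to+\infty$, and radiality gives $\partial_s g\equiv 0$, hence $\partial^2_{ss}g\equiv 0$. Setting $L:=\Delta+\varepsilon^2\partial^2_{ss}-c$, we therefore obtain $Lg\equiv 0$ on $\R^2$ and $\partial_\nu g\equiv 0$ on $\R\times\{0\}$.

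Next, define $\sigma:=u/g$ on $\overline{\R\times\R^*_+}$. A direct Leibniz computation, using $\partial_s g=0$ to kill the $\varepsilon^2\partial_s g\,\partial_s\sigma$ term and $Lg=0$ to kill the zeroth-order term, yields
\[
Lu \;=\; g\Bigl[\,\Delta\sigma+\varepsilon^2\partial^2_{ss}\sigma+\tfrac{2\nabla g\cdot\nabla\sigma}{g}\,\Bigr],
\]
so that, dividing by $g>0$, the hypothesis $Lu\ge 0$ translates to
\[
\Delta\sigma+\varepsilon^2\partial^2_{ss}\sigma+\tfrac{2\nabla g\cdot\nabla\sigma}{g}\;\ge\;0 \quad\text{in }\R\times\R^*_+,
\]
an elliptic inequality with no zeroth-order term. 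The Neumann datum transfers cleanly, because $\partial_\nu g=0$ on $\R\times\{0\}$ gives $\partial_\nu\sigma=\partial_\nu u/g-u\,\partial_\nu g/g^2=\partial_\nu u/g\le 0$ there. Finally, since $u$ is bounded and $g(x)\to+\infty$, we get the decay $\sigma(x)\to 0$ as $|x|\to+\infty$.

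Close the argument by contradiction: suppose $\Sigma:=\sup\sigma>0$. The continuity of $\sigma$ on $\overline{\R\times\R^*_+}$ together with the decay at infinity force $\Sigma$ to be attained at some $x_0\in\overline{\R\times\R^*_+}$. If $x_0$ is interior, then the strong maximum principle—legitimate because $\Delta+\varepsilon^2\partial^2_{ss}$ is uniformly elliptic on bounded subsets (the principal symbol is a positive definite matrix with eigenvalues $1$ and $1+\varepsilon^2|x|^2$) and the first-order coefficient $2\nabla g/g$ is smooth and bounded on bounded sets—forces $\sigma\equiv\Sigma$ on the connected half-plane, contradicting $\sigma\to 0$. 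If instead $x_0\in\R\times\{0\}$, the Hopf boundary-point lemma yields $\partial_\nu\sigma(x_0)>0$, contradicting $\partial_\nu\sigma\le 0$. Hence $\Sigma\le 0$, i.e.\ $u\le 0$ throughout $\R\times\R^*_+$.

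The main obstacle is precisely the Neumann boundary condition: the angular-eigenfunction barrier of Lemma~\ref{lem:comparison_principle} cannot be adapted directly, because any function of the form $r^\beta\varphi(s)$ with $\varphi'(0)\le 0$, $\varphi'(\pi)\ge 0$ and the correct angular sign fails to be a supersolution near $r=0$. The radial choice $g=I_0(\sqrt{c}|\cdot|)$ is essentially forced, and the entire argument hinges on the fact that radiality makes $\partial^2_{ss}g$ vanish, so that $g$ satisfies $Lg=0$ via the scalar modified Bessel ODE and $\partial_\nu g=0$ on the boundary automatically; once this is in place, everything else is a Leibniz computation plus the strong maximum principle and Hopf's lemma applied to an equation in which the zeroth-order term has cancelled.
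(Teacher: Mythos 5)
Your proof is correct for $c>0$, and you have also spotted a real defect in the statement as written: with $c\geq 0$ allowed, the lemma is false, since any positive constant $u\equiv K$ satisfies the hypotheses at $c=0$ but not the conclusion; the hypothesis should read $c>0$. This is harmless for the paper, since Lemma~\ref{lem:comparison_principle_Neumann} is only invoked as a ``slight variant'' in $D_R$, either with a zeroth-order coefficient bounded below by a positive constant (the $\psi_2$ estimate, where the coefficient is $2|V_d|^2\geq c_0>0$), or in a situation where the symmetries of $\psi_1$ impose Dirichlet rather than Neumann data on the relevant part of $\partial D_R$, which rules out constants.

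Regarding the route: the paper gives no proof, saying only that the result follows ``in the same spirit'' as Lemma~\ref{lem:comparison_principle}, whose barrier is $g=r^\beta\varphi(s)$ with $\varphi$ a Dirichlet eigenfunction on a widened angular interval. As you observe, this construction does not transfer to the Neumann setting: ensuring $\partial_\nu g=0$ at $s=0,\pi$ would require a positive angular function with $\varphi'(0)=\varphi'(\pi)=0$ and a positive eigenvalue, which forces $\varphi$ constant and $\beta=0$, killing the growth of $g$ needed for the decay of $u/g$; and a non-eigenfunction $\varphi$ leaves the sign of $\Delta(r^\beta\varphi)$ uncontrolled. Your radial barrier $g=I_0(\sqrt{c}\,|x|)$ is the natural replacement: radiality annihilates both $\partial^2_{ss}g$ and $\partial_\nu g$ on $\{x_2=0\}$ identically, the modified Bessel ODE gives $\Delta g-cg=0$, and $I_0(t)\to\infty$ supplies the decay of $\sigma=u/g$ at infinity. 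With those three facts the Leibniz identity kills the zeroth-order term, the strong maximum principle applies (the principal part $I+\varepsilon^2 vv^T$, $v=(x_2,-x_1)$, has smallest eigenvalue $1$), and Hopf's boundary lemma closes the argument exactly as in the Dirichlet case. This is a genuinely different, more elementary barrier construction—one scalar radial ODE instead of an angular eigenvalue problem—and it makes transparent exactly where the hypothesis $c>0$ is indispensable.
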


For a function $f:\R^2 \rightarrow \R$ and $\nu \in \mathbb{N}^*,\alpha >0$ we introduce the norms:
$$\|f\|_{\nu,\alpha}:=\|(1+|z|^\nu)f\|_{L^\infty(\R^2)}+\sup_{z\in \R^2} |z|^{\nu+\alpha} [f]_{z,\alpha}$$
with
$$[f]_{z,\alpha}:=\sup_{|h|<1} \frac{|f(z+h)-f(z)|}{|h|^\alpha}.$$
Our first goal is to prove 
\begin{proposition}\label{prop:est_model_psi_1}
Let $f:\R^2\rightarrow \R$ be such that $f(\overline{z})=-f(z)$ and
$
\|f\|_{2,\alpha} <+\infty.
$
Then there exists a unique bounded solution of
\begin{equation}\label{eq:modelequation2}
\Delta u+\e^2\p^2_{ss} u=f \text{ in } \R^2
\end{equation}
which satisfies $u(\overline{z})=-u(z)$ and \begin{equation}\label{eq:fbounded}
|u(z)|\leq C \|f\|_{2,\alpha},\quad
|\nabla u(z)| \leq C\frac{\|f\|_{2,\alpha}}{1+|z|} \ \text{ for all } z \text{ in } \R^2,
\end{equation}
\begin{equation}\label{eq:decayonf2}
 |\e \p_su(z)|\leq C\frac{\|f\|_{2,\alpha}}{|z|} \ \text { for } |z|\geq \frac{C}{\e},\quad
\|D^2u \|_{2,\alpha}\leq C \|f\|_{2,\alpha}.
\end{equation}
\end{proposition}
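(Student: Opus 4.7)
The plan is to reduce everything to the upper half-plane $\{x_2 > 0\}$ via the oddness $f(\bar z) = -f(z)$, which makes $f$ vanish on $\{x_2 = 0\}$; a bounded odd solution must vanish on that line, and conversely any bounded solution in the half-plane with zero Dirichlet data extends uniquely by odd reflection to all of $\R^2$. The operator $\Delta + \e^2 \p_{ss}^2$ is elliptic (its Cartesian principal matrix has determinant $1 + \e^2|z|^2 > 0$), so I would first solve on bounded half-disks $B_R(0)\cap\{x_2>0\}$ with zero Dirichlet boundary data by standard Schauder theory, and then pass to the limit $R \to \infty$. For the uniform $L^\infty$-bound I would test against the angular barrier $B(z) := M\theta(\pi - \theta)$, where $\theta$ is the polar angle; a direct computation yields
\begin{equation*}
(\Delta + \e^2 \p_{ss}^2) B = -2M \Bigl( \frac{1}{r^2} + \e^2 \Bigr),
\end{equation*}
which dominates $|f(z)| \leq \|f\|_{2,\alpha}/(1+r^2)$ for $M \sim \|f\|_{2,\alpha}$ (split into $r \leq 1$ and $r \geq 1$). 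Lemma~\ref{lem:comparison_principle} gives $|u_R| \leq B \leq C\|f\|_{2,\alpha}$ uniformly in $R$, and a diagonal/compactness argument using interior Schauder estimates produces an entire $u$ satisfying \eqref{eq:fbounded}. Uniqueness follows from the same barrier applied to the difference of two bounded odd solutions.

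The gradient and weighted $C^{2,\alpha}$-controls are then obtained from interior Schauder in two regimes. On $|z| \leq 2$ one applies standard Schauder on a fixed compact set. On $2 \leq |z| \leq 1/\e$, the coefficients $\e^2 x_j x_k$ of the perturbation $\e^2 \p_{ss}^2$ are bounded by $\e^2|z|^2 \leq 1$, and rescaling to balls of radius $\rho := |z|/4$ gives a uniformly elliptic equation with coefficients of bounded $C^\alpha$ norm and rescaled right-hand side of size $\|f\|_{2,\alpha}\rho^{-2}$; the usual Schauder inequality yields $|\nabla u(z)| \leq C\|f\|_{2,\alpha}/(1+|z|)$ and the weighted bound $\|D^2 u\|_{2,\alpha} \leq C\|f\|_{2,\alpha}$ in this range.

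The main obstacle is the refined estimate $|\e \p_s u| \leq C\|f\|_{2,\alpha}/|z|$ for $|z| \geq C/\e$: a naive Schauder bound here only yields $|\p_s u| \leq C\|f\|_{2,\alpha}$, i.e., $|\e\p_s u| \leq C\e\|f\|_{2,\alpha}$, which is far too weak when $|z| \gg 1/\e$. To extract the extra decay I would pass to the rescaled variable $\tilde r := \e r$, set $\tilde u(\tilde r, s) := u(\tilde r/\e, s)$, and note that the equation becomes
\begin{equation*}
\p_{\tilde r\tilde r}^2 \tilde u + \tilde r^{-1}\p_{\tilde r}\tilde u + (\tilde r^{-2} + 1)\p_{ss}^2 \tilde u = \e^{-2} f(\tilde r/\e, s),
\end{equation*}
whose right-hand side is of order $\|f\|_{2,\alpha}/\tilde r^2$ for $\tilde r \geq 1$. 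A second barrier argument with the supersolution $\tilde B(\tilde r, \theta) := M\theta(\pi - \theta)/(1+\tilde r)$ (for which a direct calculation gives $(\p_{\tilde r\tilde r}^2 + \tilde r^{-1}\p_{\tilde r} + (\tilde r^{-2}+1)\p_{ss}^2)\tilde B \leq -cM/\tilde r$ for $\tilde r \geq 1$) produces the pointwise decay $|\tilde u(\tilde r, s)| \leq C\|f\|_{2,\alpha}/(1+\tilde r)$. Schauder in the rescaled picture on a $1/4$-ball around $(\tilde r_0, s_0)$ then delivers $|\p_s \tilde u(\tilde r_0, s_0)| \leq C\|f\|_{2,\alpha}/\tilde r_0$, which via $\p_s u = \p_s \tilde u$ and $\tilde r_0 = \e|z|$ is exactly $|\e\p_s u(z)| \leq C\|f\|_{2,\alpha}/|z|$.
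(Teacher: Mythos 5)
Your argument is correct and follows the same overall strategy as the paper's Lemmas~\ref{lem:ellipticestimates}, \ref{lem:uniqueness} and the proof of Proposition~\ref{prop:est_model_psi_1}: reduce to the upper half-plane via the odd symmetry, obtain the $L^\infty$ bound with the angular barrier $s(\pi-s)$ and the comparison principle of Lemma~\ref{lem:comparison_principle}, produce the solution by solving Dirichlet problems on half-disks and letting the radius go to infinity, and obtain the remaining estimates by Schauder plus scaling. Where you differ, you actually add value: the paper's proof of Lemma~\ref{lem:ellipticestimates} disposes of the far-field estimate $|\e\partial_s u|\leq C\|f\|_{2,\alpha}/|z|$ with the one-line remark ``Schauder estimates and a standard scaling argument,'' while you correctly observe that naive Schauder in the rescaled variable $\tilde r=\e r$ only gives $|\partial_s u|\lesssim\|f\|_{2,\alpha}$, which is insufficient when $|z|\gg 1/\e$. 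Your fix --- a second comparison argument in the rescaled region $\{\tilde r\geq 1\}$ with the supersolution $M\theta(\pi-\theta)/(1+\tilde r)$, yielding the intermediate decay $|u|\lesssim\|f\|_{2,\alpha}/(1+\e|z|)$ and then Schauder on unit balls in the $(\tilde r,s)$-picture --- is a clean way to make the scaling argument rigorous, and the supersolution computation checks out (for $\tilde r\geq 1$ one indeed gets $(\partial^2_{\tilde r\tilde r}+\tilde r^{-1}\partial_{\tilde r}+(\tilde r^{-2}+1)\partial_{ss}^2)\tilde B\leq -cM/(1+\tilde r)$ since $\pi^2/8<2$). You also obtain uniqueness directly from Lemma~\ref{lem:comparison_principle} applied to the difference, which is simpler than the paper's separate Liouville-type Lemma~\ref{lem:uniqueness}; both are fine. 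Two small points worth tightening: ``positive determinant'' is not by itself ellipticity --- say instead that $I+\e^2\binom{x_2^2\ -x_1x_2}{-x_1x_2\ \ x_1^2}$ has eigenvalues $1$ and $1+\e^2|z|^2$, hence is uniformly positive definite on compacts; and when invoking Schauder on a $1/4$-ball with a rescaled right-hand side, the $C^{\alpha}$-seminorm of $\tilde f=\e^{-2}f(\cdot/\e,\cdot)$ is not controlled by $\|f\|_{2,\alpha}$ at unit scale in $\tilde r$ (the increments $|h|<1$ in the definition live at the original scale), so it is cleaner to use interior $W^{2,p}\hookrightarrow C^{1,\alpha}$ estimates with the $L^\infty$ bound $|\tilde f|\lesssim\|f\|_{2,\alpha}/\tilde r_0^2$ rather than classical Schauder.
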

We first prove:

\begin{lemma}\label{lem:ellipticestimates}
 Let $f:\R^2\rightarrow \R$ be such that $f(\overline{z})=-f(z)$ and
$
\|f\|_{2,\alpha} <+\infty.
$
Let $u:\R^2\rightarrow \R$ be a bounded function such that $u(\bar{z})=-u(z)$ and 
\begin{equation}\label{eq:modelequation}
\Delta u+\e^2\p^2_{ss} u=f \text{ in } \R^2.
\end{equation}
Then there exists $C>0$ independent of $u,f,\e$ such that \eqref{eq:fbounded}, \eqref{eq:decayonf2} hold.
\end{lemma}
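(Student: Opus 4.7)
The plan is to prove the $L^\infty$ bound on $u$ via the half-plane comparison principle of Lemma~\ref{lem:comparison_principle} applied to an explicit bounded angular barrier, and then to upgrade this to the gradient and Hölder bounds in \eqref{eq:fbounded}--\eqref{eq:decayonf2} by applying interior Schauder theory after a rescaling whose scale depends on whether $|z|\lesssim 1$, $1\lesssim |z|\lesssim 1/\e$, or $|z|\gtrsim 1/\e$. The antisymmetry $u(\bar z)=-u(z)$ forces $u=0$ on $\{x_2=0\}$, so it suffices to work in the upper half-plane with zero Dirichlet data.

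Writing $z=re^{is}$ with $s\in[0,\pi]$ in the upper half-plane, I would use the barrier
$$
\mathcal{B}(z):=A\,s(\pi-s),\qquad A:=\tfrac12\|f\|_{2,\alpha},
$$
which is bounded by $\pi^2 A/4$ and vanishes on $\{x_2=0\}$. Since $\mathcal{B}$ depends only on $s$, a direct computation in polar coordinates gives
$$
(\Delta+\e^2\p^2_{ss})\mathcal{B}=\Bigl(\tfrac1{r^2}+\e^2\Bigr)\p^2_{ss}\mathcal{B}=-2A\Bigl(\tfrac1{r^2}+\e^2\Bigr).
$$
The hypothesis $|f(z)|\le\|f\|_{2,\alpha}/(1+r^2)$ together with the elementary estimate $\frac{r^2}{1+r^2}\le 1$ yields $|f|\le 2A(\tfrac1{r^2}+\e^2)$ throughout the half-plane; hence $(\Delta+\e^2\p^2_{ss})(u-\mathcal{B})\ge 0$ and $u-\mathcal{B}\le 0$ on the real axis. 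Applying Lemma~\ref{lem:comparison_principle} on the exterior of a shrinking disk around the origin (the point-singularity of $\mathcal{B}$ at $0$ is removable because both $u$ and $\mathcal{B}$ are bounded, an approximation by smooth barriers on $B_\delta$ handles it rigorously) gives $u\le\mathcal{B}\le C\|f\|_{2,\alpha}$. Running the same argument with $-u$ in place of $u$ yields the first inequality in \eqref{eq:fbounded}.

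For the derivative bounds I would distinguish three regimes for $z_0\in\R^2$ with $R:=|z_0|$. When $R\le 2$ the coefficients of $\p^2_{rr}+\frac1r\p_r+(\frac1{r^2}+\e^2)\p^2_{ss}$ are bounded on $B_1(z_0)$ uniformly in $\e$, since in Cartesian form the lower-order operator $\e^2\p^2_{ss}$ has coefficients bounded by $C\e^2(|z|^2+|z|)$; combined with the $L^\infty$ bound just obtained, standard $C^{2,\alpha}$ Schauder gives $|\nabla u|+[D^2u]_\alpha\le C\|f\|_{2,\alpha}$. When $2\le R\le 2/\e$, set $v(w):=u(z_0+(R/4)w)$ on $B_1$; the rescaled operator has coefficients that remain bounded because $(\e R)^2\le 4$, the rescaled right-hand side has $C^\alpha$-norm controlled by $\|f\|_{2,\alpha}$ (using the definition of $\|\,\cdot\,\|_{2,\alpha}$ and the fact that $R\sim |z|$ on $B_{R/4}(z_0)$), and interior Schauder yields $R|\nabla u(z_0)|+R^{2+\alpha}[D^2u]_{\alpha,B_{R/8}(z_0)}\le C\|f\|_{2,\alpha}$. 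When $R\ge 1/\e$, I would use the anisotropic rescaling $\tilde u(\tilde r,s):=u(\tilde r/\e,s)$ which satisfies
$$
\p^2_{\tilde r\tilde r}\tilde u+\frac1{\tilde r}\p_{\tilde r}\tilde u+\Bigl(\frac1{\tilde r^2}+1\Bigr)\p^2_{ss}\tilde u=\e^{-2}f(\tilde r/\e,s),
$$
with the left-hand side a uniformly elliptic, $\e$-independent operator on $\{\tilde r\ge1\}$ and the right-hand side bounded by $\|f\|_{2,\alpha}\e^2/(\e^2+\tilde r^2)\le\|f\|_{2,\alpha}$. Interior Schauder on a unit ball around $\tilde r_0=\e R$ gives $|\p_{\tilde r}\tilde u|+|\p_s\tilde u|\le C\|f\|_{2,\alpha}$; translating back produces $\e^{-1}|\p_r u|+|\p_s u|\le C\|f\|_{2,\alpha}$, which supplies the second inequality in \eqref{eq:fbounded} and the first of \eqref{eq:decayonf2}. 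The $C^\alpha$ parts of the Schauder estimates in the three regimes combine to give $\|D^2u\|_{2,\alpha}\le C\|f\|_{2,\alpha}$.

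The main obstacle is the anisotropic character of $\Delta+\e^2\p^2_{ss}$ at large scales: the coefficient $\frac1{r^2}+\e^2$ in front of $\p^2_{ss}$ transitions from $\frac1{r^2}$-dominated to $\e^2$-dominated at the scale $r\sim 1/\e$, so no single self-similar rescaling yields bounds uniform in $\e$. Identifying the correct $\tilde r=\e r$ rescaling in the far region is what produces the asymmetry between the $\p_r u$ and $\p_s u$ decay rates recorded in \eqref{eq:decayonf2}; once the partition into regimes is set, everything else reduces to standard interior Schauder estimates.
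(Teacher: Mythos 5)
Your strategy — bound $u$ in $L^\infty$ by the angular barrier $s(\pi-s)$ in the upper half-plane via the comparison principle of Lemma~\ref{lem:comparison_principle}, then upgrade to the gradient and H\"older bounds by rescaled interior Schauder estimates in three regimes delimited by $|z|\sim 1$ and $|z|\sim 1/\e$ — is exactly the paper's (the proof of Lemma~\ref{lem:ellipticestimates} uses precisely $v=\|f\|_{2,\alpha}\,s(\pi-s)$ as the barrier and then invokes ``Schauder estimates and a standard scaling argument''). Your barrier computation, your remark on the removable singularity at the origin, and the near- and intermediate-scale Schauder steps are all correct; in the intermediate regime $2\le|z|\le 2/\e$ your scaling by $R/4$ indeed recovers $|\nabla u|\le C\|f\|_{2,\alpha}/(1+|z|)$.

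The gap is in the far region $|z|\gtrsim 1/\e$. There you pass to $\tilde u(\tilde r,s)=u(\tilde r/\e,s)$ and apply interior Schauder on a unit ball using only the $L^\infty$ bound $|\tilde u|\le C\|f\|_{2,\alpha}$. This produces the \emph{uniform} bounds $\e^{-1}|\p_r u|+|\p_s u|\le C\|f\|_{2,\alpha}$, which do not decay in $|z|$: from them one gets at best $|\nabla u|\le C\e\|f\|_{2,\alpha}$ and $|\e\,\p_s u|\le C\e\|f\|_{2,\alpha}$ uniformly for $|z|>1/\e$, whereas \eqref{eq:fbounded} and \eqref{eq:decayonf2} assert $|\nabla u|\le C\|f\|_{2,\alpha}/(1+|z|)$ for \emph{all} $z$ and $|\e\,\p_s u|\le C\|f\|_{2,\alpha}/|z|$ for $|z|\ge C/\e$, which continue to decay as $|z|$ increases past $1/\e$. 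Schauder alone cannot beat its $L^\infty$ input, so these decaying bounds require first showing that $\tilde u$ itself decays. The missing ingredient is a decaying barrier in the strip $\{\tilde r\ge 1\}\times(0,\pi)$: since $\tilde u$ vanishes on $s=0,\pi$, the rescaled right-hand side is bounded by $\|f\|_{2,\alpha}/\tilde r^2$, and the rescaled operator is
\[
\p^2_{\tilde r\tilde r}+\frac{1}{\tilde r}\p_{\tilde r}+\Bigl(1+\frac{1}{\tilde r^2}\Bigr)\p^2_{ss},
\]
a supersolution of the form $\mathcal B=C\|f\|_{2,\alpha}\,s(\pi-s)/\tilde r$ gives $|\tilde u|\lesssim \|f\|_{2,\alpha}\,s(\pi-s)/\tilde r$. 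Only after this does interior Schauder on unit balls yield $|\p_{\tilde r}\tilde u|+|\p_s\tilde u|\lesssim \|f\|_{2,\alpha}/\tilde r$, which translates back to $|\p_r u|\lesssim \|f\|_{2,\alpha}/|z|$ and $|\e\,\p_s u|\lesssim \|f\|_{2,\alpha}/|z|$, i.e.\ the stated rates. In short, the far region needs the same barrier-then-Schauder structure you already use at the origin, not Schauder on its own.
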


\begin{proof}
Thanks to the symmetry $u(\overline{z})=-u(z)$ it is sufficient to consider the problem
\begin{equation}\label{11}
\left\{
\begin{array}{rcll}
\Delta u  + \e^2 \p^2_{ss} u - f(z)   &= &0, \quad z \in \R\times \R^*_+ , \\
u(x_1,0)   & =& 0 \quad \foral x_1\in \R,
\end{array}
\right.
\end{equation}
which we can alternatively write as
\begin{equation*}
\left\{
\begin{array}{rcll}
\Delta u  + \e^2 \p^2_{ss} u - f  &= & 0, \quad  (r,s) \in (0,+\infty)\times (0,\pi) , \\
u(r,0)  =  u(r,\pi ) &=& 0.
\end{array}
\right.
\end{equation*}
Let us assume
$$ |f(z) | \le \frac 1 {1+|z|^2}. $$
We want to prove that for an absolute constant $C$ we have
$$|u(z)| \le C . $$
We define
\begin{equation*}
v(z)=v(r,s):=s(\pi-s).
\end{equation*}
We can check that
\begin{equation*}\begin{split}
\Delta v+\e^2\p^2_{ss}v+\frac{1}{1+r^2} =&\,\frac{-2}{r^2}-2 \e^2+\frac{1}{1+r^2} < 0 \quad \forall z=re^{is} \in \R \times \R^*_+. \nonumber
\end{split}\end{equation*}
Hence $v$ is a positive supersolution (and $-v$ a subsolution) for equation \eqref{11} in $(0,+\infty)\times(0,\pi)$ and in this set, for any bounded solution $u$ of \eqref{11} we have from Lemma \ref{lem:comparison_principle}
\begin{equation*}
|u(z)| \leq  |v(z)| \text{ in } \R \times \R^*_+.
\end{equation*}

\noindent The decay estimates in \eqref{eq:fbounded}-\eqref{eq:decayonf2} follow by Schauder estimates and a standard scaling argument.
\end{proof}

\begin{lemma}\label{lem:uniqueness}
\label{l1}
If $u$ is a bounded function that satisfies
\[
\Delta u  + \varepsilon^2 \partial_{ss}^2 u = 0
\quad \text{in }\R^2,\qquad
u(\bar z ) =- u(z) ,
\]
then $u\equiv 0$.
\end{lemma}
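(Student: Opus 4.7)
The key observation is that the antisymmetry $u(\bar z) = -u(z)$ evaluated at $z = x_1 \in \R$ forces $u(x_1,0) = -u(x_1,0)$, and hence $u \equiv 0$ on the real axis. Thus $u$ restricted to the upper half plane $\R \times \R^*_+$ is a bounded solution of $\Delta u + \varepsilon^2 \partial_{ss}^2 u = 0$ that vanishes on the boundary $\{x_2 = 0\}$.

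The plan is to invoke the comparison principle of Lemma~\ref{lem:comparison_principle} twice. Applied to $u$ itself, the hypotheses are satisfied: $\Delta u + \varepsilon^2 \partial_{ss}^2 u = 0 \geq 0$ in $\R \times \R^*_+$, $u$ is bounded, and $u \leq 0$ on $\R \times \{0\}$; the conclusion is $u \leq 0$ in the upper half plane. Applied next to $-u$, which satisfies exactly the same hypotheses, we obtain $-u \leq 0$, i.e.\ $u \geq 0$ in the upper half plane. Combining, $u \equiv 0$ on $\R \times \R^*_+$, and then the antisymmetry extends this to $u \equiv 0$ on all of $\R^2$.

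There is essentially no obstacle here: the lemma reduces entirely to the already-established Phragm\'en--Lindel\"of type maximum principle for the degenerate elliptic operator $\Delta + \varepsilon^2 \partial_{ss}^2$ in a half plane (Lemma~\ref{lem:comparison_principle}), which was proved via the positive barrier $g(r,s) = r^\beta \varphi(s)$ constructed from the first Dirichlet eigenfunction of $\partial_{ss}^2$ on a slightly enlarged angular sector. No further barrier construction, scaling argument, or Fourier decomposition is needed.
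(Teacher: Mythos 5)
Your proof is correct and takes a genuinely different route from the paper's. You observe that the antisymmetry $u(\bar z)=-u(z)$ forces $u=0$ on the real axis, so in the upper half-plane $u$ is a bounded solution with zero Dirichlet data; applying the already-proved comparison principle (Lemma~\ref{lem:comparison_principle}) first to $u$ and then to $-u$ yields $u\equiv 0$ there, and the symmetry extends this to all of $\R^2$. The paper does not invoke Lemma~\ref{lem:comparison_principle} here at all; instead it gives a self-contained compactness argument: it normalises $\sup u=1$, shows by the strong maximum principle that this supremum is not attained, and passes to a translated or rescaled limit along a maximising sequence (distinguishing $|z_n|\to\infty$ and $z_n\to 0$) to obtain a limit solution that contradicts the strong maximum principle. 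Your approach is shorter and reuses the barrier $r^\beta\varphi(s)$ already built inside the comparison principle, whereas the paper's argument is independent of that barrier. The one hypothesis of Lemma~\ref{lem:comparison_principle} you invoke implicitly, and which is worth a sentence, is the regularity $u\in C^2(\R\times\R^*_+)\cap C^0(\overline{\R\times\R^*_+})$: it holds here because $\Delta+\varepsilon^2\partial^2_{ss}$ is a locally uniformly elliptic operator with smooth coefficients (the principal symbol has eigenvalues $1$ and $1+\varepsilon^2|z|^2$), so interior elliptic regularity gives $u\in C^\infty(\R^2)$.
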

\begin{proof}
Suppose $u\not\equiv 0$ and assume without loss of generality that $\sup_{\R^2} u=1$.
By the strong maximum principle the supremum cannot be attained in $\R^2 \setminus \{  0 \}$.
Let $z_n\in \R^2$ be a sequence such that $u(z_n)\to 1$.
Up to a subsequence we have two possibilities: $z_n\to0$ or $|z_n|\to \infty$.

Case $z_n \to \infty$.
Let us write
\[
z_n = R_n e^{i\s_n},
\]
where $R_n\to \infty$ and $\s_n \in (0,\pi)$.
We express $u$ in polar coordinates $(r,s)$ and define
\[
\tilde u_n(r,s) := u(r+R_n,s).
\]
Up to a subsequence we have $\tilde u_n \to \tilde u$ uniformly in compact sets of $\R^2$, where $\tilde u\leq 1$, $\tilde u (p)=1$ for some point $p=(1,s)$ with $s \in [0,\pi]$, and
\[
\partial_{rr}^2 \tilde u + \varepsilon^2 \partial_{ss}^2 \tilde u = 0 \quad \text{in }\R^2 ,
\]
with the additional condition $\tilde u(r,0)=\tilde u (r,\pi) = 0$.
This contradicts the strong maximum principle.

Case $z_n \to 0$.
Let us write
\[
z_n = R_n e^{i\s_n},
\]
where $R_n\to 0$ and $\s_n \in (0,\pi)$.
Define
\[
\tilde u_n(\zeta) := u(R_n \zeta) .
\]
Up to a subsequence $\tilde u_n\to \tilde u$ uniformly in compact sets of $\R^2$, where $\tilde u\leq 1$ attains its maximum at some point and satisfies $\Delta \tilde u=0$ in $\R^2$.
This is a contradiction.
\end{proof}

\begin{proof}[Proof of proposition \ref{prop:est_model_psi_1}]
We use $v:=\|f\|_{2,\alpha}s(\pi-s)$ as a super-solution  to solve the problem in large half-balls centred at the origin. More precisely, for any $M>0$ there exists a solution of
\begin{equation*}
\left\{
\begin{array}{rcll}
\Delta u_M+\e^2 \p^2_{ss}u_M&=& f & \text{ in } B^+_M(0) \\
u_M&=&0 & \text{ on } \p B^+_M(0),
\end{array}
\right.
\end{equation*}
where $B^+_M(0):=\{(x_1,x_2) \in \R \times \R^+; |z|<M \}$. Thanks to gradient estimates \eqref{eq:fbounded} we have
\begin{equation*}
|\nabla u_M| \leq \frac{C |v|}{1+|z|} \text{ in } B^+_M(0) ,
\end{equation*}
for some $C>0$ independent of $M$ and thus we can apply Arzela-Ascoli theorem to take the limit of $u_M$ along a suitable subsequence, obtaining a solution of \eqref{11}. The uniqueness is proved in Lemma \ref{lem:uniqueness} and the estimates follow from Lemma \ref{lem:ellipticestimates}.
\end{proof}
Proposition \ref{prop:est_model_psi_1} is a model for the treatment of \(\psi_1\) the real part of \(\psi\) in Lemma \ref{FirstEstimate}. To deal with \(\psi_2\) we have to use an analogous proposition:

\begin{proposition}\label{prop:Neumann_comparison}
Let \(g:\R^2\rightarrow \R\) be such that \(g(\bar{z})=g(z)\) and \(\|g\|_{1,\alpha}<+\infty\) Then there exists a unique bounded \(v:\R^2 \rightarrow\R\)  such that $v(\bar{z})=v(z)$ and 
\begin{equation*}
\Delta v+\e^2 \p^2_{ss}v-v=g.
\end{equation*}
Furthermore  there exists a constant $C>0$ such that
\begin{equation*}
(1+|z|)\left(|v(z)|+|\nabla v(z)|\right)\leq C \|g\|_{1,\alpha},   \quad \|D^2v\|_{1,\alpha}\leq C\|g\|_{1,\alpha},
\end{equation*}
\begin{equation*}
|
\e |z|\p_s v(z)|\leq C  \|g\|_{1,\alpha} \text{ for} \ |z|>1/\e.
\end{equation*}
\end{proposition}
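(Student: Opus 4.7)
The plan is to mirror the proof of Proposition~\ref{prop:est_model_psi_1} (approximation on bounded domains, a radial barrier giving uniform decay, passage to the limit, elliptic regularity), with two essential adaptations: the even symmetry $v(\bar{z})=v(z)$ replaces the antisymmetry, and the extra zeroth-order term $-v$ turns the operator into a Helmholtz-type one whose coercivity allows us to close the decay estimate $|v|\lesssim (1+|z|)^{-1}$ against the slower-decaying right-hand side $|g|\le\|g\|_{1,\alpha}/(1+|z|)$. Uniqueness is immediate from Lemma~\ref{lem:comparison_principle_Neumann} applied to $\pm v$ on $\R\times\R^+$: the even symmetry gives $\partial_{x_2}v|_{x_2=0}=0$, so both $v\le0$ and $-v\le 0$.

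For existence, I would first apply Lax-Milgram to the coercive bilinear form $a(v,w)=\int_{B_M}(\nabla v\cdot\nabla w+\e^2\p_s v\,\p_s w+vw)$ on $H^1_0(B_M)$ to obtain a Dirichlet solution $v_M$; the evenness of $g$ combined with uniqueness forces $v_M$ to be even. To obtain uniform-in-$M$ decay, I would use the radial barrier $\mathcal{B}(z):=K\|g\|_{1,\alpha}(1+|z|^2)^{-1/2}$. Because $\mathcal{B}$ is radial, $\p^2_{ss}\mathcal{B}\equiv 0$, and a direct computation gives
\[
\Delta\mathcal{B}-\mathcal{B}\,=\,-K\|g\|_{1,\alpha}\,\frac{r^4+r^2+3}{(1+r^2)^{5/2}}\,\le\,-\frac{K\,c\,\|g\|_{1,\alpha}}{1+|z|}
\]
for an absolute constant $c>0$. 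Choosing $K$ sufficiently large and applying the standard maximum principle on $B_M$ (valid since the zeroth-order coefficient is $-1<0$ and $\p^2_{ss}v\le 0$ at any interior critical maximum, by the identity $\p^2_{ss}v=(x_2,-x_1)D^2v(x_2,-x_1)^T$ at a point where $\nabla v=0$) yields $|v_M|\le\mathcal{B}$ in $B_M$. Arzel\`a-Ascoli together with interior Schauder estimates then extracts a $C^2_{\mathrm{loc}}$-convergent subsequence whose limit $v$ solves the full-plane equation with $(1+|z|)|v(z)|\le C\|g\|_{1,\alpha}$ and the stated symmetry.

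The pointwise bound on $\nabla v$ and the H\"older bound on $D^2v$ follow from Schauder estimates on unit balls around each point together with the already-established decay of $v$, while the $\e|z|\p_s v$ bound in $\{|z|>1/\e\}$ is obtained via the rescaling $\tilde v(\tilde r,s):=v(\e^{-1}\tilde r,s)$ exactly as at the end of the proof of Proposition~\ref{prop:est_model_psi_1}. The main obstacle is verifying the barrier inequality uniformly in $|z|$, including the transition region $|z|\sim 1$: the decisive point is to take $\mathcal{B}$ radial so that the degenerate term $\e^2\p^2_{ss}\mathcal{B}$ vanishes identically, reducing the barrier check to the standard Helmholtz inequality $(\Delta-1)\mathcal{B}\le -|g|$; once this is in place, everything else parallels techniques already developed for $\psi_2$ in Lemma~\ref{FirstEstimate} and Proposition~\ref{prop:est_model_psi_1}.
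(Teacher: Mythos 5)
Your proposal is correct and follows the same route the paper sketches in two sentences: approximation by solutions on large bounded domains, a barrier giving uniform $(1+|z|)^{-1}$ decay, passage to the limit, and rescaled Schauder estimates for the gradient and the $\e|z|\p_s v$ bound in $\{|z|>1/\e\}$. The one genuine addition worth noting is your choice of the \emph{radial} barrier $\mathcal{B}=K\|g\|_{1,\alpha}(1+|z|^2)^{-1/2}$, which makes $\e^2\p^2_{ss}\mathcal{B}\equiv 0$ and thus reduces the supersolution check to the purely $\e$-independent Helmholtz inequality $(\Delta-1)\mathcal{B}\le -K c\|g\|_{1,\alpha}(1+|z|)^{-1}$ — a clean way to implement the barrier argument the paper only alludes to.
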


\begin{proof}
The symmetry assumption allows us to work in the half-plane $\R \times \R^*_+$ with homogeneous Neumann condition on the boundary. We can then apply a barrier argument and rescaled Schauder estimates to prove the proposition.
\end{proof}

In the course of the linear theory for our problem, when we separate even and odd modes we need an analogue of the following lemma.
\hide {\crr
\begin{lemma}\label{lem:estimates_3}
Let $f:\R^2\rightarrow \R$ be such that $f(\bar{z})=-f(z)$ and $\|f\|_{2,\alpha}\leq C_1$. Let $u$ be the unique bounded solution of
\begin{equation*}
\Delta u +\e^2\p^2_{ss}u =f \text{ in } \R^2
\end{equation*}
with $u(\bar{z})=-u(z)$.
Assume furthermore that $\|f\|_{1,\alpha}\leq C_2$ with $C_2=\e C_1$. Then there exists $C>0$ independent of $f,u,\e$ such that
\begin{equation*}
|u(z)|\leq C\|f\|_{1,\alpha}(1+|z| |\log \e|),
\quad
|\nabla u(z)| \leq C |\log \e| \|f\|_{1,\alpha,}
\end{equation*}
\begin{equation*}
\|D^2 u\|_{1,\alpha} \leq C |\log \e| \|f\|_{1,\alpha},
\quad
\e|\p_su(z)|\leq C |\log \e| \|f\|_{1,\alpha} \text{ for } |z| >\frac{1}{\e}.
\end{equation*}
\end{lemma}

\begin{proof}
We note that, from Lemma \ref{lem:ellipticestimates} we know $|u(z)|\lesssim C_1$. We use a Fourier series decomposition. Thanks to the symmetry assumption we can write
$$f(r,s)=\sum_{k\geq 1} f_k(r) \sin ks, \ \ \ u(r,s)=\sum_{k \geq 1} u_k(r) \sin ks. $$
The equations on the Fourier coefficients are

$$u_k''+\frac{1}{r}u'_k-k^2\left(\frac{1}{r^2}+\e^2\right) u_k =f_k \text{ in } \R^+.$$
For $k=1$ we define
\begin{equation*}
\B_1(r):= \begin{cases}
(1-r\log(\e r)+r) \text{ if } r\leq \frac{1}{\e},\\
\frac{\e^{-2}}{1+r}+\frac{\e+2}{\e+1}\text{ if } r>\frac{1}{\e}.
\end{cases}
\end{equation*}
The function $\B_1$ is continuous and when $r< 1/\e$ we have
\begin{equation*}
\B_1''+\frac{1}{r}\B_1'-\left(\frac{1}{r^2}+\e^2\right)\B_1=-\frac{1}{r}-\e^2\left[(1-r\log(\e r)+r)\right]<-\frac{1}{r}.
\end{equation*}
Thus we can use $\B_1$ as a barrier for $u_1$  in the region $r < \frac{1}{\e}$.

In the region $r> 1/\e$:
\begin{equation*}\begin{split}
\B_1''+\frac{1}{r}\B_1'-\left(\frac{1}{r^2}+\e^2\right)\B_1=\,& \e^{-2} \left[ \frac{1}{(1+r)^3}-\frac{1}{(1+r)^2} \right]-\frac{\e^{2}}{(1+r)r^2}-\frac{1}{(1+r)}<-\frac{1}{1+r}.
\end{split}\end{equation*}
Thus we can also construct a good barrier for $u$ with the help of $\B_1$, and by the comparison principle in Lemma \ref{lem:comparison_principle} we find
$$|u_1(r)| \leq C \|f\|_{1,\alpha} \B_1(r,s) \text{ in } \R^+.$$
Now, since
$$(1-r\log(r \e)+r) \leq (1+\frac{1}{e}-r\log\e +r),$$
we deduce that
$$|u_1(r)|\leq C \|f\|_{1,\alpha}(1+r|\log \e|) \text{ in } \R^+.$$
The other Fourier coefficients are easier to estimate since we can use the barriers $\B_k(r):=\frac{1}{2k^2}(1+r)$ in those cases. Thus we obtain
$$|u(r,s)|\leq \sum_{k\geq 1} |f_k(r)| \leq C\|f\|_{1,\alpha}(1+r(|\log\e|+1)).$$

Now the estimates on the gradient and on the second derivatives are obtained using rescaled Schauder estimates as in Lemma \ref{lem:ellipticestimates}.
\end{proof}
}

Let us consider $R_0>0$ fixed and $R_0<R_\varepsilon<\varepsilon^{-1}$, and let $\Omega$, $\Omega'$ be the regions 
\begin{align*}
\Omega &= \{ z \in \R^2 \, | \, R_0<|z|<R_\varepsilon \}
\\
\Omega' &= \Bigl\{ z \in \R^2 \, | \, 2R_0<|z|<\frac{1}{2} R_\varepsilon \Bigr\},
\end{align*}
and recall the polar coordinates notation $z = r e^{i s}$, $r>0$, $s\in\R$.

\begin{lemma}\label{lem:estimates-3b}
Let $f:\R^2\rightarrow \R$ be such that $f(\bar{z})=-f(z)$ and $|f(z)| \leq \frac{1}{|z|}$. Let $u$ be a solution of
\begin{align*}
\Delta u +\e^2\p^2_{ss}u =f \text{ in } \Omega
\end{align*}
such that  $u(\bar{z})=-u(z)$ and 
\begin{align*}
|u(z)| &\leq R_0 |\log \varepsilon| , \quad |z|= R_0
\\
|u(z)| &\leq R_\varepsilon, \quad |z|= R_\varepsilon .
\end{align*}
Then there is $C$ such that 
\begin{equation*}
| u(z)| \leq C |z| \log\Bigl(\frac{2R_\varepsilon}{|z|}\Bigl) ,
\quad
\forall z\in \Omega'.
\end{equation*}
\end{lemma}
\begin{proof}

We use a Fourier series decomposition, which
thanks to the symmetries we can take of the form
\[
f(r,s)=\sum_{k\geq 1} f_k(r) \sin (ks), \ \ \ u(r,s)=\sum_{k \geq 1} u_k(r) \sin (ks) .
\]
The equations on the Fourier coefficients are 
\[
u_k''+\frac{1}{r}u'_k-k^2\left(\frac{1}{r^2}+\e^2\right) u_k =f_k \quad \text{ in } (R_0,R_\varepsilon).
\]
We estimate each $u_k$ using barriers.
For $k=1$ we define
\begin{equation*}
\bar u_1(r):= 
 r \log\Bigl(\frac{3R_\varepsilon}{r}\Bigr)
\end{equation*}
The function $\bar u_1$ satisfies
\begin{equation*}
\bar u_1''+\frac{1}{r}\bar u_1'-\left(\frac{1}{r^2}+\e^2\right)\bar u_1
<-\frac{1}{r}, \quad \text{for } r < R_\varepsilon.
\end{equation*}
Thus we can use $\bar u_1$ as a barrier for $u_1$  in the interval $ (R_0,R_\varepsilon)  $ and deduce that
\begin{align}
\label{mode1}
|u_1(r)| \leq  r \log\Bigl(\frac{3R_\varepsilon}{r}\Bigr),
\quad r\in (R_0,R_\varepsilon).
\end{align}

For $k\geq 2$  we use the barrier
\[
\bar u_k(r) = C\Bigl( \frac{r}{k^2} + C |\log \varepsilon| \Bigl( \frac{r}{R_0}\Bigr)^{-k}
+ R_\varepsilon  \Bigl( \frac{r}{R_\varepsilon}\Bigr)^{k} \Bigr)
\]
where $C$ is a large fixed constant (the last two terms in $\bar u_k$ solve almost the homogeneous equation and are there for the boundary conditions).
By the maximum principle $|u_k| \leq \bar u_k$ in $(R_0,R_\varepsilon)$ and for $r\in ( 2R_0,\frac{1}{2}R_\varepsilon)$ we get
\[
\sum_{k=2}^\infty \bar u_k(r)  \leq C \frac{|\log\varepsilon|}{r}  + C r
\leq C r \log\Bigl( \frac{2R_\varepsilon}{r} \Bigr).
\]
This and \eqref{mode1} imply the desired conclusion.
\end{proof}

\bibliographystyle{abbrv}
\bibliography{biblio}

\begin{thebibliography}{10}

\bibitem{agudelo}
O.~Agudelo, M.~del Pino, and J.~Wei.
\newblock Solutions with multiple catenoidal ends to the {A}llen-{C}ahn
  equation in {$\Bbb{R}^3$}.
\newblock {\em J. Math. Pures Appl. (9)}, 103(1):142--218, 2015.

\bibitem{Alberti-Baldo-Orlandi2005}
G.~Alberti, S.~Baldo, and G.~Orlandi.
\newblock Variational convergence for functionals of {G}inzburg-{L}andau type.
\newblock {\em Indiana Univ. Math. J.}, 54(5):1411--1472, 2005.

\bibitem{almeida}
L.~Almeida and F.~Bethuel.
\newblock Topological methods for the {G}inzburg-{L}andau equations.
\newblock {\em J. Math. Pures Appl. (9)}, 77(1):1--49, 1998.

\bibitem{AmbrosioCabre2000}
L.~Ambrosio and X.~Cabr\'e.
\newblock Entire solutions of semilinear elliptic equations in {$\bold R^3$}
  and a conjecture of {D}e {G}iorgi.
\newblock {\em J. Amer. Math. Soc.}, 13(4):725--739, 2000.

\bibitem{BarlowBssGui2000}
M.~T. Barlow, R.~F. Bass, and C.~Gui.
\newblock The {L}iouville property and a conjecture of {D}e {G}iorgi.
\newblock {\em Comm. Pure Appl. Math.}, 53(8):1007--1038, 2000.

\bibitem{berestycki1997monotonicity}
H.~Berestycki, L.~A. Caffarelli, and L.~Nirenberg.
\newblock Monotonicity for elliptic equations in unbounded {L}ipschitz domains.
\newblock {\em Comm. Pure Appl. Math.}, 50(11):1089--1111, 1997.

\bibitem{BerestyckiHamelMonneau2000}
H.~Berestycki, F.~Hamel, and R.~Monneau.
\newblock One-dimensional symmetry of bounded entire solutions of some elliptic
  equations.
\newblock {\em Duke Math. J.}, 103(3):375--396, 2000.

\bibitem{BethuelBrezisHelein1994}
F.~Bethuel, H.~Brezis, and F.~H\'elein.
\newblock {\em Ginzburg-{L}andau vortices}.
\newblock Modern Birkh\"auser Classics. Birkh\"auser/Springer, Cham, 2017.
\newblock Reprint of the 1994 edition.

\bibitem{BethuelBrezisOrlandi}
F.~Bethuel, H.~Brezis, and G.~Orlandi.
\newblock Asymptotics for the {G}inzburg-{L}andau equation in arbitrary
  dimensions.
\newblock {\em J. Funct. Anal.}, 186(2):432--520, 2001.

\bibitem{ChenElliottQi1994}
X.~Chen, C.~M. Elliott, and T.~Qi.
\newblock Shooting method for vortex solutions of a complex-valued
  {G}inzburg-{L}andau equation.
\newblock {\em Proc. Roy. Soc. Edinburgh Sect. A}, 124(6):1075--1088, 1994.

\bibitem{Chiron_2005}
D.~Chiron.
\newblock Vortex helices for the gross--pitaevskii equation.
\newblock {\em Journal de math{\'e}matiques pures et appliqu{\'e}es},
  84(11):1555--1647, 2005.

\bibitem{CintiDaviladelPino2016}
E.~Cinti, J.~Davila, and M.~Del~Pino.
\newblock Solutions of the fractional {A}llen-{C}ahn equation which are
  invariant under screw motion.
\newblock {\em J. Lond. Math. Soc. (2)}, 94(1):295--313, 2016.

\bibitem{ContrerasJerrard2017}
A.~Contreras and R.~L. Jerrard.
\newblock Nearly parallel vortex filaments in the 3{D} {G}inzburg-{L}andau
  equations.
\newblock {\em Geom. Funct. Anal.}, 27(5):1161--1230, 2017.

\bibitem{delPinoFelmerKowalczyk2004}
M.~del Pino, P.~Felmer, and M.~Kowalczyk.
\newblock Minimality and nondegeneracy of degree-one {G}inzburg-{L}andau vortex
  as a {H}ardy's type inequality.
\newblock {\em Int. Math. Res. Not.}, (30):1511--1527, 2004.

\bibitem{delPinoFelmer1997}
M.~del Pino and P.~L. Felmer.
\newblock Local minimizers for the {G}inzburg-{L}andau energy.
\newblock {\em Math. Z.}, 225(4):671--684, 1997.

\bibitem{delPinoKowalczyk2008}
M.~Del~Pino and M.~Kowalczyk.
\newblock Renormalized energy of interacting {G}inzburg-{L}andau vortex
  filaments.
\newblock {\em J. Lond. Math. Soc. (2)}, 77(3):647--665, 2008.

\bibitem{delPinoKowalczykMusso2006}
M.~del Pino, M.~Kowalczyk, and M.~Musso.
\newblock Variational reduction for {G}inzburg-{L}andau vortices.
\newblock {\em J. Funct. Anal.}, 239(2):497--541, 2006.

\bibitem{delPinoKowalczykPacardWei2010}
M.~del Pino, M.~Kowalczyk, F.~Pacard, and J.~Wei.
\newblock The {T}oda system and multiple-end solutions of autonomous planar
  elliptic problems.
\newblock {\em Adv. Math.}, 224(4):1462--1516, 2010.

\bibitem{delPinoKowalczykWei2008}
M.~del Pino, M.~Kowalczyk, and J.~Wei.
\newblock The {T}oda system and clustering interfaces in the {A}llen-{C}ahn
  equation.
\newblock {\em Arch. Ration. Mech. Anal.}, 190(1):141--187, 2008.

\bibitem{delPinoKowalczykWei2011}
M.~del Pino, M.~Kowalczyk, and J.~Wei.
\newblock On {D}e {G}iorgi's conjecture in dimension {$N\geq 9$}.
\newblock {\em Ann. of Math. (2)}, 174(3):1485--1569, 2011.

\bibitem{delPinoKowalczykWeiYang2010}
M.~del Pino, M.~Kowalczyk, J.~Wei, and J.~Yang.
\newblock Interface foliation near minimal submanifolds in {R}iemannian
  manifolds with positive {R}icci curvature.
\newblock {\em Geom. Funct. Anal.}, 20(4):918--957, 2010.

\bibitem{delPinoMussoPacard2012}
M.~del Pino, M.~Musso, and F.~Pacard.
\newblock Solutions of the {A}llen-{C}ahn equation which are invariant under
  screw-motion.
\newblock {\em Manuscripta Math.}, 138(3-4):273--286, 2012.

\bibitem{Farina1999}
A.~Farina.
\newblock Symmetry for solutions of semilinear elliptic equations in {${\bf
  R}^N$} and related conjectures.
\newblock {\em Atti Accad. Naz. Lincei Cl. Sci. Fis. Mat. Natur. Rend. Lincei
  (9) Mat. Appl.}, 10(4):255--265, 1999.

\bibitem{GhoussoubGui1998}
N.~Ghoussoub and C.~Gui.
\newblock On a conjecture of {D}e {G}iorgi and some related problems.
\newblock {\em Math. Ann.}, 311(3):481--491, 1998.

\bibitem{HerveHerve1994}
R.-M. Herv\'e and M.~Herv\'e.
\newblock \'etude qualitative des solutions r\'eelles d'une \'equation
  diff\'erentielle li\'ee \`a l'\'equation de {G}inzburg-{L}andau.
\newblock {\em Ann. Inst. H. Poincar\'e Anal. Non Lin\'eaire}, 11(4):427--440,
  1994.

\bibitem{jerrardsoner}
R.~L. Jerrard and H.~M. Soner.
\newblock The {J}acobian and the {G}inzburg-{L}andau energy.
\newblock {\em Calc. Var. Partial Differential Equations}, 14(2):151--191,
  2002.

\bibitem{Kenig_Ponce_Vega2003}
C.~E. Kenig, G.~Ponce, and L.~Vega.
\newblock On the interaction of nearly parallel vortex filaments.
\newblock {\em Comm. Math. Phys.}, 243(3):471--483, 2003.

\bibitem{Klein_Majda_Damodaran1995}
R.~Klein, A.~J. Majda, and K.~Damodaran.
\newblock Simplified equations for the interaction of nearly parallel vortex
  filaments.
\newblock {\em J. Fluid Mech.}, 288:201--248, 1995.

\bibitem{LinRiviere1999}
F.~Lin and T.~Rivi\`ere.
\newblock Complex {G}inzburg-{L}andau equations in high dimensions and
  codimension two area minimizing currents.
\newblock {\em J. Eur. Math. Soc. (JEMS)}, 1(3):237--311, 1999.

\bibitem{Lin1995}
F.-H. Lin.
\newblock Solutions of {G}inzburg-{L}andau equations and critical points of the
  renormalized energy.
\newblock {\em Ann. Inst. H. Poincar\'{e} Anal. Non Lin\'{e}aire},
  12(5):599--622, 1995.

\bibitem{LinRiviere2001}
F.-H. Lin and T.~Rivi\`ere.
\newblock A quantization property for static {G}inzburg-{L}andau vortices.
\newblock {\em Comm. Pure Appl. Math.}, 54(2):206--228, 2001.

\bibitem{Mironescu1996}
P.~Mironescu.
\newblock Les minimiseurs locaux pour l'\'equation de {G}inzburg-{L}andau sont
  \`a sym\'etrie radiale.
\newblock {\em C. R. Acad. Sci. Paris S\'er. I Math.}, 323(6):593--598, 1996.

\bibitem{Montero_Sternberg_Ziemer2004}
J.~A. Montero, P.~Sternberg, and W.~P. Ziemer.
\newblock Local minimizers with vortices in the {G}inzburg-{L}andau system in
  three dimensions.
\newblock {\em Comm. Pure Appl. Math.}, 57(1):99--125, 2004.

\bibitem{PacardRiviere2000}
F.~Pacard and T.~Rivi\`ere.
\newblock {\em Linear and nonlinear aspects of vortices}, volume~39 of {\em
  Progress in Nonlinear Differential Equations and their Applications}.
\newblock Birkh\"auser Boston, Inc., Boston, MA, 2000.
\newblock The Ginzburg-Landau model.

\bibitem{Riviere1996}
T.~Rivi\`ere.
\newblock Line vortices in the {${\rm U}(1)$}-{H}iggs model.
\newblock {\em ESAIM Contr\^ole Optim. Calc. Var.}, 1:77--167, 1995/96.

\bibitem{Sandier1998}
E.~Sandier.
\newblock Locally minimising solutions of {$-\Delta u=u(1-|u|^2)$} in {${\bf
  R}^2$}.
\newblock {\em Proc. Roy. Soc. Edinburgh Sect. A}, 128(2):349--358, 1998.

\bibitem{Sandier2001}
E.~Sandier.
\newblock Ginzburg-{L}andau minimizers from {$\Bbb R^{n+1}$} to {$\Bbb R^n$}
  and minimal connections.
\newblock {\em Indiana Univ. Math. J.}, 50(4):1807--1844, 2001.

\bibitem{Sandier_Serfaty2007}
E.~Sandier and S.~Serfaty.
\newblock {\em Vortices in the magnetic {G}inzburg-{L}andau model}, volume~70
  of {\em Progress in Nonlinear Differential Equations and their Applications}.
\newblock Birkh\"{a}user Boston, Inc., Boston, MA, 2007.

\bibitem{SandierShafrir2017}
E.~Sandier and I.~Shafrir.
\newblock Small energy {G}inzburg-{L}andau minimizers in {$\Bbb{R}^3$}.
\newblock {\em J. Funct. Anal.}, 272(9):3946--3964, 2017.

\bibitem{Savin2009}
O.~Savin.
\newblock Regularity of flat level sets in phase transitions.
\newblock {\em Ann. of Math. (2)}, 169(1):41--78, 2009.

\bibitem{Shafrir1994}
I.~Shafrir.
\newblock Remarks on solutions of {$-\Delta u=(1-|u|^2)u$} in {${\bf R}^2$}.
\newblock {\em C. R. Acad. Sci. Paris S\'er. I Math.}, 318(4):327--331, 1994.

\bibitem{WeiJun2016}
J.~Wei and J.~Yang.
\newblock Traveling vortex helices for {S}chr\"odinger map equations.
\newblock {\em Trans. Amer. Math. Soc.}, 368(4):2589--2622, 2016.

\end{thebibliography}

\end{document}